\numberwithin{equation}{section} 
\numberwithin{figure}{section} 
\newtheorem{thm}{Theorem}
\newtheorem{thm}{Theorem}[section]
 \theoremstyle{definition}
 \newtheorem{defi}[thm]{Definition}
 \newtheorem{exple}[thm]{Example}
 \theoremstyle{plain}
 \newtheorem{prop}[thm]{Proposition}
 \newtheorem{cor}[thm]{Corollary}
 \newtheorem*{thmnonnum}{Theorem}
\newcommand{\hs}{\mathcal{H_{\mathcal{S}}}}
\newcommand{\hsprime}{\mathcal{H}'_{\mathcal{S}}}
\newcommand{\hsp}{\mathcal{H}^r_{\mathcal{S}}}
\newcommand{\hsa}{\mathcal{H}^e_{\mathcal{S}}}
\newcommand{\hspa}{\mathcal{H}^{re}_{\mathcal{S}}}
\newcommand{\hsc}{\mathcal{H}^h_{\mathcal{S}}}
\newcommand{\hsb}{\mathcal{H}_{\mathcal{S}_e,\mathcal{S}_v}}
\newcommand{\hsbp}{\mathcal{H}^{Ar}_{\mathcal{S}_e,\mathcal{S}_v}}
\newcommand{\hsbtp}{\mathcal{H}^{Br}_{\mathcal{S}_e,\mathcal{S}_v}}
\newcommand{\hsba}{\mathcal{H}^e_{\mathcal{S}_e,\mathcal{S}_v}}
\newcommand{\hsbpa}{\mathcal{H}^{re}_{\mathcal{S}_e,\mathcal{S}_v}}
\newcommand{\hsbc}{\mathcal{H}^h_{\mathcal{S}_e,\mathcal{S}_v}}
\newcommand{\h}{\mathcal{H}}
\newcommand{\hp}{\mathcal{H}^r}
\newcommand{\hpa}{\mathcal{H}^{re}}
\newcommand{\ha}{\mathcal{H}^e}
\newcommand{\hc}{\mathcal{H}^h}
\newcommand{\comm}{\operatorname{Comm}}
\newcommand{\perm}{\operatorname{Perm}}
\newcommand{\lie}{\operatorname{Lie}}
\newcommand{\prelie}{\operatorname{PreLie}}
\newcommand{\pasc}{\operatorname{Pasc}}
\newcommand{\assoc}{\operatorname{Assoc}}
\newcommand{\cycle}{\operatorname{Cycle}}
\newcommand{\hsy}{\mathcal{H}_{\comm -X, \cycle}}
\newcommand{\hsyp}{\mathcal{H}^{Ar}_{\comm -X,\cycle}}
\newcommand{\hsytp}{\mathcal{H}^{Br}_{\comm -X,\cycle}}
\newcommand{\hsya}{\mathcal{H}^e_{\comm -X,\cycle}}
\newcommand{\hsypa}{\mathcal{H}^{re}_{\comm -X,\cycle}}
\newcommand{\hsyc}{\mathcal{H}^h_{\comm -X,\cycle}}
\newcommand{\hsz}{\mathcal{H}_{\comm -X, \Sigma \lie}}
\newcommand{\hszp}{\mathcal{H}^{Ar}_{\comm -X, \Sigma \lie}}
\newcommand{\hsztp}{\mathcal{H}^{Br}_{\comm -X, \Sigma \lie}}
\newcommand{\hsza}{\mathcal{H}^e_{\comm -X, \Sigma \lie}}
\newcommand{\hszpa}{\mathcal{H}^{re}_{\comm -X, \Sigma \lie}}
\newcommand{\Ss}{S_{\mathcal{S}}}
\newcommand{\Sp}{S^r_{\mathcal{S}}}
\newcommand{\Sa}{S^e_{\mathcal{S}}}
\newcommand{\Spa}{S^{re}_{\mathcal{S}}}
\newcommand{\Sc}{S^h_{\mathcal{S}}}
\newcommand{\Ssw}{S_{\mathcal{S},W}}
\newcommand{\Spw}{S^r_{\mathcal{S},W}}
\newcommand{\Saw}{S^e_{\mathcal{S},W}}
\newcommand{\Spaw}{S^{re}_{\mathcal{S},W}}
\newcommand{\Scw}{S^h_{\mathcal{S},W}}
\title{Decorated hypertrees}
\author{Bérénice Oger}
\thanks{ Institut Camille Jordan, UMR 5208, Université Claude Bernard Lyon 1 \\
 Bât. Jean Braconnier n°101, 43 Bd du 11 novembre 1918, 69622 Villeurbanne Cedex \\
 \textbf{e-mail address:} oger@math.univ-lyon1.fr}
\begin{document}

\begin{abstract}
C. Jensen, J. McCammond and J. Meier have used weighted hypertrees to compute the Euler characteristic of a subgroup of the automorphism group of a free product. Weighted hypertrees also appear in the study of the homology of the hypertree poset. We link them to decorated hypertrees after a general study on decorated hypertrees, which we enumerate using box trees.
\end{abstract}

\maketitle

\textbf{Keywords: } Enumerative combinatorics; Species; Hypertrees; Symmetric group action.

\tableofcontents

\section*{Introduction}

Hypergraphs are generalizations of graphs introduced by C. Berge in his book \cite{Berge} during the 1980's. Like graphs, they are defined by their vertices and edges, but the edges can contain more than two vertices. Hypertrees are hypergraphs in which there is one and only one walk between every pair of vertices. For a finite set $I$, we can endow the set of hypertrees on the vertex set $I$ with a structure of poset: given two hypertrees $H$ and $K$, $H \preceq K$ if each edge in $K$ is a subset of some edge in $H$. Recently, the hypertree poset has been used by C. Jensen, J. McCammond and J. Meier in their articles \cite{McCM}, \cite{JMcCM} and \cite{JMcCM2} for the study of a subgroup of the automorphism group of the free groups. The characteristic polynomial of the poset has been computed by F. Chapoton in the article \cite{ChHyp} and we have studied, in the article \cite{mar1}, the character of the action of the symmetric group on the homology and the Whitney homology of the hypertree poset. In another direction, K. Ebrahimi-Fard and D. Manchon explained in their article \cite{EFM} how hypertrees are organized in a combinatorial Hopf algebra which generalize the Connes-Kreimer Hopf algebra of trees. 

In this article, we consider hypertrees endowed with additional structures on edges, around vertices or with both. A typical example would be hypertrees with a cyclic order on the elements of every edge, and with a total order on the set of edges containing a given vertex, for every vertex. When the additional structure is on edges, the hypertrees are called \emph{decorated hypertrees}. When both kinds of additional structures are present, the hypertrees are called \emph{bi-decorated hypertrees}. Decorated hypertrees appear in the article \cite{JMcCM} of C. Jensen, J. McCammond and J. Meier for the computation of the Euler characteristic of a subgroup of the automorphism group of a free product. In his article \cite{ChHyp}, F. Chapoton has introduced a kind of bi-decorated hypertrees to compute the characteristic polynomial of the hypertree poset. Moreover, in the article \cite{mar1}, the character of the action of the symmetric group on the Whitney homology of the hypertree poset is related to another kind of bi-decorated hypertrees. Motivated by these examples, this present article aims at studying decorated hypertrees in general and showing that several known objects, such as the ones appearing in articles \cite{JMcCM}, \cite{mar1}, \cite{ChHyp} are indeed decorated hypertrees. 

For the general study of decorated hypertrees, we use the notion of combinatorial species and a trick often used for the study of trees: we root the hypertrees. Indeed, rooted trees and hypertrees are often easier to study than unrooted ones. Therefore, we define \emph{rooted hypertrees}, \emph{edge-pointed hypertrees} and \emph{rooted edge-pointed hypertrees}, which are hypertrees with respectively a distinguished vertex, a distinguished edge and a distinguished vertex inside a distinguished edge. We also define hollow hypertrees, which are hypertrees with one vertex labelled by $\#$ in one and only one edge. As decorated hypertrees and rooted and pointed variants of them are linked by the dissymetry principle \ref{principe de dissymétrie}, we study rooted and pointed decorated hypertrees to obtain results for decorated hypertrees. We get other relations between decorated hypertrees and rooted and pointed variants of them in Proposition \ref{décomp}. These relations are used to study the action of the symmetric group for some particular cases in §3. They are generalized for bi-decorated hypertrees in Proposition \ref{décompbi}.

 One of our main enumerative results is Theorem \ref{cor S}, which describes generating series of decorated hypertrees and all their pointed and rooted variants: 

\begin{thmnonnum} 
Given a species $\mathcal{S}$, the generating series of the species of edge-decorated rooted hypertrees, edge-decorated hollow hypertrees and edge-decorated hypertrees have the following expressions: 
\begin{equation*} 
\Sp\left(x\right)= \frac{x}{t} + \frac{1}{t} \sum_{n \geq 2} \sum_{k=1}^{n-1} E_\mathcal{S}\left(k,n-1\right)\left( nt \right)^{k} \frac{x^n}{n!},
\end{equation*}
\begin{equation*}
\Sc\left(x\right)= \sum_{n \geq 1} \sum_{k=1}^{n} E_\mathcal{S}\left(k,n\right)\left(nt\right)^{k-1} \frac{x^n}{n!},
\end{equation*}
\begin{equation*}
\Ss\left(x\right)= \frac{x}{t} + \sum_{n \geq 2} \sum_{k=1}^{n-1} E_\mathcal{S}\left(k,n-1\right)\left(nt\right)^{k-1} \frac{x^n}{n!},
\end{equation*}
where $E_\mathcal{S}\left(k,n\right)$ is the number of sets of $k$ sets on $n$ vertices with a $\mathcal{S}'$-structure on each of them and $\mathcal{S}'$ is the differential of the species $\mathcal{S}$.

The generating series of the species of rooted edge-decorated edge-pointed hypertrees and the species of edge-decorated edge-pointed hypertrees are related to the species of rooted edge-decorated hypertrees and the species of hollow edge-decorated hypertrees by the following relations: 
\begin{equation*}
\Spa\left(x\right)= t \Sp\left(x\right)\times \Sc\left(x\right),
\end{equation*}
\begin{equation*}
\Sa\left(x\right)= \Ss\left(x\right)+\Spa\left(x\right)-\Sp\left(x\right).
\end{equation*}
These formulas give the cardinality of all types of decorated hypertrees in terms of decorated sets.
\end{thmnonnum}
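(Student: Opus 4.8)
\emph{Plan.} The strategy is to reduce all five series to a single functional equation for \emph{rooted} edge-decorated hypertrees, obtained by cutting at the root, to solve that equation by Lagrange inversion, and then to obtain the remaining series by elementary transformations. First I would cut an edge-decorated rooted hypertree at its root $r$: the edges through $r$ meet pairwise only at $r$, and deleting $r$ breaks the hypertree into connected pieces, one rooted at each vertex of $e\setminus\{r\}$ for every edge $e\ni r$. The point is that such an edge $e$, carrying its $\mathcal{S}$-structure and the rooted sub-hypertrees pending from the vertices of $e\setminus\{r\}$, is exactly a hollow edge-decorated hypertree whose hole plays the role of $r$: pointing at $r$ inside the $\mathcal{S}$-structure on $e$ turns it into an $\mathcal{S}'$-structure on the (non-empty) set $e\setminus\{r\}$, which is precisely the set of roots of those sub-hypertrees. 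Weighting every edge by $t$ and writing $\mathcal{E}$ for the species of sets, this gives the species identities $\hsp=X\cdot\mathcal{E}(\hsc)$ and $\hsc=t\cdot\mathcal{S}'(\hsp)$, with $\mathcal{S}'$ restricted to non-empty sets since edges have at least two vertices. Composing them, the exponential generating series $\widetilde H(x)$ of rooted edge-decorated hypertrees weighted by $t^{\#\mathrm{edges}}$ satisfies $\widetilde H=x\,e^{\,t\,s'(\widetilde H)}$, where $s'$ is the series of $\mathcal{S}'$; the series of the statement are the renormalisations $\Sp=\widetilde H/t$ and $\Sc=s'(\widetilde H)$.

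\emph{Lagrange inversion.} Applying Lagrange inversion to $\widetilde H=x\,\varphi(\widetilde H)$ with $\varphi(u)=e^{t\,s'(u)}$ and expanding $\varphi(u)^n=\sum_k\tfrac{(nt)^k}{k!}\,s'(u)^k$, I would recognise $\tfrac{k!}{m!}[u^m]s'(u)^k$ as the number of partitions of an $m$-set into $k$ non-empty blocks each equipped with an $\mathcal{S}'$-structure, that is $E_{\mathcal{S}}(k,m)$ (the factor $k!$ absorbing the $1/k!$ of the exponential). This yields $[x^n/n!]\widetilde H=\sum_k(nt)^k E_{\mathcal{S}}(k,n-1)$ for $n\ge2$ (and $1$ for $n=1$), hence the announced formula for $\Sp=\widetilde H/t$; equivalently, the number of rooted $\mathcal{S}$-decorated hypertrees on $[n]$ with $k$ edges is $n^k E_{\mathcal{S}}(k,n-1)$. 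Similarly, from $t\,s'(\widetilde H)=\ln\bigl(\widetilde H/x\bigr)$, Lagrange inversion gives $[x^n/n!]\bigl(t\,s'(\widetilde H)\bigr)=\tfrac1n\sum_k(nt)^kE_{\mathcal{S}}(k,n)$, whence the formula for $\Sc=s'(\widetilde H)$.

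\emph{The three remaining series.} Since rooting a decorated hypertree is exactly pointing one of its vertices, $x\,\Ss'(x)=\Sp(x)$; integrating, with the one-vertex hypertree fixing the constant term, gives the formula for $\Ss$. For the last two relations I would invoke the structural results already in the paper: cutting a rooted edge-pointed hypertree at the pointed vertex inside the pointed edge splits it into the hollow hypertree made of that edge together with its pendant rooted hypertrees (hole at the pointed vertex) and the rooted hypertree carrying the remaining edges at that vertex, so $\hspa\cong\hsc\times\hsp$ at the level of the $t^{\#\mathrm{edges}}$-weighted species; since the numbers of edges add while each of $\Sp,\Sc,\Spa$ is the corresponding weighted series divided by $t$, this gives $\Spa=t\,\Sp\times\Sc$. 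Finally, the dissymmetry principle for decorated hypertrees yields the species identity $\hsa+\hsp\cong\hs+\hspa$, hence $\Sa=\Ss+\Spa-\Sp$.

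\emph{Main obstacle.} The only delicate part is the first step: setting up the two bijections correctly — especially identifying ``an edge together with its pending subtrees'' with a hollow hypertree and the induced passage $\mathcal{S}\rightsquigarrow\mathcal{S}'$ at the root or at the hole — and keeping the $t$-grading consistent through the renormalisations $\Sp=\widetilde H/t$, $\Sc=s'(\widetilde H)$ and $\Spa=t\,\Sp\times\Sc$; one must also build into the definition of $E_{\mathcal{S}}(k,n)$ that the $k$ blocks are non-empty, i.e. that edges have at least two vertices. The Lagrange inversion of the second step is then routine; combinatorially it amounts to enumerating the incidence (``box'') trees of the hypertrees, which is the source of the factors $(nt)^{k-1}$ and $(nt)^k$ and of the description of all these cardinalities in terms of decorated sets.
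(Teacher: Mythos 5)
Your proof is correct, but the core enumeration goes by a genuinely different route than the paper's. You first establish the functional equations $t\Sp = x\exp(t\Sc)$ and $\Sc = s'(t\Sp)$ by cutting at the root and at the hollow edge --- these are exactly the species relations \eqref{relhp} and \eqref{relhc}, proved in the paper by the same decompositions --- and then extract coefficients by Lagrange inversion applied to $\widetilde H = x\,e^{t s'(\widetilde H)}$ (and to $\ln(\widetilde H/x)$ for the hollow series). The paper never inverts the functional equation analytically: instead it converts the same root decomposition into an explicit bijection between rooted (resp.\ hollow) decorated hypertrees and triples $\left(r,\mathbb{S},BT\right)$ (resp.\ pairs $\left(\mathbb{S},BT\right)$) where $BT$ is a box tree (Propositions \ref{btrhypertree} and \ref{bthhypertree}), and then counts box trees by a direct induction (Proposition \ref{enum box tree}), which produces the factors $n^{k}$ and $n^{k-1}$ bijectively; your closing remark correctly identifies this as the combinatorial content of the Lagrange inversion. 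What the bijective route buys is the refinement of Corollary \ref{formule Spx}, where the count $n^{k-1}$ is upgraded to $\left(x_1+\cdots+x_n\right)^{k-1}$ by tracking petioles through the same bijection; your analytic route would need a multivariate Lagrange inversion to recover that, while your route is shorter and uses standard machinery. The treatment of $\Ss$ (integration of $\Sp$ via $\hsp = X\times\hsprime$), of $\Spa$ (the product decomposition at the pointed vertex of the pointed edge) and of $\Sa$ (dissymmetry) coincides with the paper's. One small slip: the identity you want is $E_\mathcal{S}\left(k,m\right)=\frac{m!}{k!}\left[u^m\right]s'\left(u\right)^k$, not $\frac{k!}{m!}\left[u^m\right]s'\left(u\right)^k$; your subsequent coefficient extraction uses the correct normalisation, so this is only a typo, not a gap.
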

To prove this theorem, we introduce the notion of \emph{box trees}. A box tree is a graph whose vertices are sets of elements and whose edges are oriented from an element of a vertex to an other vertex. Moreover, the graph obtained by collapsing all the elements of a vertex to a single point has to be a rooted oriented tree.

\medskip

Our next aim is to prove that decorated hypertrees and bi-decorated hypertrees, for some specific choices of decorations, can be linked to known objects of various origins. For decorated hypertrees, two cases are studied in §3. The first one is the weighted hypertrees introduced in \cite{JMcCM}, which corresponds to hypertrees with edges decorated by the linear species $\widehat{\operatorname{PreLie}}$. The rooted variants of these hypertrees are hypertrees where every edge $e$ contains a single vertex and a rooted tree whose vertices are the other vertices of $e$. These hypertrees are in bijection with 2-coloured rooted trees, which are rooted trees whose edges are either blue or red. This bijection gives the character for the action of the symmetric group on the hollow variant of these decorated hypertrees. The relations between species found in §1 then allow to find the character for the other $\widehat{\operatorname{PreLie}}$-decorated hypertrees. The second type of decorated hypertrees studied in §3 are hypertrees with edges decorated by the linear species $\widehat{\operatorname{Lie}}$. We link them with the operad $\Lambda$ which was introduced in the article of F. Chapoton \cite{OpDiff}. 

For bi-decorated hypertrees, we link two types of them with the hypertree poset. The first case is hypertrees with their edges decorated by the set species $\comm$ and the neighbourhood of their vertices decorated by the cycle species. These hypertrees are called cyclic hypertrees and presented as hypertrees with a cyclic ordering of edges around every vertex in the article \cite{ChHyp}. The second case is hypertrees with their edges decorated by the set species $\comm$ and the neighbourhood of their vertices decorated by the species $\Sigma \lie $. We show that the character of the action of the symmetric group on the Whitney homology of the hypertree poset is the same as the character of the action of the symmetric group on these hypertrees.

\medskip

Let us now precisely describe the contents of the different sections. In §1, we recall the definition of hypergraphs and hypertrees and we define the species of edge-decorated hypertrees and its rooted and pointed variants before giving relations between them. In §2, we enumerate edge-decorated hypertrees using box trees. After this general study, we examine the case of $\widehat{\prelie}$-decorated hypertrees and $\widehat{\lie}$-decorated hypertrees by computing their cycle index series in §3. In the last section §4, we generalize edge-decorated hypertrees by also decorating neighbourhoods of vertices and link bi-decorated hypertrees with cyclic hypertrees and the hypertree poset.

We use the notation $\lvert E \rvert $ for the cardinality of a finite set $E$. We will use the language of species, defined in the book of F. Bergeron, G. Labelle and P. Leroux \cite{BLL} and recalled in \ref{def espèce}. The exponents $r$, $e$ and $re$ mean respectively rooted, edge-pointed and edge-pointed rooted.

We particularly thank V. Dotsenko for his useful proof of the Koszulness of the operad $\Lambda$.

\section{Description and relations of the edge-decorated hypertrees}

In this section, we introduce a type of weighted hypertrees, named \emph{edge-decorated hypertrees}, and give functional equations satisfied by them.

	\subsection{From hypergraphs to rooted and pointed hypertrees}

We first recall the definition of hypergraphs and hypertrees.
\begin{defi}
 A \emph{hypergraph (on a set $V$)} is an ordered pair $\left(V,E\right)$ where $V$ is a finite set and $E$ is a collection of parts of $V$ of cardinality at least two. The elements of $V$ are called \emph{vertices} and those of $E$ are called \emph{edges}.
\end{defi}

An example of hypergraph is presented in figure \ref{exple hypergraph}.
		
\begin{figure} 
\centering
\begin{tikzpicture}[scale=1]
\draw (0,0) -- (0,1) node[midway, left]{A};
\draw (0,1) -- (1,1) node[midway, above]{B};
\draw[black, fill=gray!40] (1,1) -- (1,0) -- (2,0) -- (2,1) -- (1,1);
\draw (1.5,0.5) node{C};
\draw[black, fill=gray!40] (2,0) -- (2,1) -- (3,1) -- (2,0);
\draw (2.30,0.70) node{D};
\draw[black, fill=white] (0,0) circle (0.2);
\draw[black, fill=white] (0,1) circle (0.2);
\draw[black, fill=white] (1,1) circle (0.2);
\draw[black, fill=white] (1,0) circle (0.2);
\draw[black, fill=white] (2,0) circle (0.2);
\draw[black, fill=white] (2,1) circle (0.2);
\draw[black, fill=white] (3,1) circle (0.2);
\draw (0,0) node{$4$};
\draw (0,1) node{$7$};
\draw (1,1) node{$6$};
\draw (1,0) node{$5$};
\draw (2,1) node{$1$};
\draw (2,0) node{$2$};
\draw (3,1) node{$3$};
\end{tikzpicture}
\caption{A hypergraph on $\{1,2,3,4,5,6,7\}$. \label{exple hypergraph}}
\end{figure}
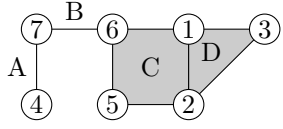

\begin{defi} Let $H=\left(V,E\right)$ be a hypergraph, $v$ and $w$ two vertices of $H$.
		 A \emph{walk from $v$ to $w$ in $H$} is an alternating sequence of vertices and edges 
		 \begin{equation*}\left(v = v_1, e_1, v_2, \ldots, e_n, v_{n+1} = w\right)
		 \end{equation*} 
		 where \begin{itemize}
		 \item for all $i$ in $\{1,\ldots,n+1\}$, $v_i \in V$, $e_i \in E$ 
		 \item and for all $i$ in $\{1,\ldots,n\}$, $\{v_i,v_{i+1}\} \subseteq e_i$. 
		 \end{itemize}
\end{defi}

\begin{exple} In figure \ref{exple hypergraph}, there are several walks from $4$ to $2$ as, for instance: 

$\left(4,A,7,B,6,C,2\right)$ and $\left(4,A,7,B,6,C,1,D,2\right)$.
\end{exple}

In this article, we are interested in a special type of hypergraphs: hypertrees.

\begin{defi} A \emph{hypertree} is a non-empty hypergraph $H$ such that, given any vertices $v$ and $w$ in $H$, 
\begin{itemize}
\item there exists a walk from $v$ to $w$ in $H$ with distinct edges $e_i$, i.e. $H$ is \emph{connected},
\item and this walk is unique, i.e. $H$ has \emph{no cycle}. 
 \end{itemize}

The pair $H=\left(V,E\right)$ is called \emph{hypertree on $V$}. If $V$ is the set $ \{ 1, \ldots, n \}$, then $H$ is called a \emph{hypertree on $n$ vertices}.
\end{defi}

Let us remark that we can consider hypertrees as bipartite trees, where one type of vertex is labelled by elements of $V$, the other type is not labelled and every vertex of this type stands for an edge of the hypertree. An example of hypertree is presented in figure \ref{exh}.

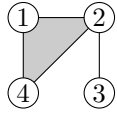
\begin{figure} 
\centering 
\begin{tikzpicture}[scale=1]
\draw[black, fill=gray!40] (0,0) -- (1,1) -- (0,1) -- (0,0);
\draw (1,0) -- (1,1);
\draw[black, fill=white] (0,0) circle (0.2);
\draw[black, fill=white] (0,1) circle (0.2);
\draw[black, fill=white] (1,1) circle (0.2);
\draw[black, fill=white] (1,0) circle (0.2);
\draw (0,0) node{$4$};
\draw (0,1) node{$1$};
\draw (1,1) node{$2$};
\draw (1,0) node{$3$};
\end{tikzpicture}
\caption{A hypertree on $\{1,2,3,4\}$. \label{exh}}
\end{figure}

We now recall rooted and pointed variants of hypertrees.

\begin{defi} A \emph{rooted hypertree} is a hypertree $H$ together with a vertex $s$ of $H$. The hypertree $H$ is said to be \emph{rooted at $s$} and $s$ is called the root of $H$.
\end{defi}

An example of rooted hypertrees is presented in figure \ref{exr}.
\begin{figure} 
\centering 
\begin{tikzpicture}[scale=1]
\draw[black, fill=gray!40] (1,1) -- (1,0) -- (2,0) -- (2,1) -- (1,1);
\draw[black, fill=gray!40] (2,0) -- (3,0) -- (3,1) -- (2,0);
\draw[black, fill=gray!40] (0,0) -- (1,0) -- (0,1) -- (0,0);
\draw (2,2) -- (2,1);
\draw[black, fill=white] (0,0) circle (0.2);
\draw[black, fill=white] (0,1) circle (0.2);
\draw[black, fill=white] (1,1) circle (0.2);
\draw[black, fill=white] (1,0) circle (0.3);
\draw[black, fill=white] (1,0) circle (0.2);
\draw[black, fill=white] (2,0) circle (0.2);
\draw[black, fill=white] (2,1) circle (0.2);
\draw[black, fill=white] (3,1) circle (0.2);
\draw[black, fill=white] (3,0) circle (0.2);
\draw[black, fill=white] (2,2) circle (0.2);
\draw (0,0) node{$9$};
\draw (0,1) node{$8$};
\draw (1,1) node{$2$};
\draw (1,0) node{$1$};
\draw (2,1) node{$3$};
\draw (2,0) node{$4$};
\draw (3,1) node{$6$};
\draw (2,2) node{$5$};
\draw (3,0) node{$7$};
\end{tikzpicture}
\caption{A hypertree on nine vertices, rooted at $1$. \label{exr}}
\end{figure}
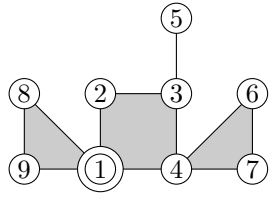

\begin{defi} An \emph{edge-pointed hypertree} is a hypertree $H$ together with an edge $e$ of $H$. The hypertree $H$ is said to be \emph{pointed at $e$}.
\end{defi}

An example of edge-pointed hypertree is presented in figure \ref{exple edge-pointed}.

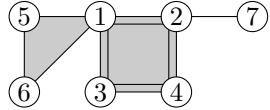
\begin{figure} 
\centering 
\begin{tikzpicture}[scale=1]
\draw[black, fill=gray!40] (1,1) -- (1,0) -- (2,0) -- (2,1) -- (1,1);
\draw[black, fill=gray!40] (1.1,0.9) -- (1.1,0.1) -- (1.9,0.1) -- (1.9,0.9) -- (1.1,0.9);
\draw[black, fill=gray!40] (0,0) -- (1,1) -- (0,1) -- (0,0);
\draw (3,1) -- (2,1);
\draw[black, fill=white] (0,0) circle (0.2);
\draw[black, fill=white] (0,1) circle (0.2);
\draw[black, fill=white] (1,1) circle (0.2);
\draw[black, fill=white] (1,0) circle (0.2);
\draw[black, fill=white] (2,0) circle (0.2);
\draw[black, fill=white] (2,1) circle (0.2);
\draw[black, fill=white] (3,1) circle (0.2);
\draw (0,0) node{$6$};
\draw (0,1) node{$5$};
\draw (1,1) node{$1$};
\draw (1,0) node{$3$};
\draw (2,1) node{$2$};
\draw (2,0) node{$4$};
\draw (3,1) node{$7$};
\end{tikzpicture}
\caption{\label{exple edge-pointed} A hypertree on seven vertices, pointed at the edge $\{1,2,3,4\}$.}
\end{figure}

\begin{defi} An \emph{edge-pointed rooted hypertree} is a hypertree $H$ on at least two vertices, together with an edge $a$ of $H$ and a vertex $v$ of $a$. The hypertree $H$ is said to be \emph{pointed at $a$ and rooted at $s$}.
\end{defi}

An example of edge-pointed rooted hypertree is presented in figure \ref{exrp}.

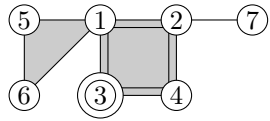
\begin{figure} 
\centering 
\begin{tikzpicture}[scale=1]
\draw[black, fill=gray!40] (1,1) -- (1,0) -- (2,0) -- (2,1) -- (1,1);
\draw[black, fill=gray!40] (1.1,0.9) -- (1.1,0.1) -- (1.9,0.1) -- (1.9,0.9) -- (1.1,0.9);
\draw[black, fill=gray!40] (0,0) -- (1,1) -- (0,1) -- (0,0);
\draw (3,1) -- (2,1);
\draw[black, fill=white] (1,0) circle (0.3);
\draw[black, fill=white] (0,0) circle (0.2);
\draw[black, fill=white] (0,1) circle (0.2);
\draw[black, fill=white] (1,1) circle (0.2);
\draw[black, fill=white] (1,0) circle (0.2);
\draw[black, fill=white] (2,0) circle (0.2);
\draw[black, fill=white] (2,1) circle (0.2);
\draw[black, fill=white] (3,1) circle (0.2);
\draw (0,0) node{$6$};
\draw (0,1) node{$5$};
\draw (1,1) node{$1$};
\draw (1,0) node{$3$};
\draw (2,1) node{$2$};
\draw (2,0) node{$4$};
\draw (3,1) node{$7$};
\end{tikzpicture}
\caption{A hypertree on seven vertices, pointed at edge $\{1,2,3,4\}$ and rooted at $3$. \label{exrp}}
\end{figure}

\begin{defi}
A \emph{hollow hypertree} with vertex set $I$ is a hypertree on the set $\{\#\} \cup I$, such that the vertex labelled by $\#$, called the gap, belongs to one and only one edge. 
\end{defi}

An example of hollow hypertree is presented in figure \ref{exhh}.

\begin{figure} 
\centering 
\begin{tikzpicture}[scale=1]
\draw[black, fill=gray!40] (2,0) -- (3,-1) -- (4,0) -- (3,1) -- (2,0);
\draw[black, fill=gray!40] (1,0) -- (1,1) -- (2,1) -- (2,0) -- (1.5,-1) -- (1,0);
\draw (0,0) -- (1,0);
\draw[black, fill=white] (0,0) circle (0.2);
\draw[black, fill=white] (1,1) circle (0.2);
\draw[black, fill=white] (1,0) circle (0.2);
\draw[black, fill=white] (2,0) circle (0.2);
\draw[black, fill=white] (2,1) circle (0.2);
\draw[black, fill=white] (3,1) circle (0.2);
\draw[black, fill=white] (3,-1) circle (0.2);
\draw[black, fill=white] (4,0) circle (0.2);
\draw[black, fill=white] (1.5,-1) circle (0.2);
\draw (0,0) node{$5$};
\draw (1,1) node{$2$};
\draw (1,0) node{$1$};
\draw (2,1) node{$3$};
\draw (2,0) node{$4$};
\draw (3,1) node{$6$};
\draw (1.5,-1) node{$\#$};
\draw (3,-1) node{$8$};
\draw (4,0) node{$7$};
\end{tikzpicture}
\caption{A hollow hypertree on eight vertices. The hollow edge is the edge $\{1,2,3,4\}$. \label{exhh}}
\end{figure}
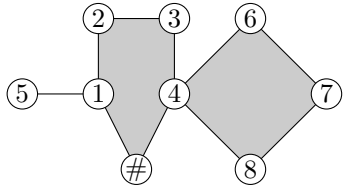

\subsection{Definitions of decorated hypertrees}

From a hypertree, we can define what we call an \emph{edge-decorated hypertree}. This definition uses the following notion of \emph{species}: 

\begin{defi} \label{def espèce} A \emph{species} $\operatorname{F}$ is a functor from the category of finite sets and bijections to the category of finite sets. To a finite set $I$, the species $\operatorname{F}$ associates a finite set $\operatorname{F}\left(I\right)$ independent from the nature of $I$. 
\end{defi}

\begin{exple} \label{exple espèces}
\begin{itemize}
\item The map which associates to a finite set $I$ the set of total orders on $I$ is a species, called the linear order species and denoted by $\assoc$.
\item The map which associates to a finite set $I$ the set $\{I\}$ is a species, called the set species and denoted by $\mathcal{E}$.
\item The map which associates to a finite set $I$ the set of cyclic orders on $I$ is a species, called the cycle species and denoted by $\cycle$.
\item The map which associates to a finite set $I$ the set of hypertrees on $I$ is a species, called the hypertrees species and denoted by $\h$.
\item The map which associates to a finite set $I$ the set $I$ is a species, called the the pointed set species and denoted by ${\perm}$.
\item The map defined for every finite set $I$ by: 
\begin{equation*}
I \mapsto \left\lbrace
\begin{array}{ll}
\{I\} & \text{if } \lvert I \rvert \geq 1, \\
\emptyset & \text{otherwise},
\end{array} \right.
\end{equation*}
is a species denoted by $\comm$.
\item The map defined for every finite set $I$ by: 
\begin{equation*}
I \mapsto \left\lbrace
\begin{array}{ll}
\{I\} & \text{if } \lvert I \rvert = 1, \\
\emptyset & \text{otherwise},
\end{array} \right.
\end{equation*}
is a species, called the singleton species and denoted by $X$.
\end{itemize}
\end{exple}

The map which associates to a finite set $I$ the set of rooted (resp. edge-pointed, edge-pointed rooted, hollow) hypertrees on $I$ is a species, called the rooted (resp. edge-pointed, edge-pointed rooted, hollow) hypertrees species and denoted by $\hp$ (resp. $\ha$, $\hpa$, $\hc$).

\begin{defi}[\cite{BLL}] 
Let $F$ and $G$ be two species. An \emph{isomorphism} of $F$ to $G$ is a family of bijections $\alpha_U: F\left(U\right) \mapsto G\left(U\right)$, where $U$ is a finite set, such that for any finite set $V$, any bijection $\sigma: U \mapsto V$ and any $F$-structure $s \in F\left(u\right)$, the following equality holds: 
\begin{equation*}
\sigma \centerdot \alpha_U\left(s\right)=\alpha_V\left(\sigma \centerdot s\right)
\end{equation*}
where the $\centerdot$ stands for the action of the symmetric group on the structure.

The two species $F$ and $G$ are then said to be isomorphic.
\end{defi}

Having defined species, we can now give the definition of an \emph{edge-decorated hypertree}.

\begin{defi}
Given a species $\mathcal{S}$, an \emph{edge-decorated (edge-pointed) hypertree} is obtained from an (edge-pointed) hypertree $H$ by choosing for every edge $e$ of $H$ an element of $\mathcal{S}\left(V_e\right)$, where $V_e$ is the set of vertices in the edge $e$.
\end{defi}

Given a species $\mathcal{S}$, the map which associates to a finite set $I$ the set of edge-decorated hypertrees on $I$ is a species, called the $\mathcal{S}$-edge-decorated hypertrees species and denoted by $\hs$. The map which associates to a finite set $I$ the set of edge-decorated edge-pointed hypertrees on $I$ is a species, called the $\mathcal{S}$-edge-decorated edge-pointed hypertrees species and denoted by $\hsa$. When the species used is obvious, we will omit to write it.

\begin{exple} We consider two different decorations of the same hypertree $H$.

An edge-decorated hypertree obtained from $H$ by decorating its edges with the cycle species $\cycle$ is drawn in the left part of figure \ref{2dec}. 

An edge-decorated hypertree obtained from $H$ by decorating its edges with the pointed set species $Perm$ is drawn in the right part of figure \ref{2dec}. The vertex of each edge with a star $*$ next to it is the pointed vertex of the edge: for instance, the pointed vertex of $\{9,11\}$ is $9$, the pointed vertex of $\{9,10\}$ is $10$, the pointed vertex of $\{8,9\}$ is $9$ and the pointed vertex of $\{12,13,1,8\}$ is $12$. 

\begin{figure} 
\centering 
\begin{tikzpicture}[scale=1]
\draw[black, fill=gray!40] (1,2) -- (1,3) -- (2,3) -- (2,2) -- (1,2);
\draw[black, fill=gray!40] (2,0) -- (3,0) -- (2,1) -- (2,0);
\draw[black, fill=gray!40] (2,1) -- (2,2) -- (3,2) -- (2,1);
\draw (0,0) -- (1,1);
\draw (1,0) -- (1,1);
\draw (1,2) -- (1,1);
\draw (2,3) -- (1,3);
\draw (3,2) -- (4,2);
\draw (3,2) -- (3,1);
\draw[>=stealth, ->] (0.30,0.20) -- (0.80,0.70);
\draw[>=stealth, ->] (0.70,0.80) -- (0.20,0.30);
\draw[>=stealth, ->] (1.07,0.25) -- (1.07,0.75);
\draw[>=stealth, ->] (0.93,0.75) -- (0.93,0.25);
\draw[>=stealth, ->] (1.07,1.25) -- (1.07,1.75);
\draw[>=stealth, ->] (0.93,1.75) -- (0.93,1.25);
\draw[>=stealth, ->] (3.07,1.25) -- (3.07,1.75);
\draw[>=stealth, ->] (2.93,1.75) -- (2.93,1.25);
\draw[>=stealth, ->] (3.25,2.07) -- (3.75,2.07);
\draw[>=stealth, ->] (3.75,1.93) -- (3.25,1.93);
\draw[>=stealth, ->] (2.75,1.90) -- (2.25,1.90);
\draw[>=stealth, ->] (2.10,1.75) -- (2.10,1.25);
\draw[>=stealth, ->] (2.20,1.30) -- (2.70,1.80);
\draw[>=stealth, ->] (2.10,0.25) -- (2.10,0.75);
\draw[>=stealth, ->] (2.20,0.70) -- (2.70,0.20);
\draw[>=stealth, ->] (2.75,0.07) -- (2.25,0.07);
\draw[>=stealth, ->] (1.20,2.20) -- (1.80,2.80);
\draw[>=stealth, ->] (1.75,2.90) -- (1.25,2.90);
\draw[>=stealth, ->] (1.20,2.80) -- (1.80,2.20);
\draw[>=stealth, ->] (1.75,2.15) -- (1.25,2.15);

\draw[black, fill=white] (0,0) circle (0.2);
\draw[black, fill=white] (1,0) circle (0.2);
\draw[black, fill=white] (1,1) circle (0.2);
\draw[black, fill=white] (1,2) circle (0.2);
\draw[black, fill=white] (1,3) circle (0.2);
\draw[black, fill=white] (2,0) circle (0.2);
\draw[black, fill=white] (2,1) circle (0.2);
\draw[black, fill=white] (2,2) circle (0.2);
\draw[black, fill=white] (2,3) circle (0.2);
\draw[black, fill=white] (3,0) circle (0.2);
\draw[black, fill=white] (3,1) circle (0.2);
\draw[black, fill=white] (3,2) circle (0.2);
\draw[black, fill=white] (4,2) circle (0.2);
\draw (0,0) node{$11$};
\draw (1,1) node{$9$};
\draw (1,0) node{$10$};
\draw (1,2) node{$8$};
\draw (1,3) node{$12$};
\draw (2,0) node{$4$};
\draw (2,1) node{$3$};
\draw (2,2) node{$1$};
\draw (2,3) node{$13$};
\draw (3,0) node{$5$};
\draw (3,1) node{$6$};
\draw (3,2) node{$2$};
\draw (4,2) node{$7$};

\draw [black, fill=gray!40] (6,2) -- (6,3) -- (7,3) -- (7,2) -- (6,2);
\draw[black, fill=gray!40] (7,0) -- (8,0) -- (7,1) -- (7,0);
\draw[black, fill=gray!40] (7,1) -- (7,2) -- (8,2) -- (7,1);
\draw (6,0) -- (6,1);
\draw (6,2) -- (6,1);
\draw (8,2) -- (9,2);
\draw (8,2) -- (8,1);
\draw (5,0) -- (6,1);
\draw (6,2) -- (6,1);
\draw (6,0) -- (6,1);
\draw[black, fill=white] (5,0) circle (0.2);
\draw[black, fill=white] (6,0) circle (0.2);
\draw[black, fill=white] (6,1) circle (0.2);
\draw[black, fill=white] (6,2) circle (0.2);
\draw[black, fill=white] (6,3) circle (0.2);
\draw[black, fill=white] (7,0) circle (0.2);
\draw[black, fill=white] (7,1) circle (0.2);
\draw[black, fill=white] (7,2) circle (0.2);
\draw[black, fill=white] (7,3) circle (0.2);
\draw[black, fill=white] (8,0) circle (0.2);
\draw[black, fill=white] (8,1) circle (0.2);
\draw[black, fill=white] (8,2) circle (0.2);
\draw[black, fill=white] (9,2) circle (0.2);
\draw (5,0) node{$11$};
\draw (6,1) node{$9$};
\draw (5.7,0.9) node{$*$};
\draw (6.1,0.3) node{$*$};
\draw (6.1,1.3) node{$*$};
\draw (6,0) node{$10$};
\draw (6,2) node{$8$};
\draw (6,3) node{$12$};
\draw (6.25,2.75) node{$*$};
\draw (7,0) node{$4$};
\draw (7.2,0.2) node{$*$};
\draw (7,1) node{$3$};
\draw (7.1,1.3) node{$*$};
\draw (7,2) node{$1$};
\draw (7,3) node{$13$};
\draw (8,0) node{$5$};
\draw (8,1) node{$6$};
\draw (8.1,1.3) node{$*$};
\draw (8,2) node{$2$};
\draw (9,2) node{$7$};
\draw (8.7,2.1) node{$*$};
\end{tikzpicture}
\caption{Edge-decorated hypertrees. \label{2dec}}
\end{figure}
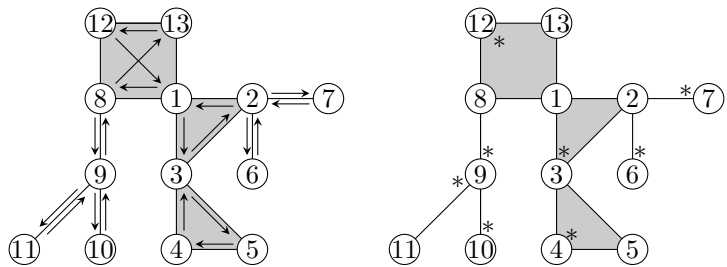

In figure \ref{eprt}, we now decorate the edge-pointed hypertree of figure \ref{exple edge-pointed} with the species of trees, which associates to a finite set $I$ the set of trees with vertex set $I$. The edges are represented by rectangles on the scheme, with the pointed edge represented by the double rectangle.

\begin{figure} 
\centering 
\begin{tikzpicture}[scale=1]
\draw[black, fill=gray!40, rounded corners] (-0.5,-0.5) rectangle (1.5,2.5);
\draw[black, fill=gray!40, rounded corners] (-0.4,-0.4) rectangle (1.4,2.4);
\draw (1,0) -- (1,1);
\draw (1,1) -- (0,2);
\draw (1,1) -- (1,2);
\draw[black, fill=gray!40, rounded corners] (0.5,1.7) rectangle (3.5,2.3);
\draw (1,2) -- (2,2);
\draw (3,2) -- (2,2);
\draw[black, fill=gray!40, rounded corners] (0.5,-0.3) rectangle (2.5,0.3);
\draw (1,0) -- (2,0);
\draw[densely dotted] (1,0) -- (1,1);
\draw[densely dotted] (1,1) -- (1,2);
\draw[black, fill=white] (1,0) circle (0.2);
\draw[black, fill=white] (2,0) circle (0.2);
\draw[black, fill=white] (1,1) circle (0.2);
\draw[black, fill=white] (3,2) circle (0.2);
\draw[black, fill=white] (0,2) circle (0.2);
\draw[black, fill=white] (1,2) circle (0.2);
\draw[black, fill=white] (2,2) circle (0.2);
\draw (3,2) node{$6$};
\draw (2,2) node{$5$};
\draw (1,2) node{$1$};
\draw (1,1) node{$3$};
\draw (1,0) node{$2$};
\draw (0,2) node{$4$};
\draw (2,0) node{$7$};
\end{tikzpicture}
\caption{Edge-pointed decorated hypertree. \label{eprt}}
\end{figure}
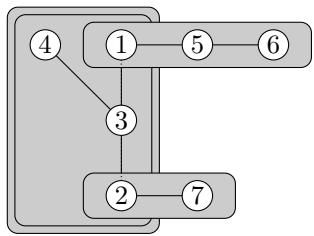

\end{exple}

We now give definitions for rooted or hollow variants of edge-decorated hypertrees.
			
To define \emph{edge-decorated rooted hypertrees}, we will need the following operation on species: 

\begin{defi} Let $F$ be a species. The differential of $F$ is the species defined as follow: 
\begin{equation*}
F'\left(I\right)=F\left(I \sqcup \{\bullet \} \right).
\end{equation*}
\end{defi}

\begin{exple} Considering the examples of species given in Example \ref{exple espèces}, their differentials are the following:
\begin{itemize}
\item The differential of the cycle species $\cycle$ is the linear order species $\assoc$. 
\item The differential of the set species $\mathcal{E}$ is $\mathcal{E}$.
\item The differential of the pointed set species $\perm$ is the species $\perm'$ such that, for all finite set $I$, the set $\perm'\left(I\right)$ is equal to the set $\mathcal{E}\left(I\right) \cup \perm\left(I\right)$. 
\end{itemize}
\end{exple}

We can now define the decoration for edges of rooted variants of hypertrees. There is a definition of edge-decorated rooted, edge-decorated edge-pointed rooted or edge-decorated hollow hypertrees analogous to the one of edge-decorated hypertrees and edge-decorated edge-pointed hypertrees: 

\begin{defi}
Given a species $\mathcal{S}$, an \emph{edge-decorated (edge-pointed) rooted hypertree} (resp. \emph{edge-decorated hollow hypertree}) is obtained from an (edge-pointed) rooted (resp. hollow) hypertree $H$ by choosing for every edge $e$ of $H$ an element of $\mathcal{S}\left(V_e\right)$, where $V_e$ is the set of vertices in the edge $e$.
\end{defi}

In rooted or hollow hypertrees, there is one distinguished vertex in every edge. Therefore, using the definition of the differential of a species, we obtain the following equivalent definition: 

\begin{defi} \label{1.23}
Let us consider a rooted (resp. edge-pointed rooted, resp. hollow) hypertree $H$. Given an edge $e$ of $H$, there is one vertex of $e$ which is the nearest from the root (resp. the gap) of $H$ in $e$: let us call it the \emph{petiole} of $e$. Then, an \emph{edge-decorated rooted hypertree} (resp. \emph{edge-decorated rooted edge-pointed hypertree}, resp. \emph{edge-decorated hollow hypertree}) is obtained from the hypertree $H$ by choosing for every edge $e$ of $H$ an element in the set $\mathcal{S}'\left(V_e^l\right)$, where the set $V_e^l$ is the set of vertices of $e$ different from the petiole.
\end{defi}

The map which associates to a finite set $I$ the set of edge-decorated rooted hypertrees on $I$ is a species, called the $\mathcal{S}$-edge-decorated rooted hypertrees species and denoted by $\hsp$.

The map which associates to a finite set $I$ the set of edge-decorated edge-pointed rooted hypertrees on $I$ is a species, called the $\mathcal{S}$-edge-decorated edge-pointed rooted hypertrees species and denoted by $\hspa$.

The map which associates to a finite set $I$ the set of edge-decorated hollow hypertrees on $I$ is a species, called the $\mathcal{S}$-edge-decorated hollow hypertrees species and denoted by $\hsc$.

\begin{exple} \label{exemple rooted} We give an example of an edge-decorated rooted hypertree. The left-side part of figure \ref{rdh} is the one obtained when edges are decorated by the cycles species. The right-side part of figure \ref{rdh} is the same rooted hypertree in which the set of elements of an edge different from the petiole forms a list. The two pictures are two representations of the same hypertree.

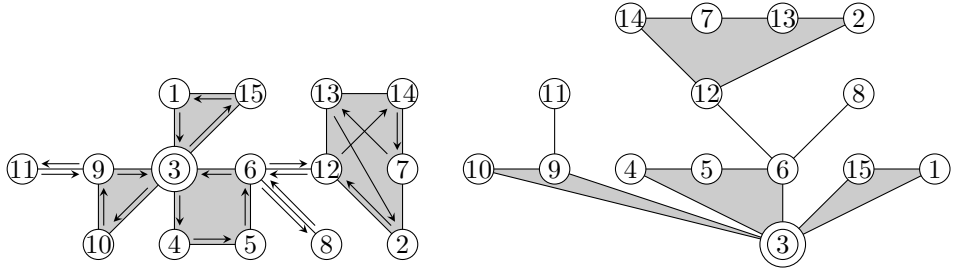
\begin{figure} 
\centering 
\begin{tikzpicture}[scale=1]
\draw[black, fill=gray!40] (1,0) -- (1,1) -- (2,1) -- (1,0);
\draw[black, fill=gray!40] (2,0) -- (3,0) -- (3,1) -- (2,1) -- (2,0);
\draw[black, fill=gray!40] (2,1) -- (2,2) -- (3,2) -- (2,1);
\draw[black, fill=gray!40] (4,1) -- (4,2) -- (5,2) -- (5,1) -- (5,0) -- (4,1);
\draw (0,1) -- (1,1);
\draw (3,1) -- (4,1);
\draw (3,1) -- (4,0);

\draw[>=stealth, ->] (0.25,0.93) -- (0.75,0.93);
\draw[>=stealth, ->] (0.75,1.07) -- (0.25,1.07);
\draw[>=stealth, ->] (3.25,1.07) -- (3.75,1.07);
\draw[>=stealth, ->] (3.75,0.93) -- (3.25,0.93);
\draw[>=stealth, ->] (3.15,0.75) -- (3.75,0.15);
\draw[>=stealth, ->] (3.85,0.25) -- (3.26,0.85);
\draw[>=stealth, ->] (4.85,0.25) -- (4.26,0.85);
\draw[>=stealth, ->] (4.2,1.2) -- (4.8,1.8);
\draw[>=stealth, ->] (4.93,1.75) -- (4.93,1.25);
\draw[>=stealth, ->] (4.8,1.2) -- (4.2,1.8);
\draw[>=stealth, ->] (4.1,1.7) -- (4.9,0.3);
\draw[>=stealth, ->] (2.75,0.93) -- (2.35,0.93);
\draw[>=stealth, ->] (1.25,0.93) -- (1.65,0.93); 
\draw[>=stealth, ->] (2.07,0.65) -- (2.07,0.25); 
\draw[>=stealth, ->] (2.25,0.07) -- (2.75,0.07); 
\draw[>=stealth, ->] (2.93,0.25) -- (2.93,0.75); 
\draw[>=stealth, ->] (1.65,0.75) -- (1.2,0.3); 
\draw[>=stealth, ->] (1.07, 0.25) -- (1.07,0.75); 
\draw[>=stealth, ->] (2.07,1.75) -- (2.07,1.35); 
\draw[>=stealth, ->] (2.2,1.3) -- (2.75,1.85); 
\draw[>=stealth, ->] (2.75,1.93) -- (2.25,1.93); 

\draw[black, fill=white] (2,1) circle (0.3);
\draw[black, fill=white] (0,1) circle (0.2);
\draw[black, fill=white] (1,0) circle (0.2);
\draw[black, fill=white] (1,1) circle (0.2);
\draw[black, fill=white] (2,0) circle (0.2);
\draw[black, fill=white] (2,1) circle (0.2);
\draw[black, fill=white] (2,2) circle (0.2);
\draw[black, fill=white] (3,0) circle (0.2);
\draw[black, fill=white] (3,1) circle (0.2);
\draw[black, fill=white] (3,2) circle (0.2);
\draw[black, fill=white] (4,0) circle (0.2);
\draw[black, fill=white] (4,1) circle (0.2);
\draw[black, fill=white] (4,2) circle (0.2);
\draw[black, fill=white] (5,0) circle (0.2);
\draw[black, fill=white] (5,1) circle (0.2);
\draw[black, fill=white] (5,2) circle (0.2);
\draw (0,1) node{$11$};
\draw (1,1) node{$9$};
\draw (1,0) node{$10$};
\draw (2,0) node{$4$};
\draw (2,1) node{$3$};
\draw (2,2) node{$1$};
\draw (3,0) node{$5$};
\draw (3,1) node{$6$};
\draw (3,2) node{$15$};
\draw (4,0) node{$8$};
\draw (4,1) node{$12$};
\draw (4,2) node{$13$};
\draw (5,0) node{$2$};
\draw (5,1) node{$7$};
\draw (5,2) node{$14$};

\draw[black, fill=gray!40] (10,0) -- (6,1) -- (7,1) -- (10,0);
\draw[black, fill=gray!40] (10,0) -- (8,1) -- (9,1) -- (10,1) -- (10,0);
\draw[black, fill=gray!40] (10,0) -- (11,1) -- (12,1) -- (10,0);
\draw[black, fill=gray!40] (9,2) -- (8,3) -- (9,3) -- (10,3) -- (11,3) -- (9,2);
\draw (7,1) -- (7,2);
\draw (10,1) -- (9,2);
\draw (10,1) -- (11,2);

\draw[black, fill=white] (10,0) circle (0.3);
\draw[black, fill=white] (10,0) circle (0.2);
\draw[black, fill=white] (6,1) circle (0.2);
\draw[black, fill=white] (7,1) circle (0.2);
\draw[black, fill=white] (7,2) circle (0.2);
\draw[black, fill=white] (8,1) circle (0.2);
\draw[black, fill=white] (9,1) circle (0.2);
\draw[black, fill=white] (10,1) circle (0.2);
\draw[black, fill=white] (11,1) circle (0.2);
\draw[black, fill=white] (12,1) circle (0.2);
\draw[black, fill=white] (9,2) circle (0.2);
\draw[black, fill=white] (11,2) circle (0.2);

\draw[black, fill=white] (8,3) circle (0.2);
\draw[black, fill=white] (9,3) circle (0.2);
\draw[black, fill=white] (10,3) circle (0.2);
\draw[black, fill=white] (11,3) circle (0.2);

\draw (10,0) node{$3$};
\draw (6,1) node{$10$};
\draw (7,1) node{$9$};
\draw (8,1) node{$4$};
\draw (9,1) node{$5$};
\draw (10,1) node{$6$};
\draw (11,1) node{$15$};
\draw (12,1) node{$1$};
\draw (7,2) node{$11$};
\draw (11,2) node{$8$};
\draw (9,2) node{$12$};

\draw (8,3) node{$14$};
\draw (9,3) node{$7$};
\draw (10,3) node{$13$};
\draw (11,3) node{$2$};
\end{tikzpicture}
\caption{Rooted hypertrees decorated by the cycle species. \label{rdh}}
\end{figure}

We now decorate the edges of an edge-pointed rooted hypertree with the species of lists, represented on the left-side of figure \ref{edrh}. The small numbers inside the edges near every vertex is the number of the vertex in the list. The derivative of the species of lists is the species of the unions of two lists. Hence this decorated hypertree is equivalent to the right-side hypertree of the figure where, for all edges, the set of vertices different from the petiole is separated into two lists. We draw a dashed line for the separation in an edge between the two lists. In the edge $\{2,7\}$, the second list is empty.

\begin{figure} 
\centering 
\begin{tikzpicture}[scale=1]
\draw[black, fill=gray!40] (1,1) -- (1,0) -- (2,0) -- (2,1) -- (1,1);
\draw[black, fill=gray!40] (1.1,0.9) -- (1.1,0.1) -- (1.9,0.1) -- (1.9,0.9) -- (1.1,0.9);
\draw[black, fill=gray!40] (0,0) -- (1,1) -- (0,1) -- (0,0);
\draw (3,1) -- (2,1);
\draw[black, fill=white] (1,0) circle (0.3);
\draw[black, fill=white] (0,0) circle (0.2);
\draw[black, fill=white] (0,1) circle (0.2);
\draw[black, fill=white] (1,1) circle (0.2);
\draw[black, fill=white] (1,0) circle (0.2);
\draw[black, fill=white] (2,0) circle (0.2);
\draw[black, fill=white] (2,1) circle (0.2);
\draw[black, fill=white] (3,1) circle (0.2);
\draw (0,0) node{$6$};
\draw (0,1) node{$5$};
\draw (1,1) node{$1$};
\draw (1,0) node{$3$};
\draw (2,1) node{$2$};
\draw (2,0) node{$4$};
\draw (3,1) node{$7$};

\draw (0.25,0.75) node{$_1$};
\draw (0.65,0.80) node{$_2$};
\draw (0.15,0.30) node{$_3$};
\draw (1.30,0.75) node{$_1$};
\draw (1.75,0.30) node{$_3$};
\draw (1.75,0.75) node{$_4$};
\draw (1.30,0.30) node{$_2$};
\draw (2.25,0.75) node{$_2$};
\draw (2.75,0.75) node{$_1$};

\draw[black, fill=gray!40] (6,0) -- (5,1) -- (7,1) -- (8,1) -- (6,0);
\draw[black, fill=gray!40] (5,1) -- (4,2) -- (6,2) -- (5,1);
\draw (8,1) -- (7,2);
\draw[dashed] (6,0) -- (6,1);
\draw[dashed] (8,1) -- (8,2);
\draw[dashed] (5,1) -- (5,2);
\draw[black, fill=white] (6,0) circle (0.3);
\draw[black, fill=white] (6,0) circle (0.2);
\draw[black, fill=white] (7,1) circle (0.2);
\draw[black, fill=white] (8,1) circle (0.2);
\draw[black, fill=white] (5,1) circle (0.2);
\draw[black, fill=white] (4,2) circle (0.2);
\draw[black, fill=white] (6,2) circle (0.2);
\draw[black, fill=white] (7,2) circle (0.2);
\draw (4,2) node{$5$};
\draw (5,1) node{$1$};
\draw (6,0) node{$3$};
\draw (8,1) node{$2$};
\draw (7,1) node{$4$};
\draw (7,2) node{$7$};
\draw (6,2) node{$6$};
\end{tikzpicture}
\caption{Rooted hypertrees decorated by the species of lists. \label{edrh}}
\end{figure}
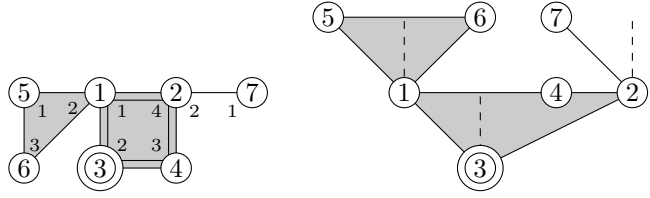

In figure \ref{edrh2}, we give an example of an edge-decorated hollow hypertree decorated by the species of non-empty pointed sets $\perm$. As previously, the left part of the example is obtained with the first definition of edge-decorated hollow hypertrees. We draw a star $*$ next to the pointed vertex. The right part is obtained with the second definition.

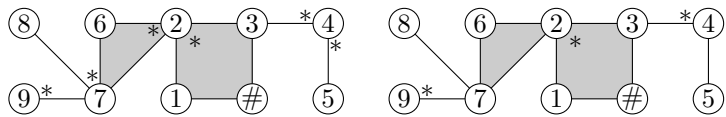
\begin{figure} 
\centering 
\begin{tikzpicture}[scale=1]
\draw[black, fill=gray!40] (1,0) -- (2,1) -- (1,1) -- (1,0);
\draw[black, fill=gray!40] (2,0) -- (2,1) -- (3,1) -- (3,0) -- (2,0);
\draw (0,0) -- (1,0);
\draw (0,1) -- (1,0);
\draw (3,1) -- (4,1);
\draw (4,1) -- (4,0);
\draw[black, fill=white] (0,0) circle (0.2);
\draw[black, fill=white] (0,1) circle (0.2);
\draw[black, fill=white] (1,1) circle (0.2);
\draw[black, fill=white] (1,0) circle (0.2);
\draw[black, fill=white] (2,0) circle (0.2);
\draw[black, fill=white] (2,1) circle (0.2);
\draw[black, fill=white] (3,1) circle (0.2);
\draw[black, fill=white] (3,0) circle (0.2);
\draw[black, fill=white] (4,0) circle (0.2);
\draw[black, fill=white] (4,1) circle (0.2);
\draw (0,0) node{$9$};
\draw (0.3,0.1) node{$*$};
\draw (0.9,0.3) node{$*$};
\draw (0,1) node{$8$};
\draw (1,1) node{$6$};
\draw (1,0) node{$7$};
\draw (1.7,0.9) node{$*$};
\draw (2,1) node{$2$};
\draw (2,0) node{$1$};
\draw (2.25,0.75) node{$*$};
\draw (3,1) node{$3$};
\draw (3,0) node{$\#$};
\draw (4,1) node{$4$};
\draw (3.7,1.1) node{$*$};
\draw (4,0) node{$5$};
\draw (4.1,0.7) node{$*$};

\draw[black, fill=gray!40] (6,0) -- (7,1) -- (6,1) -- (6,0);
\draw[black, fill=gray!40] (7,0) -- (7,1) -- (8,1) -- (8,0) -- (7,0);
\draw (5,0) -- (6,0);
\draw (5,1) -- (6,0);
\draw (8,1) -- (9,1);
\draw (9,1) -- (9,0);
\draw[black, fill=white] (5,0) circle (0.2);
\draw[black, fill=white] (5,1) circle (0.2);
\draw[black, fill=white] (6,1) circle (0.2);
\draw[black, fill=white] (6,0) circle (0.2);
\draw[black, fill=white] (7,0) circle (0.2);
\draw[black, fill=white] (7,1) circle (0.2);
\draw[black, fill=white] (8,1) circle (0.2);
\draw[black, fill=white] (8,0) circle (0.2);
\draw[black, fill=white] (9,0) circle (0.2);
\draw[black, fill=white] (9,1) circle (0.2);
\draw (5,0) node{$9$};
\draw (5.3,0.1) node{$*$};
\draw (5,1) node{$8$};
\draw (6,1) node{$6$};
\draw (6,0) node{$7$};
\draw (7,1) node{$2$};
\draw (7,0) node{$1$};
\draw (7.25,0.75) node{$*$};
\draw (8,1) node{$3$};
\draw (8,0) node{$\#$};
\draw (9,1) node{$4$};
\draw (8.7,1.1) node{$*$};
\draw (9,0) node{$5$};
\end{tikzpicture}
\caption{Rooted hypertrees decorated by the species of pointed sets. \label{edrh2}}
\end{figure}
 \end{exple}
 
We now generalize these definitions to linear species.

\subsection{Generalization to linear species and first relations with other trees}

Definition \ref{1.23} motivates the introduction of rooted or hollow hypertrees with edges decorated by species which are not the differential of a species but of a linear species.

\begin{defi} A \emph{linear species} $\operatorname{F}$ is a functor from the category of finite sets and bijections to the category of vector spaces. To a finite set $I$, the species $\operatorname{F}$ associates a vector space $\operatorname{F}\left(I\right)$ independent from the nature of $I$. 
\end{defi}

We define the differential of linear species: 

\begin{defi} Let $F$ be a linear species. The differential of $F$ is the species defined as follow: 
\begin{equation*}
F'\left(I\right)=F\left(I \sqcup \{\bullet \} \right).
\end{equation*}
\end{defi}

We can define the same operations as for species on linear species.

We can now generalize the decoration of edges of hypertrees: 

\begin{defi}
Given a linear species $\mathcal{S}$, an \emph{edge-decorated (edge-pointed) hypertree} is obtained from an (edge-pointed) hypertree $H$ by choosing for every edge $e$ of $H$ an element of $\mathcal{S}\left(V_e\right)$, where $V_e$ is the set of vertices in the edge $e$.

This decoration is multi-linear.
\end{defi}

We define similarly edge-decorated (edge-pointed) rooted hypertrees and edge-decorated hollow hypertrees.

Let us consider a species $\mathcal{S}$. When there exists a species or a linear species $\mathcal{F}$ such that $\mathcal{F}'=\mathcal{S}$, we denote it by $\widehat{\mathcal{S}}$.

For every cyclic or anticyclic operad $\mathcal{C}$, there always exists a linear species $\widehat{\mathcal{C}}$. This proves the existence of the linear species $\widehat{
\prelie}$, $\widehat{\lie}$, $\widehat{\perm}$ and $\widehat{\assoc}$. This case of operads is examined in the article of F. Chapoton \cite{ChAOp} and in the article of E. Getzler and M. Kapranov \cite{CochGK}.

We will now say \emph{species} for species or linear species.

The notion of edge-decorated hollow hypertrees can be related with different objects according to the decoration. For instance, if we consider the linear species $\widehat{\perm}$ whose derivative is the species $\perm$, the associated edge-decorated hollow hypertrees will be related with what is called fat trees.

In figure \ref{exemple hollow}, we first give an example of $\widehat{\operatorname{Perm}}$-edge-decorated hollow hypertree, where $\operatorname{Perm}$ is the species of non-empty pointed sets. The pointed vertex of each edge is marked by a star $*$ next to it. The petiole of the edge $\{2,6,7\}$ is $2$. Remark that vertices $8$, $9$, $4$ and $5$ are necessarily pointed because they are alone with the petiole in their edge. 

\begin{figure} 
\centering 
\begin{tikzpicture}[scale=1]
\draw[black, fill=gray!40] (1,0) -- (2,1) -- (1,1) -- (1,0);
\draw[black, fill=gray!40] (2,0) -- (2,1) -- (3,1) -- (3,0) -- (2,0);
\draw (0,0) -- (1,0);
\draw (0,1) -- (1,0);
\draw (3,1) -- (4,1);
\draw (4,1) -- (4,0);
\draw[black, fill=white] (0,0) circle (0.2);
\draw[black, fill=white] (0,1) circle (0.2);
\draw[black, fill=white] (1,1) circle (0.2);
\draw[black, fill=white] (1,0) circle (0.2);
\draw[black, fill=white] (2,0) circle (0.2);
\draw[black, fill=white] (2,1) circle (0.2);
\draw[black, fill=white] (3,1) circle (0.2);
\draw[black, fill=white] (3,0) circle (0.2);
\draw[black, fill=white] (4,0) circle (0.2);
\draw[black, fill=white] (4,1) circle (0.2);
\draw (0,0) node{$9$};
\draw (0.3,0.1) node{$*$};
\draw (0.3,1) node{$*$};
\draw (0,1) node{$8$};
\draw (1,1) node{$6$};
\draw (1,0) node{$7$};
\draw (1.1,0.3) node{$*$};
\draw (2,1) node{$2$};
\draw (2,0) node{$1$};
\draw (2.25,0.25) node{$*$};
\draw (3,1) node{$3$};
\draw (3,0) node{$\#$};
\draw (4,1) node{$4$};
\draw (3.7,1.1) node{$*$};
\draw (4,0) node{$5$};
\draw (4.1,0.3) node{$*$};
\end{tikzpicture}
\caption{A $\widehat{\operatorname{Perm}}$-edge-decorated hollow hypertree.\label{exemple hollow}}
\end{figure}

\begin{defi}[ \cite{Zas}] 
A \emph{fat tree on a set $V$} is a partition of $V$, whose parts are called \emph{vertices}, together with edges linking elements of different vertices, such that: 
\begin{itemize}
\item a \emph{walk} on the fat tree is an alternating sequence $\left(a_1, b_1, c_1, a_2, \ldots, c_n\right)$, where for every $i$, $a_i$ and $c_i$ are elements of different vertices and $b_i$ is an edge between $a_i$ and $c_i$, and for every $i$ between $1$ and $n-1$, $c_i$ and $a_{i+1}$ are elements of the same vertex;
\item For every pair of elements of different vertices $\left(a,c\right)$, there exists one and only one walk from $a$ to $c$.
\end{itemize}

A \emph{rooted fat tree} is a fat tree with a distinguished element called the \emph{root}.
\end{defi}

In figure \ref{exemple fat}, an example of a rooted fat tree is presented. The root is circled.

\begin{figure} 
\centering 
\begin{tikzpicture}[scale=1]
\draw[black, rounded corners] (-0.3,-0.3) rectangle(2.3,0.3);
\draw[black, rounded corners] (-0.3,0.7) rectangle(1.3,1.3);
\draw[black, rounded corners] (1.7,0.7) rectangle(2.3,1.3);
\draw[black, rounded corners] (-0.3,1.7) rectangle(0.3,2.3);
\draw[black, rounded corners] (0.7,1.7) rectangle(1.3,2.3);
\draw[black, rounded corners] (1.7,1.7) rectangle(2.3,2.3);
\draw (1,0.85) -- (1,0.15);
\draw (2,0.85) -- (2,0.15);
\draw (1,1.85) -- (1,1.15);
\draw (0,1.85) -- (1,1.15);
\draw (2,1.85) -- (2,1.15);
\draw[black, fill=white] (0,0) circle (0.2);
\draw (0,0) node{$1$};
\draw (1,0) node{$2$};
\draw (2,0) node{$3$};
\draw (2,1) node{$4$};
\draw (2,2) node{$5$};
\draw (0,1) node{$6$};
\draw (1,1) node{$7$};
\draw (1,2) node{$8$};
\draw (0,2) node{$9$};
\end{tikzpicture}
\caption{ \label{exemple fat} A rooted fat tree.}
\end{figure}
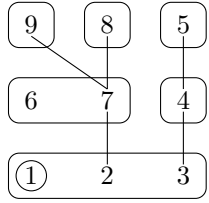

\begin{prop}
The species of $\widehat{\perm}$-edge-decorated hollow hypertrees is isomorphic to the species of rooted fat trees.
\end{prop}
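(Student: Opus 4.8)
The plan is to construct an explicit isomorphism of species directly, without recourse to the dissymmetry principle or to generating series. First I would unwind both sides. By Definition \ref{1.23}, and since $\widehat{\perm}' = \perm$ with $\perm(V_e^l) = V_e^l$, a $\widehat{\perm}$-edge-decorated hollow hypertree on a finite set $I$ is precisely a hollow hypertree $H$ on $\{\#\}\sqcup I$, with hollow edge $e_0$, together with, for every edge $e$ of $H$, a distinguished vertex $\delta(e)\in V_e^l$, where $V_e^l$ is the set of vertices of $e$ other than the petiole of $e$ (the petiole of $e_0$ being $\#$ itself). On the other side, I would use the standard description of a fat tree on a set $V$: it is a partition of $V$ together with a tree structure on its set of blocks, each edge of that tree being realised by exactly one fat-tree edge joining an element of one block to an element of the other; a rooted fat tree additionally carries a distinguished element of $V$.

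Next I would define the map $\Phi$. Given $(H,\delta)$, root $H$ at its gap $\#$. Each vertex of $I$ belongs as a non-petiole vertex to exactly one edge (the edge just above it), so the sets $V_e^l$, as $e$ runs over the edges of $H$, form a partition of $I$: these are the blocks. For each edge $e\neq e_0$ I put one fat-tree edge joining the petiole of $e$ to $\delta(e)$; since the petiole of $e$ lies in $V_{e'}^l$ where $e'$ is the parent of $e$ in the tree of edges of $H$, this edge joins the block of $e$ to that of its parent. I take $\delta(e_0)\in V_{e_0}^l\subseteq I$ to be the root. The point to verify here is that the tree of edges of $H$ rooted at $\#$, with the leaf $\#$ removed, coincides with the tree on blocks obtained by contracting the blocks of $\Phi(H,\delta)$; this shows $\Phi(H,\delta)$ is a genuine rooted fat tree, with a single edge between adjacent blocks, the hypothesis that $\#$ lies in exactly one edge being exactly what makes this block-tree connected with the right number of edges.

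Then I would write the inverse $\Psi$: from a rooted fat tree on $I$ with underlying partition $P$ and root $r$, contract its blocks, root the resulting tree at the block $B_0$ containing $r$, and build a hollow hypertree by setting $e_0=\{\#\}\sqcup B_0$ with $\delta(e_0)=r$, and, for each other block $B$ with parent $B'$ joined to it by the fat-tree edge between $x\in B'$ and $y\in B$, setting $e_B=\{x\}\sqcup B$ with petiole $x$ and $\delta(e_B)=y$. One checks that these edges form a hypertree on $\{\#\}\sqcup I$ whose bipartite tree of edges reproduces the rooted block-tree — hence walks are unique and any two edges meet in at most one vertex — and in which $\#$ lies only in $e_0$; and that $\Phi$ and $\Psi$ are mutually inverse, under the correspondence ``block $V_e^l$ $\leftrightarrow$ edge $e$'' petioles matching petioles, distinguished vertices matching distinguished vertices, and $e=\{\text{petiole of }e\}\sqcup V_e^l$. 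As neither $\Phi$ nor $\Psi$ involves the labelling of $I$, both commute with relabelling bijections, so $\Phi$ is an isomorphism of species.

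I expect the main obstacle to be the two structural checks above: that $\Phi(H,\delta)$ satisfies the fat-tree axioms, and dually that the hypergraph output of $\Psi$ really is a hypertree with the gap in a unique edge. Both rest on a single observation — that in a hollow hypertree rooted at its gap the edges themselves form a rooted tree, with root the hollow edge $e_0$, in which the parent of an edge $e\neq e_0$ is the unique edge containing the petiole of $e$ as a non-petiole vertex — after which the bijection and the verification that $\Phi$ and $\Psi$ are inverse are routine.
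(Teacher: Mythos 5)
Your proposal is correct and is essentially the paper's own argument: both directions of your bijection coincide with the one in the paper (blocks of the fat tree $\leftrightarrow$ the sets $V_e^l$, fat-tree edges joining the petiole of $e$ to the pointed element of $e$, the hollow edge $\leftrightarrow$ the block containing the root). You spell out the structural verifications and the naturality under relabelling more explicitly than the paper does, but the construction itself is the same.
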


\begin{proof}
First, remark that $\widehat{\operatorname{Perm}}$-edge-decorated hollow hypertrees are hollow hypertrees where in every edge, the set of vertices different from the petiole is a pointed set.

Let us consider a rooted fat tree $FT$ on a finite set $V$. We call \emph{petiole} of an edge $e$ the closest vertex of $e$ from the root and \emph{end} the other vertex. 

Let us consider a vertex $v$ of the rooted fat tree. We form an edge of a hypergraph by considering the set of elements of $V$ in the vertex $v$ and the petiole of the edge linking this vertex to the root, if the root is not in the considered edge. Let us call $E$ the set of such edges. The hypergraph $H=\left(E,V\right)$ thus obtained is a hypertree because every path in $H$ was in $FT$. 
If we put a gap in the edge of $E$ containing the root of $FT$, we obtain a hollow hypertree. Moreover, the end of every edge in $FT$ gives a pointed element in the set of vertices of an edge of $H$ different from the petiole. This structure is the same as the one obtained by decorating edges with $\widehat{\operatorname{Perm}}$.

Conversely, let us now take a $\widehat{\operatorname{Perm}}$-edge-decorated hollow hypertree $H$. We consider a structure whose vertices are the edges of $H$ without their petiole and without the gap, with the pointed elements of every edge of $H$ linking to their petiole by an edge in the new structure: this is a fat tree $FT$. By distinguishing the vertex of $FT$ obtained from the hollow edge of $H$, we obtain a rooted fat tree.
\end{proof}

\begin{exple}
The two previous figures \ref{exemple hollow} and \ref{exemple fat} are related by the bijection of the proof.
\end{exple}

	\subsection{Relations}

		\subsubsection{Dissymetry principle}

The reader may consult the book \cite[Chapter 2.3]{BLL} for a deeper explanation on the dissymmetry principle. In a general way, a \emph{dissymmetry principle} is the use of a natural center to obtain the expression of a non pointed species in terms of pointed species. An example of this principle is the use of the center of a tree to express unrooted trees in terms of rooted trees. 

We will consider the following weight on any hypertree (pointed or not, rooted or not, hollow or not): 

\begin{defi}
The weight of a hypertree $H$ with edge set $E$ is given by: 
\begin{equation*}
W_t\left(H\right)=t^{\lvert E \rvert -1}.
\end{equation*}
\end{defi}

The expression of the hypertrees species in term of pointed and rooted hypertrees species is the following:

\begin{prop}[\cite{mar1}] \label{prindi}
The species of hypertrees and the one of rooted hypertrees are related by: 
\begin{equation}
\h+\hpa=\hp+\ha.
\end{equation}
\end{prop}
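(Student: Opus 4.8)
The plan is to use the interpretation of hypertrees as bipartite trees and to recognise the claimed identity as an instance of the dissymmetry principle, with the center of the bipartite tree as canonical center. To a hypertree $H=(V,E)$ associate the bipartite tree $B(H)$ with node set $V \sqcup E$ and an edge between $v \in V$ and $e \in E$ exactly when $v \in e$. Under this correspondence $\hp$ becomes the species of such bipartite trees with a marked vertex-node, $\ha$ the one with a marked edge-node, and $\hpa$ the one with a marked edge of $B(H)$ --- an edge of $B(H)$ being exactly an incidence, i.e.\ a vertex $v$ together with an edge $e$ containing it. So it suffices to produce a natural bijection between $\hp(I) \sqcup \ha(I)$ and $\h(I) \sqcup \hpa(I)$.

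First I would observe that the center of $B(H)$ is always a node, never an edge of $B(H)$. Indeed each edge-node $e$ has degree $\lvert e \rvert \ge 2$, so every leaf of $B(H)$ is a vertex-node; a diameter of $B(H)$ therefore joins two vertex-nodes, hence has even length, which forces the center to be a single node. This is the key point: if the center were an edge of $B(H)$, its two endpoints would compete for the role played below and the construction would fail --- this is precisely the degenerate case that the dissymmetry principle must rule out.

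Next I would define the bijection. Start from $H$ together with a marked node $n$ of $B(H)$ --- a vertex if we came from $\hp$, an edge if we came from $\ha$. If $n$ is the center of $B(H)$, output the underlying hypertree $H \in \h(I)$. Otherwise let $n'$ be the neighbour of $n$ on the path of $B(H)$ from $n$ to the center, and output the incidence $\{n,n'\}$, that is, $H$ pointed at the edge among $\{n,n'\}$ and rooted at the vertex among $\{n,n'\}$, an element of $\hpa(I)$. The inverse sends $H \in \h(I)$ to $H$ marked at its center node, and sends $H$ with a marked incidence $\{v,e\}$ to $H$ marked at whichever of $v$ and $e$ is farther from the center of $B(H)$; this is unambiguous because deleting the edge $\{v,e\}$ splits $B(H)$ into two components and the center lies in exactly one of them. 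A direct check shows the two maps are mutually inverse and commute with relabellings, hence define isomorphisms of species; summing over $I$ yields $\h+\hpa=\hp+\ha$. Since none of these operations changes the edge set, the bijection also preserves the weight $W_t$.

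The only delicate ingredient is the observation isolated in the second paragraph --- that the center of $B(H)$ is a node; everything else is the routine bookkeeping of checking that the two maps are mutually inverse and natural, exactly as in the classical dissymmetry principle for trees.
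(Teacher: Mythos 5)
Your argument is correct and is exactly the dissymmetry-principle proof that the paper invokes for this statement (the paper itself only cites \cite{mar1} and describes the method as "the use of a natural center"): viewing the hypertree as a bipartite tree, observing that its center is always a node because every edge-node has degree at least two, and sending a marked non-central node to the first incidence on the path toward the center. The key observation that the center cannot be an edge of the bipartite tree, and the verification that the two maps are inverse, equivariant and edge-count-preserving, are all present and correct.
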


This bijection links hypertrees with $k$ edges with hypertrees with $k$ edges, and therefore it preserves the weight on hypertrees. Pointing a vertex or an edge of a hypertree and then decorating its edges is just the same as decorating its edges and then pointing a vertex or an edge. Therefore, the decoration of edges is compatible with the previous property \ref{prindi} and we obtain: 

\begin{prop}[Dissymmetry principle for edge-decorated hypertrees] \label{principe de dissymétrie} Given a species $\mathcal{S}$, the following relation holds: 
\begin{equation}
\hs+\hspa=\hsp+\hsa. \label{relpd}
\end{equation}
\end{prop}

		\subsubsection{Functional equations}

To establish relations between species, we will need the following operations on species: 
		
		\begin{defi} Let $F$ and $G$ be two species. We define the following operations on species: 
\begin{itemize}
\item $\left(F + G \right)\left(I\right)=F\left(I\right) \sqcup G\left(I\right)$, (addition)
\item $\left(F\times G \right)\left(I\right)=F\left(I\right) \times G\left(I\right)$, (product)
\item $\left(F \circ G\right)\left(I\right)=\bigsqcup_{\pi \in \mathcal{P}\left(I\right)} F\left(\pi\right) \times \prod_{J \in \pi} G\left(J\right) $, where $\mathcal{P}\left(I\right)$ runs on the set of partitions of $I$.(plethystic substitution)
\end{itemize}
\end{defi}
		
The previous species are related by the following proposition:

\begin{prop} \label{décomp} Given a species or a linear species $\mathcal{S}$ such that $\lvert S\left(\emptyset\right) \rvert = 0$ and $\lvert S\left(\{1\}\right) \rvert= 0 $, the species $\hs$, $\hsp$, $\hsa$, $\hspa$ and $\hsc$ satisfy: 
\begin{equation} 
\hsp =X \times \hsprime,
\end{equation}

\begin{equation} \label{relhp}
t \hsp = X + X \times \comm \circ\left(t \times \hsc\right),
\end{equation}

\begin{equation} \label{relhc}
\hsc = S' \circ t \hsp,
\end{equation}

\begin{equation} \label{rela}
\hsa= S \circ t \hsp,
\end{equation}

\begin{equation} \label{relpa}
\hspa =\hsc \times t \hsp = X \times \bigl( X\left(1+\comm\right)\bigr) \circ \hsc.
\end{equation}
\end{prop}

\begin{proof}
If we multiply the series by $t$, the power of $t$ corresponds to the number of edges in the associated hypertrees.
\begin{itemize}
\item The first relation is due to the relation between a species and its rooted variants.

\item The second one is obtained from a decomposition of a rooted hypertree. If there is only one vertex, the label of the hypertree corresponds to the singleton species $X$. As there is no edge, the power of $t$ is $0$. Otherwise, we separate the label of the root which correxponds to a multiplication by $X$. It remains a hypertree with a gap contained in different edges. Separating these edges, we obtain a non-empty set of hollow hypertrees with edges decorated by $\mathcal{S}$. There is the same number of edges in the set of hollow hypertrees as in the rooted hypertree. This operation is a bijection of species because a vertex alone is a rooted edge-decorated hypertree and taking a non-empty forest of hollow edge-decorated hypertrees and linking them by their gap on which we put a label gives a rooted edge-decorated hypertree.

\item The third relation is obtained by pointing the vertices in the hollow edge and breaking the edge: we obtain a non-empty forest of rooted edge-decorated hypertrees. As we break an edge, there is one edge less in the forest of rooted hypertrees than in the hollow hypertree: we can simplify the $t$ in front of $\hsc$. The set of roots is a $\mathcal{S}'$-structure and induces this structure on the set of trees: we obtain a $\mathcal{S}'$-structure in which all elements are rooted edge-decorated hypertrees. As this operation is reversible and does not depend on the labels of the hollow hypertree, this is a bijection of species.

\item The fourth relation is obtained by pointing the vertices in the pointed edge and breaking it: we obtain a non-empty forest of at least two rooted edge-decorated hypertrees. As we break an edge, there is one edge less in the forest of rooted hypertrees than in the edge-pointed hypertree: we can simplify the $t$ in front of $\hsa$. The set of roots is a $\mathcal{S}$-structure and induces this structure on the set of tree: we obtain a $\mathcal{S}$-structure in which all elements are rooted edge-decorated hypertrees. As this operation is reversible and does not depend on the labels of the hollow hypertree, this is a bijection of species.

\item The last relation is obtained by separating the pointed edge from the other edges containing the root and putting a gap in the pointed edge instead of the root: the connected component of the pointed edge gives an edge-decorated hollow hypertree and the connected component containing the root gives a rooted edge-decorated hypertree. There is the same number of edges in the edge-pointed rooted hypertree as in the union of the hollow and the rooted hypertree. \qedhere
\end{itemize} 
\end{proof}

\begin{cor} \label{coreq}
Using the equations \eqref{relhp} and \eqref{relhc} of the previous proposition, we obtain: 
\begin{equation}
t \hsp = X +X \times \comm \circ \left( t \times S' \circ t \hsp \right)
\end{equation}
and
\begin{equation}
\hsc = S' \circ \left( X + X \times \comm \circ\left(t \times \hsc\right) \right).
\end{equation}
\end{cor}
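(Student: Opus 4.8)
The plan is to obtain both identities by a direct substitution, using nothing beyond the two cited equations of Proposition \ref{d�comp}. Since the operations of sum, product, and plethystic substitution of species all respect equalities of species, replacing a species by an equal one inside such an expression preserves the identity; both claims follow from this observation.

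For the first identity, I would start from \eqref{relhp}, that is $t\hsp = X + X \times \comm \circ \left(t \times \hsc\right)$, and substitute for $\hsc$ the right-hand side of \eqref{relhc}, namely $S' \circ t\hsp$. This gives directly $t\hsp = X + X \times \comm \circ \left(t \times S' \circ t\hsp\right)$. For the second identity, I would instead start from \eqref{relhc}, that is $\hsc = S' \circ t\hsp$, and substitute for $t\hsp$ the right-hand side of \eqref{relhp}, which yields $\hsc = S' \circ \left(X + X \times \comm \circ \left(t \times \hsc\right)\right)$.

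The only point deserving a word of care is that all the identities in Proposition \ref{d�comp} are equalities of species endowed with the weight $W_t$ counting edges, so one should observe that the substitutions above are compatible with this weight grading; but this is immediate, since \eqref{relhp} and \eqref{relhc} are themselves weighted identities and substitution into weight-preserving operations stays weight-preserving. There is no genuine obstacle here: the corollary is a purely formal consequence of the preceding proposition, and the proof is a one-line computation in each case.
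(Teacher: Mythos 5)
Your proof is correct and is exactly the argument the paper intends: the corollary is obtained by substituting \eqref{relhc} into \eqref{relhp} and vice versa, which is why the paper states it without further justification. Nothing more is needed.
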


\section{Counting edge-decorated hypertrees using box trees}

In this section, we count edge-decorated hypertrees and rooted and pointed variants of them using a new type of tree-like structure, called box trees.

	\subsection{Box trees} 
	
Let us consider a quadruple $\left(L,V,R,E\right)$, where
\begin{itemize}
\item $L$ is a finite set called the set of \emph{labels},
\item $V$ is a partition of $L$ called the set of \emph{vertices},
\item $R$ is an element of $V$ called the root,
\item $E$ is a map from $V-\{R\}$ to $L$ called the set of \emph{edges}. 
\end{itemize} 
We will denote by $\tilde{E}$, the map from $V-\{R\}$ to $V$ which associates to a vertex $v$ the vertex $v'$ containing the label $E\left(v\right)$. The pair $\left(V,\tilde{E}\right)$ is then an oriented graph, with vertices labelled by subsets of $L$.

\begin{defi}
A quadruple $\left(L,V,R,E\right)$ is a \emph{box tree} if and only if the graph $\left(V,\tilde{E}\right)$ is a tree, rooted in $R$, with edges oriented toward the root.

A label $l$ is called \emph{parent} of a vertex $v$ if $E\left(v\right)=l$.
\end{defi}	

In figure \ref{exemple box tree}, an example of box trees is presented. The root is the double rectangle.

\begin{figure} 
\centering 
\begin{tikzpicture}[scale=1]
\draw[black, rounded corners] (-0.35,-0.35) rectangle(2.35,0.35);
\draw[black, rounded corners] (-0.3,-0.3) rectangle(2.3,0.3);
\draw[black, rounded corners] (-0.3,0.7) rectangle(1.3,1.3);
\draw[black, rounded corners] (1.7,0.7) rectangle(2.3,1.3);
\draw[black, rounded corners] (-0.3,1.7) rectangle(0.3,2.3);
\draw[black, rounded corners] (0.7,1.7) rectangle(1.3,2.3);
\draw[black, rounded corners] (1.7,1.7) rectangle(2.3,2.3);
\draw (1,0.7) -- (1,0.15);
\draw (2,0.7) -- (2,0.15);
\draw (1,1.7) -- (1,1.15);
\draw (0,1.7) -- (1,1.15);
\draw (2,1.7) -- (2,1.15);
\draw (0,0) node{$1$};
\draw (1,0) node{$2$};
\draw (2,0) node{$3$};
\draw (2,1) node{$4$};
\draw (2,2) node{$5$};
\draw (0,1) node{$6$};
\draw (1,1) node{$7$};
\draw (1,2) node{$8$};
\draw (0,2) node{$9$};
\end{tikzpicture}
\caption{\label{exemple box tree} A box tree. }
\end{figure}
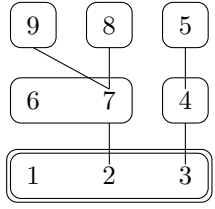

The difference between box trees and fat trees is mainly in the edges, which are between labels for fat trees and a label and a vertex for box trees.

\begin{prop} \label{enum box tree}
Let us consider $L$ a finite set of cardinality $n$ and $V$ a partition of $L$ into $k+1$ parts $p_0,p_1, \ldots, p_k$, where the cardinality of $p_i$ is $n_i$. 
The number of box trees $N_{p_0;p_1, \ldots, p_k}$ on $k+1$ vertices with root labelled by $p_0$ and the other $k$ vertices labelled by one of the $k$ other $p_i$ is given by: 
\begin{equation*}
N_{p_0;p_1, \ldots, p_k}=n_0 \times n^{k-1}.
\end{equation*}
\end{prop}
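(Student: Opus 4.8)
The plan is to count box trees by first forgetting the labelling within the non-root vertices, counting the underlying "rooted forest-like" structure, and then reintroducing the freedom in choosing edge endpoints. Concretely, a box tree on the vertex set $\{p_0,\ldots,p_k\}$ (with $p_0$ the root) consists of two independent pieces of data: first, the rooted tree structure $(V,\tilde E)$ on $k+1$ vertices rooted at $p_0$ with edges oriented toward the root; and second, for each non-root vertex $v$, a choice of the actual label $E(v)\in \tilde E(v)$ lying in the parent vertex. Since the $p_i$ are fixed sets, the second choice for a vertex whose parent is $p_j$ contributes a factor $n_j$.

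First I would recall the classical count: the number of rooted trees on a fixed vertex set of size $k+1$, rooted at a prescribed vertex, is $(k+1)^{k-1}$ by Cayley's formula (equivalently, the number of labelled forests / the generalized Cayley formula). But here we need the refined count that keeps track of how many children each vertex has, because the edge-endpoint factor depends on which vertex is the parent. So instead I would set up the generating-function / direct bijective computation as follows: summing over all box trees, the total is
\begin{equation*}
N_{p_0;p_1,\ldots,p_k}=\sum_{T} \prod_{v \neq R} n_{\operatorname{par}_T(v)},
\end{equation*}
where $T$ ranges over rooted trees on $\{p_0,\ldots,p_k\}$ rooted at $p_0$ with edges toward the root, and $\operatorname{par}_T(v)$ is the index of the parent vertex of $v$ in $T$. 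Grouping the product by parent, each vertex $p_j$ contributes $n_j$ raised to the power of its number of children $c_j(T)$, so
\begin{equation*}
N_{p_0;p_1,\ldots,p_k}=\sum_{T}\prod_{j=0}^{k} n_j^{\,c_j(T)}.
\end{equation*}
Now I would invoke the standard weighted version of Cayley's formula (the multivariate "tree function" identity): the sum over rooted labelled trees on vertex set $\{0,1,\ldots,k\}$ rooted at $0$ of $\prod_j x_j^{c_j(T)}$ equals $x_0\,(x_0+x_1+\cdots+x_k)^{k-1}$. Applying this with $x_j = n_j$ and using $\sum_j n_j = n$ gives exactly $n_0\, n^{k-1}$.

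The main obstacle is justifying that weighted Cayley identity in a self-contained way, since the paper is combinatorial in flavour and may prefer a bijective argument over quoting it. I would handle this either by citing the standard reference (it follows, e.g., from the matrix-tree theorem or from the Prüfer-code bijection, under which the number of times the label $j$ appears in the Prüfer sequence is $c_j(T)$ up to the usual off-by-one at the root), or by an Abel-type inductive argument: peeling off the subtree hanging from the root and summing over the size and content of that subtree. A cleaner alternative that fits the box-tree language is to build the box tree incrementally by the "each new vertex picks a parent label" procedure — order the $k$ non-root vertices, and observe that the number of ways to attach them so that the result is a tree rooted at $p_0$ can be organized so that the total weight telescopes to $n_0 \cdot (n_0 + \cdots + n_k)^{k-1}$; this is precisely the Joyal-style proof of Cayley's formula adapted to the weighted setting. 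Either way, once the weighted formula is in hand the substitution $\sum n_j = n$ finishes the proof, and it is worth noting that the case $k=1$ (the exponent $n^{k-1}=n^0=1$) is consistent: a single non-root vertex $p_1$ attaches by a unique edge to some label among the $n_0$ labels of $p_0$, giving $n_0$ box trees, matching $n_0 \cdot n^0 = n_0$.
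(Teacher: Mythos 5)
Your proof is correct, but it takes a genuinely different route from the paper. The paper argues by induction on $k$: it conditions on the number $j$ of children of the root, cuts off the root (contributing $n_0^j$ for the attachment choices), merges the $j$ children into a new root to apply the induction hypothesis, and finishes with the binomial theorem. You instead factor the count through the underlying rooted tree $(V,\tilde E)$: once that tree is fixed, each non-root vertex independently chooses its parent label, contributing $n_{\operatorname{par}(v)}$, so the count becomes $\sum_T \prod_j n_j^{c_j(T)}$ and the weighted Cayley identity $\sum_T \prod_j x_j^{c_j(T)} = x_0\left(x_0+\cdots+x_k\right)^{k-1}$ does the rest. Your decomposition is sound (the data of a box tree is exactly a rooted tree on $V$ plus, for each non-root vertex, a label in its parent), and your identity is the standard degree-weighted Cayley formula, correctly adjusted for the root (children count versus degree minus one). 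What your route buys: it explains why $n^{k-1}$ appears (it is Cayley's formula in weighted form), and it generalizes with no extra work to the refined count of Proposition \ref{enumx box tree} by replacing $n_j$ with $\sum_{i\in p_j} x_i$, whereas the paper reruns essentially the same induction there. What it costs: the weighted Cayley identity is an external input that must be cited or proved; your Pr\"ufer-code justification is adequate (the label $j$ occurs $d_j-1$ times in the Pr\"ufer sequence, and multiplying by $x_{0}$ converts degrees to children counts at the root), but the alternative "telescoping" sketch at the end is too vague to stand on its own and should either be dropped or replaced by the explicit Pr\"ufer or matrix-tree argument. With that one justification pinned down, the proof is complete.
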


\begin{proof}
We prove this statement by induction on $k$.

For $k=1$, there is only one vertex attached to a label of the root. As there is $n_0$ labels in the root, there is $n_0$ box trees on two vertices.

If this statement holds for all $q<k$, we compute the number $N_{p_0;p_1, \ldots, p_k}$ of box tree on $k+1$ vertices satisfying the hypothesis. It can be obtained by summing on the number of vertices attached to the root, called its \emph{children}.

If the root has $j$ children, with respectively $n_{i_1}, \ldots, n_{i_j}$ labels, there are $n_0^j$ ways of attaching them to the root. Moreover, cutting the root and gluing together the children of the root in one vertex, we obtain a box tree on $k+1-j$ vertices and $n-n_0$ labels, with root having $n_{i_1} + \cdots + n_{i_j}$ labels. Using the induction hypothesis, we obtain: 

\begin{equation*}
N_{p_0;p_1, \ldots, p_k } = \sum_{j=1}^k n_0^j \sum_{0<i_1 < \cdots < i_j}\left(n_{i_1} + \cdots + n_{i_j}\right) \left(n-n_0\right)^{k-j-1}.
\end{equation*}

In the second sum $\sum_{0<i_1 < \cdots < i_j}\left(n_{i_1} + \cdots + n_{i_j}\right)$, every $n_i$ appears $\binom{k-1}{j-1}$ times, for $i \geq 1$. Indeed, in this case the vertex labelled by $p_i$ is a child of the root so we choose the $j-1$ other children among the $k-1$ other vertices. Therefore, we have: 

\begin{equation*}
N_{p_0;p_1, \ldots, p_k } = \sum_{j=1}^k \binom{k-1}{j-1} n_0^j\left(n-n_0\right)^{k-j} = n_0 \times n^{k-1}.
\end{equation*}

This gives the expected result.
\end{proof}

	\subsection{Enumeration of decorated hypertrees}

\begin{prop} \label{btrhypertree}
Given a species $\mathcal{S}$, every rooted hypertree with $k$ edges and $n$ vertices, whose edges are decorated by $\mathcal{S}$, can be decomposed as a triple $\left(r, \mathbb{S}, BT\right)$ where: 
\begin{itemize}
\item $r$ is the root of the hypertree, 
\item $\mathbb{S}$ is a set of $k$ sets on $n-1$ vertices with a $\mathcal{S}'$-structure on each of them,
\item and $BT$ is a box tree on $k+1$ vertices with root labelled by $r$ and each of the other vertices is labelled by one of the $k$ sets of the previous point.
\end{itemize}
\end{prop}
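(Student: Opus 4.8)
The plan is to realize the decomposition as an explicit bijection, reading off the hypertree axioms from the box tree axioms via the bipartite-tree description of hypertrees recalled in \S1.

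\emph{From a decorated rooted hypertree to a triple.} Let $H$ be a rooted hypertree with root $r$, with $k$ edges and $n$ vertices, whose edges are decorated by $\mathcal{S}$. By Definition \ref{1.23} this decoration amounts to a choice, for every edge $e$, of an element of $\mathcal{S}'\left(V_e^l\right)$, where $V_e^l$ is the set of vertices of $e$ other than the petiole. In a rooted hypertree every non-root vertex $v$ lies in exactly one edge as a non-petiole vertex — namely the first edge on the unique walk from $v$ to $r$ — and is the petiole of every other edge containing it, while $r$ is the petiole of every edge through it. Hence the sets $V_e^l$, for $e$ running over the edges of $H$, partition $V \setminus \{r\}$ into $k$ nonempty blocks; equipped with their $\mathcal{S}'$-structures they form the required $\mathbb{S}$, a set of $k$ sets on the $n-1$ vertices of $V \setminus \{r\}$, each carrying an $\mathcal{S}'$-structure. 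For the box tree, take $L = V$ as set of labels, $\{r\}$ together with the $k$ blocks $V_e^l$ as set of vertices, $\{r\}$ as root, and let the edge map send $V_e^l$ to the petiole of $e$, which is a label outside $V_e^l$, so the map is admissible. Since the petiole of an edge is strictly closer to $r$ than its other vertices, iterating this map from any block reaches $\{r\}$ after finitely many steps; as there are $k$ edges on $k+1$ vertices and every vertex reaches the root, $\left(V, \tilde E\right)$ is a tree rooted at $\{r\}$ with all edges oriented toward the root, i.e. a box tree.

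\emph{From a triple to a decorated rooted hypertree.} Conversely, given $\left(r, \mathbb{S}, BT\right)$ as in the statement, write $L$ for the underlying $n$-element vertex set, $P_1, \ldots, P_k$ for the blocks of $\mathbb{S}$ (so $\bigsqcup_i P_i = L \setminus \{r\}$ and $\{r\}, P_1, \ldots, P_k$ are the vertices of $BT$, rooted at $\{r\}$), and $E$ for the edge map of $BT$. For each $i$ set $e_i = P_i \cup \{E(P_i)\}$; since $E(P_i) \notin P_i$ and $P_i$ is nonempty this has at least two vertices, we declare $E(P_i)$ its petiole, and we transport the $\mathcal{S}'(P_i)$-structure to an $\mathcal{S}(e_i)$-structure by identifying the formal element $\bullet$ with $E(P_i)$, again by Definition \ref{1.23}. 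Let $H = \left(L, \{e_1, \ldots, e_k\}\right)$. Following the edges of $BT$ from a block $P_i$ down to $\{r\}$ exhibits in $H$ a walk from any vertex of $P_i$ to $r$, so $H$ is connected; moreover $\sum_i \left(\lvert e_i \rvert - 1\right) = \sum_i \lvert P_i \rvert = n - 1$, so the bipartite incidence graph of $H$ is connected with $n + k - 1$ edges on $n + k$ vertices, hence a tree, hence $H$ is a hypertree. Rooting $H$ at $r$ and decorating as above yields a decorated rooted hypertree, and the same descending-path argument shows that $E(P_i)$ is the vertex of $e_i$ nearest to $r$, so the petioles and the induced decorations are the expected ones. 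The two constructions are visibly mutually inverse.

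\emph{Main obstacle.} The only real content is the translation between the two sets of axioms: in one direction, that the non-petiole sets $V_e^l$ partition exactly $V \setminus \{r\}$ and that the petiole-assignment map generates no cycle; in the other, that connectedness together with the edge count $\sum \left(\lvert e \rvert - 1\right) = n - 1$ already forces $H$ to be a hypertree — via the bipartite-tree characterization — and that, once $H$ is rooted at $r$, the declared petioles coincide with the genuine ones. Everything else is bookkeeping on labels and structures.
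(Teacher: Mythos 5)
Your proposal is correct and follows essentially the same route as the paper: the same explicit bijection, sending each edge to its set of non-petiole vertices to form $\mathbb{S}$ and using the petiole assignment as the edge map of the box tree, with the inverse reattaching each block to its parent label. Your write-up is in fact slightly more detailed than the paper's (the edge-count argument via the bipartite incidence graph justifying that the reconstructed hypergraph is a hypertree is only implicit there), but the underlying decomposition is identical.
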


\begin{proof}

Given a rooted edge-decorated hypertree $H$ with $k$ edges and $n$ vertices and with root labelled by $r$, the edges give a set $\mathbb{S}$ of $k$ sets on $n-1$ vertices with a $\mathcal{S}'$-structure on each of them. Indeed, considering an edge $e$, the set of vertices of $e$ different from the petiole is endowed with a $\mathcal{S}'$-structure, because the rooted hypertree is edge-decorated. Moreover, every vertex different from the root is the petiole of all edges containing it, except the closest edge from the root, which always exists as the hypertree is connected. Therefore, every vertex different from the root is counted once in the sets of $\mathbb{S}$. Then, we call  \emph{set of vertices} $V$ the set of the $k$ previous sets and the root. For all edges $e$ in $H$, we can link the corresponding vertex of $V$ to the petiole of $e$ which belongs to another vertex of $V$: let us call $BT$ the result. As $H$ is a hypertree, for every vertex $v$ there is only one path between the root and $v$ thus, there is also one and only one path from the root to the vertices containing $v$ in $BT$: $BT$ is a box tree with root labelled by $r$. 

Conversely, given a box tree $BT$ with root labelled by $r$ and the other vertices labelled by one of the $k$ sets on $n-1$ vertices with a $\mathcal{S}'$-structure on each of them, we can call \emph{parent} of a vertex the label linked to it with an edge. Calling the set of labels $V$, we can define $E$ as a set of subsets of $V$ obtained by taking, for every vertex of $BT$, the union of the set of its labels with its parent. We thus obtain a hypertree rooted in $r$. Moreover, the $\mathcal{S}$-structure on every vertex of $BT$ induces an $\mathcal{S}$-edge-decoration of the rooted hypertree.

Let us remark that this application is a bijection of sets but not a bijection of species.
\end{proof}

\begin{exple} \label{exple box rooted}
The previous bijection associates to the edge-decorated rooted hypertree of figure \ref{exemple rooted} the root $3$, the set of lists $\{\left(15,1\right)$, $\left(14,7,13,2\right)$, $\left(4,5,6\right)$, $\left(8\right)$, $\left(10,9\right)$, $\left(11\right)$, $\left(12\right)\}$ and the box tree in figure \ref{btf}.

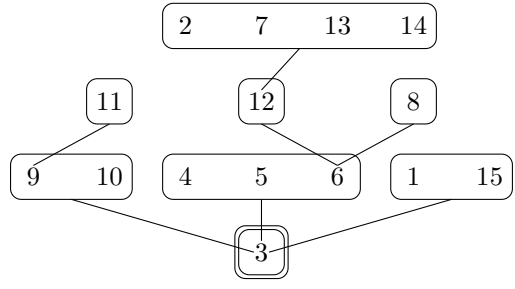
\begin{figure} 
\centering 
\begin{tikzpicture}[scale=1]
\draw[black, rounded corners] (2.7,-0.3) rectangle(3.3,0.3);
\draw[black, rounded corners] (2.65,-0.35) rectangle(3.35,0.35);
\draw[black, rounded corners] (-0.3,0.7) rectangle(1.3,1.3);
\draw[black, rounded corners] (1.7,0.7) rectangle(4.3,1.3);
\draw[black, rounded corners] (4.7,0.7) rectangle(6.3,1.3);
\draw[black, rounded corners] (0.7,1.7) rectangle(1.3,2.3);
\draw[black, rounded corners] (4.7,1.7) rectangle(5.3,2.3);
\draw[black, rounded corners] (2.7,1.7) rectangle(3.3,2.3);
\draw[black, rounded corners] (1.7,2.7) rectangle(5.3,3.3);
\draw (0.5,0.7) -- (2.9,0);
\draw (3,0.7) -- (3,0.15);
\draw (5.5,0.7) -- (3.1,0);
\draw (1,1.7) -- (0,1.15);
\draw (3,1.7) -- (4,1.15);
\draw (5,1.7) -- (4,1.15);
\draw (3,2.15) -- (3.5,2.7);
\draw (3,0) node{$3$};
\draw (0,1) node{$9$};
\draw (1,1) node{$10$};
\draw (2,1) node{$4$};
\draw (3,1) node{$5$};
\draw (4,1) node{$6$};
\draw (5,1) node{$1$};
\draw (6,1) node{$15$};
\draw (1,2) node{$11$};
\draw (5,2) node{$8$};
\draw (3,2) node{$12$};
\draw (2,3) node{$2$};
\draw (3,3) node{$7$};
\draw (4,3) node{$13$};
\draw (5,3) node{$14$};
\end{tikzpicture}
\caption{The box tree associated with the edge-decorated rooted hypertree of figure \ref{exemple rooted}.\label{btf}}
\end{figure}

The order of the elements in each box does not matter.
\end{exple}

\begin{prop} \label{bthhypertree} Given a species $\mathcal{S}$, every hollow hypertree with $k$ edges and $n$ vertices, whose edges are decorated by $\mathcal{S}$ can be decomposed as a pair $\left(\mathbb{S}, BT\right)$ where: 
\begin{itemize}
\item $ \mathbb{S}$ is a set of $k$ sets on $n$ vertices with a $\mathcal{S}'$-structure on each of them,
\item and $BT$ is a box tree on $k$ vertices with each vertex labelled by one of the $k$ sets of the previous point.
\end{itemize}
\end{prop}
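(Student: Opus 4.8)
The plan is to run the argument of Proposition \ref{btrhypertree}, with the gap $\#$ playing the role that the root played there. Starting from an $\mathcal{S}$-edge-decorated hollow hypertree $H$ on the $n$-element vertex set $I$ with $k$ edges, I first regard $H$ as the bipartite tree on $I\sqcup\{\#\}\sqcup E$ rooted at $\#$; each edge $e$ then has a well-defined petiole (its vertex closest to $\#$), which is an element of $I$ for every edge except the unique hollow edge $e_0$, whose petiole is $\#$ itself (indeed $\#$ lies in no other edge). By Definition \ref{1.23}, every edge $e$ carries an $\mathcal{S}'$-structure on $V_e^l=e\setminus\{\mathrm{petiole}(e)\}$, a set of size $|e|-1\geq 1$. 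The combinatorial point to verify is that each vertex of $I$ belongs to exactly one of the sets $V_e^l$: a vertex $v$ is the petiole of every edge containing it except the one edge lying on the path from $v$ to $\#$, for which $v\in V_e^l$. Hence $\{V_e^l\}_{e\in E}$ is a partition of $I$ into $k$ (nonempty) blocks, and together with the decorations it is precisely a set $\mathbb{S}$ of $k$ $\mathcal{S}'$-decorated sets on $n$ vertices.

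Next I build the box tree $BT$ on the vertex set $\{V_e^l\}_{e\in E}$, with $L=I$ as label set: I declare $V_{e_0}^l$ to be the root $R$, and for each $e\neq e_0$ I set $E(V_e^l):=\mathrm{petiole}(e)$, a label that lies in $V_{e'}^l$, where $e'$ is the edge one step closer to $\#$. I then check that the induced oriented graph $(V,\tilde E)$ is a tree rooted at $V_{e_0}^l$ with all edges oriented toward the root: following $e\mapsto\mathrm{petiole}(e)\mapsto e'$ strictly decreases the distance to $\#$, so iterating reaches $e_0$, which yields both connectedness and acyclicity. This produces the desired pair $(\mathbb{S},BT)$.

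For the inverse, from a pair $(\mathbb{S},BT)$ I take as vertex set $\{\#\}\sqcup L$, where $L$ is the (size-$n$) disjoint union of the sets of $\mathbb{S}$, and I turn each non-root box-tree vertex $s$ into the hyperedge $s\cup\{E(s)\}$ and the root vertex $s_R$ into the hyperedge $s_R\cup\{\#\}$, keeping the $\mathcal{S}'$-decorations. I verify this is a hollow hypertree by comparing incidence counts ($\sum_e|e|=n+k$, the number of edges of a tree on the $n+1+k$ incidence nodes) together with connectedness inherited from $BT$ — an edge $s\to\tilde E(s)$ of $BT$ forces the hyperedges built from $s$ and $\tilde E(s)$ to share the vertex $E(s)$ — and I note that $\#$ lies in the single hyperedge coming from $s_R$. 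It then remains to check that, in this reconstructed hypertree, the petiole of the hyperedge built from $s$ is exactly $E(s)$ (and the petiole of the one built from $s_R$ is $\#$), so that the sets $V_e^l$ and their decorations are recovered verbatim; this makes the two constructions mutually inverse.

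I expect the main obstacle to be this last bookkeeping step: identifying ``petiole $=$ parent in the bipartite tree rooted at $\#$'' consistently on both sides and confirming that breaking the hollow edge versus attaching along a petiole-label are genuinely inverse operations — conceptually straightforward, but requiring care. As in Proposition \ref{btrhypertree}, the resulting map is a bijection of sets, not of species, since $BT$ and $\mathbb{S}$ share their underlying labels. Combined with Proposition \ref{enum box tree} (summed over the choice of the root block), this decomposition gives the count $E_\mathcal{S}(k,n)\,n^{k-1}$ of $\mathcal{S}$-edge-decorated hollow hypertrees on $n$ vertices with $k$ edges, hence the generating series $\Sc$.
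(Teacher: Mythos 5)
Your proposal is correct and follows essentially the same route as the paper: the paper's proof is exactly the adaptation of the rooted-case bijection with the gap playing the role of the root, forming the blocks $e\setminus\{\mathrm{petiole}(e)\}$, taking the block of the hollow edge as the root of the box tree, and attaching each other block to the petiole label of its edge, with the same inverse construction. Your write-up is somewhat more explicit than the paper's (which mostly says ``we adapt the proof of the rooted case''), but there is no difference in substance, and your closing remarks on the set-level (non-species) nature of the bijection and the count $E_\mathcal{S}(k,n)\,n^{k-1}$ match the paper's use of the proposition in Theorem \ref{cor S}.
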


\begin{proof}
We adapt the proof of the theorem for rooted hypertrees to hollow hypertrees.

Given a hollow edge-decorated hypertree $H$ with $k$ edges and $n$ vertices, the edges give a set $ \mathbb{S}$ of $k$ sets on $n$ vertices with a $\mathcal{S}'$-structure on each of them. Indeed, considering an edge $e$, the set of vertices of $e$ different from the petiole or the gap of $e$ is endowed with a $\mathcal{S}'$-structure, because the hollow hypertree is edge-decorated. Moreover, every vertex is the petiole of all edges containing it, except the closest from the hollow edge, which always exists as the hypertree is connected. Therefore every vertex is counted exactly once in the set $ \mathbb{S}$.Then, we call  \emph{set of vertices} $V$ the set of the $k$ previous sets. For all non-hollow edges $e$ in $H$, we can link the corresponding vertex of $V$ to the petiole of $e$ which belongs to another vertex of $V$: let us call $BT$ the result. The vertices that were in the hollow edge form the root of $BT$. As $H$ is a hypertree, for every vertex $v$ there is only one path between the hollow edge and $v$ thus, there is also one and only one path from the root to the vertices containing $v$ in $BT$: $BT$ is a box tree with root labelled by vertices of the hollow edge.

Conversely, given a box tree $BT$ with vertices labelled by one of the $k$ sets on $n$ vertices with a $\mathcal{S}'$-structure on each of them, we can call \emph{parent} of a vertex the label linked to it with an edge. Calling the set of labels $V$, we can define $E$ as a set of subsets of $V$ obtained by taking, for every vertex of $BT$ different from its root, the union of the set of its labels with its parent and by taking the set of labels of the root of $BT$ with a gap. We thus obtain a hollow hypertree. Moreover, the $\mathcal{S}'$-structure on every vertex of $BT$ induces an $\mathcal{S}$-edge-decoration of the hollow hypertree.
\end{proof}

\begin{exple} \label{exple box hollow} Figure \ref{exemple box tree} is the box tree associated to the hypertree of figure \ref{exemple hollow}, with the set of pointed sets $\{ \{\mathbf{1},2,3\}$,$\{6,\mathbf{7}\}$, $\{\mathbf{8}\} $, $\{\mathbf{9}\} $, $\{\mathbf{4}\} $, $\{\mathbf{5}\} \}$.

\end{exple}

To every species $F$, we can associate the following generating series: 
\begin{equation*}
C_F\left(x\right)=\sum_{n \geq 0} \lvert F\left(\{1,\ldots, n\}\right) \rvert \frac{x^n}{n!}. 
\end{equation*}

We can combine this series with the weight $t$.

\begin{exple} The generating series of species defined in Example \ref{exple espèces} are: 
\begin{itemize}
\item $C_{\assoc}\left(x\right)=\frac{1}{1-x}$,
\item $C_\mathcal{E}\left(x\right)=\exp\left(x\right)$,
\item $C_X\left(x\right)=x$,
\item $C_{\operatorname{Comm}}\left(x\right)=\exp\left(x\right)-1$.
\end{itemize}
\end{exple}

Let $\Ss$, $\Sp$, $\Sc$, $\Sa$ and $\Spa$ be the weighted generating series of $\hs$, $\hsp$, $\hsc$, $\hsa$ and $\hspa$. Let also $E_\mathcal{S}\left(k,n\right)$ be the number of sets of $k$ sets on $n$ vertices with a $\mathcal{S}'$-structure on each of them. By convention, we choose $E_\mathcal{S}\left(1,1\right)$ to be equal to $1$.

Using Proposition \ref{enum box tree}, the previous propositions \ref{btrhypertree} and \ref{bthhypertree} imply: 

\begin{thm} \label{cor S}
Given a species $\mathcal{S}$, the generating series of the species of edge-decorated rooted hypertrees, edge-decorated hollow hypertrees and edge-decorated hypertrees have the following expressions: 
\begin{equation} \label{formule Sp}
\Sp\left(x\right)= \frac{x}{t} + \frac{1}{t} \sum_{n \geq 2} \sum_{k=1}^{n-1} E_\mathcal{S}\left(k,n-1\right)\left( nt \right)^{k} \frac{x^n}{n!},
\end{equation}
\begin{equation}
\Sc\left(x\right)= \sum_{n \geq 1} \sum_{k=1}^{n} E_\mathcal{S}\left(k,n\right)\left(nt\right)^{k-1} \frac{x^n}{n!}
\end{equation}
and
\begin{equation} \label{formule S}
\Ss\left(x\right)= \frac{x}{t} + \sum_{n \geq 2} \sum_{k=1}^{n-1} E_\mathcal{S}\left(k,n-1\right)\left(nt\right)^{k-1} \frac{x^n}{n!},
\end{equation}
where $E_\mathcal{S}\left(k,n\right)$ is the number of sets of $k$ sets on $n$ vertices with a $\mathcal{S}'$-structure on each of them.

The generating series of the species of rooted edge-decorated edge-pointed hypertrees and the species of edge-decorated edge-pointed hypertrees are related to the species of rooted edge-decorated hypertrees and the species of hollow edge-decorated hypertrees by the following relations: 
\begin{equation}
\Spa\left(x\right)= t \Sp\left(x\right)\times \Sc\left(x\right),
\end{equation}

\begin{equation}
\Sa\left(x\right)= \Ss\left(x\right)+\Spa\left(x\right)-\Sp\left(x\right).
\end{equation}

These formulas give the cardinality of all types of decorated hypertrees in terms of decorated sets.
\end{thm}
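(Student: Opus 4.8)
The plan is to combine the box-tree decompositions of Propositions~\ref{btrhypertree} and~\ref{bthhypertree} with the count of box trees in Proposition~\ref{enum box tree} to get the three closed formulas, and then to read off the last two identities from the functional equation~\eqref{relpa} and the dissymmetry principle~\eqref{relpd}. For the first formula I would fix $n\ge 2$ and $1\le k\le n-1$ and count rooted $\mathcal{S}$-edge-decorated hypertrees on $\{1,\dots,n\}$ with exactly $k$ edges. By Proposition~\ref{btrhypertree} such an object is a triple $(r,\mathbb{S},BT)$: there are $n$ choices of root $r$; then $\mathbb{S}$ is a set of $k$ sets with an $\mathcal{S}'$-structure partitioning the remaining $n-1$ vertices, which gives $E_\mathcal{S}(k,n-1)$ choices; and $BT$ is a box tree on $k+1$ vertices whose root box is $\{r\}$, so $n_0=1$, and whose total label set has size $n$, which by Proposition~\ref{enum box tree} gives $n^{k-1}$ choices. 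Hence there are $E_\mathcal{S}(k,n-1)\,n^{k}$ such hypertrees. Together with the one-vertex hypertree (no edge, hence weight $t^{-1}$), and weighting a hypertree with $k$ edges by $t^{k-1}$, this gives
\begin{equation*}
\Sp(x)=\frac{x}{t}+\sum_{n\ge 2}\sum_{k=1}^{n-1}E_\mathcal{S}(k,n-1)\,n^{k}\,t^{k-1}\,\frac{x^{n}}{n!},
\end{equation*}
which is exactly the claimed expression for $\Sp$.

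For hollow hypertrees the bookkeeping is the same, using Proposition~\ref{bthhypertree}: a hollow $\mathcal{S}$-edge-decorated hypertree on $n$ vertices with $k$ edges is a pair $(\mathbb{S},BT)$ with $\mathbb{S}$ a set of $k$ $\mathcal{S}'$-decorated sets partitioning $\{1,\dots,n\}$ ($E_\mathcal{S}(k,n)$ choices) and $BT$ a box tree on the $k$ boxes of $\mathbb{S}$. The one new point is that the root box of $BT$ is not distinguished beforehand, and its cardinality varies; summing Proposition~\ref{enum box tree} over the choice of root box $p\in\mathbb{S}$ gives $\sum_{p\in\mathbb{S}}\lvert p\rvert\,n^{k-2}=n\cdot n^{k-2}=n^{k-1}$ box trees (for $k=1$ this is just the single box tree on one vertex). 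So there are $E_\mathcal{S}(k,n)\,n^{k-1}$ such hypertrees, and weighting by $t^{k-1}$ gives the claimed expression for $\Sc$.

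Since rooting an $\mathcal{S}$-edge-decorated hypertree on $\{1,\dots,n\}$ only amounts to marking one of its $n$ vertices, the number of (unrooted) $\mathcal{S}$-edge-decorated hypertrees on $n$ vertices with $k$ edges is $\frac{1}{n}E_\mathcal{S}(k,n-1)\,n^{k}=E_\mathcal{S}(k,n-1)\,n^{k-1}$; with the one-vertex term this is the claimed expression for $\Ss$ (equivalently one may use $\hsp=X\times\hsprime$, that is $\Sp(x)=x\,\Ss'(x)$). For the last two identities, equation~\eqref{relpa} reads $\hspa=\hsc\times t\,\hsp$ as weighted species, and since the partitional product of species corresponds to the product of exponential generating series this gives $\Spa(x)=t\,\Sp(x)\,\Sc(x)$; and the dissymmetry principle~\eqref{relpd}, $\hs+\hspa=\hsp+\hsa$, gives $\Ss(x)+\Spa(x)=\Sp(x)+\Sa(x)$, i.e. $\Sa(x)=\Ss(x)+\Spa(x)-\Sp(x)$.

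The only genuinely delicate step is the hollow case: Proposition~\ref{enum box tree} counts box trees with a prescribed root box, so one must sum over the possible root boxes, and it is the identity $\sum_{p\in\mathbb{S}}\lvert p\rvert=n$ that makes the cardinality of the root box cancel and produces the clean factor $n^{k-1}$. Everything else is careful handling of the weight $t^{\lvert E\rvert-1}$ and of the two exceptional one-vertex terms $x/t$ in $\Sp$ and $\Ss$.
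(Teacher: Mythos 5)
Your proof is correct and follows essentially the same route as the paper: the same triple/pair decompositions via box trees, the same summation over the root box using $\sum_{p}\lvert p\rvert=n$ in the hollow case, and the same translation of \eqref{relpa} and the dissymmetry principle for the last two identities. Your derivation of $\Ss$ by dividing the rooted count by $n$ is just the coefficient-wise form of the paper's ``integration of Formula \eqref{formule Sp}'' via $\hsp=X\times\hsprime$, so there is no substantive difference.
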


\begin{proof}
\begin{itemize}
\item According to Proposition \ref{btrhypertree}, the number of rooted hypertrees on $k$ edges and $n$ vertices is given by: 
\begin{equation*}
\left(\Sp\right)_{n,k}\left(x\right)= n \times E_\mathcal{S}\left(k,n-1\right) n^{k-1} t^{k-1},
\end{equation*}
where we have $n$ different ways to choose the root, $E_\mathcal{S}\left(k,n-1\right)$ ways to make a set of $k$ sets on the $n-1$ vertices left with a $\mathcal{S}'$-structure on each of them and $1 \times n^{k-1}$ ways to organize these sets into a box tree with its root fixed, according to Proposition \ref{enum box tree}.

\item Let us apply Proposition \ref{bthhypertree}. We have $E_\mathcal{S}\left(k,n\right)$ ways to make a set of $k$ sets on the $n$ vertices with a $\mathcal{S}'$-structure on each of them. Then consider that $n_{i_1}, \ldots, n_{i_k}$ are the cardinality of each of these sets. We choose the $j$-th of these sets as the root of a box tree on $k$ vertices and $n$ labels. The number of box trees obtained is $n_{i_j} \times n^{k-2}$, according to Proposition \ref{enum box tree}.
As $\sum_{j=1}^k n_{i_j} \times n^{k-2} = n^{k-1}$, we obtain the expected result.
\item We prove Equation \eqref{formule S} by an integration of Formula \eqref{formule Sp}. 
\item The last equations are obtained by translating in terms of generating series Equations \eqref{relpd} and \eqref{relpa} on species.
\qedhere
\end{itemize}
\end{proof}

\begin{exple} \begin{itemize}
\item Let $\assoc$ be the species of non-empty lists. The number of partitions of a set of cardinality $n$ in $k$ lists is $\binom{n-1}{k-1}\frac{n!}{k!}$.
Therefore, if we consider hypertrees decorated by $\widehat{\operatorname{Assoc}}$, the generating series of edge-decorated rooted hypertrees and edge-decorated hollow hypertrees are: 
\begin{equation*}
\Sp\left(x\right)= \frac{x}{t}+ \frac{1}{t} \sum_{n \geq 2} \sum_{k=1}^{n-1} \binom{n-2}{k-1} \frac{\left(n-1\right)!}{k!}\left(nt\right)^{k} \frac{x^n}{n!}
\end{equation*}
and
\begin{equation*}
\Sc\left(x\right)= \sum_{n \geq 1} \sum_{k=1}^{n} \binom{n-1}{k-1} \frac{n!}{k!}\left(nt\right)^{k-1} \frac{x^n}{n!}.
\end{equation*}

\item Let $\perm$ be the species of non-empty pointed sets. The number of partitions of a set of cardinality $n$ in $k$ pointed sets is $\binom{n}{k} \times k^{n-k}$. Indeed, we choose the pointed vertex in each set and then we map the other vertices to these pointed vertices.
Therefore, if we consider hypertrees decorated by $\widehat{\operatorname{Perm}}$, the generating series of edge-decorated rooted hypertrees and edge-decorated hollow hypertrees are: 
\begin{equation*}
\Sp\left(x\right)= \frac{x}{t} + \frac{1}{t} \sum_{n \geq 2} \sum_{k=1}^{n-1} \binom{n-1}{k} k^{n-1-k}\left(nt\right)^{k} \frac{x^n}{n!}
\end{equation*}
and
\begin{equation*}
\Sc\left(x\right)=\sum_{n \geq 1} \sum_{k=1}^{n} \binom{n}{k} k^{n-k}\left(nt\right)^{k-1} \frac{x^n}{n!}.
\end{equation*}

\item Let $\comm$ be the species of non-empty sets. The number of partitions of a set of cardinality $n$ in $k$ sets is given by $S\left(n,k\right)$, a Stirling number of the second type. 
Therefore, if we consider hypertrees decorated by $\widehat{\operatorname{Comm}}$, the generating series of edge-decorated rooted hypertrees and edge-decorated hollow hypertrees are: 
\begin{equation*}
\Sp\left(x\right)= \frac{x}{t} + \frac{1}{t} \sum_{n \geq 2} \sum_{k=1}^{n-1} S\left(n-1,k\right)\left(nt\right)^{k} \frac{x^n}{n!},
\end{equation*}
and
\begin{equation*}
\Sc\left(x\right)= \sum_{n \geq 1} \sum_{k=1}^{n} S\left(n,k\right)\left(nt\right)^{k-1} \frac{x^n}{n!}.
\end{equation*}
These decorated hypertrees are isomorphic to non-decorated hypertrees. 
This result was first proven by I. Gessel and L. Kalikow in \cite{GesKal}.

\item The number of partitions of a set of cardinality $n$ in $k$ cycles is given by $|s\left(n,k\right)|$, the absolute value of a Stirling number of the first kind. 
Therefore, if we consider hypertrees decorated by the species of cycles, the generating series of edge-decorated rooted hypertrees and edge-decorated hollow hypertrees are: 
\begin{equation*}
\Sp\left(x\right)= \frac{x}{t}+ \frac{1}{t} \sum_{n \geq 2} \sum_{k=1}^{n-1} |s\left(n-1,k\right)|\left(nt\right)^{k} \frac{x^n}{n!}
\end{equation*}
and
\begin{equation*}
\Sc\left(x\right)= \sum_{n \geq 1} \sum_{k=1}^{n} |s\left(n,k\right)|\left(nt\right)^{k-1} \frac{x^n}{n!}.
\end{equation*}

These series are the same as the one for the decoration by $\widehat{\operatorname{Lie}}$. We will see this last case in details in Section \ref{Lie}.
\end{itemize}
\end{exple}

	\subsection{Refinement} 
	
	\label{refinement}

Let us now introduce two weights: one on the set of box trees and the other on the set of rooted and hollow hypertrees.

\begin{defi} \label{WBT}
Let $BT=\left(L,V,R,E\right)$ be a box tree, we define the following weight on it: 
\begin{equation*}
W\left(BT\right)=\prod_{i \in L} x_i^{\lvert E^{-1}\left(i\right) \rvert},
\end{equation*}
where the $x_i$ are formal variables.
The power of $x_i$ is then the number of children of the label $i$, for every label $i$ of $L$.

\end{defi}

With this weight, the number of box trees is given by: 

\begin{prop} \label{enumx box tree}
The number of weighted box trees $N_{\{i_1, \ldots, i_p\};k,n}$ with $n$ labels $\{1, \ldots, n\}$, $k+1$ vertices, $k$ edges and its root labelled by $\{i_1, \ldots, i_p\}$ is: 
\begin{equation*}
N_{\{i_1, \ldots, i_p\};k,n}=\left(x_{i_1} + \cdots + x_{i_p}\right) \left( \sum_{i=1}^n x_i \right)^{k-1}.
\end{equation*}
\end{prop}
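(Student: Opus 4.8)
The plan is to imitate the proof of Proposition~\ref{enum box tree}, arguing by induction on the number $k$ of edges while keeping track of the weight $W$. As there, one fixes once and for all a partition of $\{1,\dots,n\}$ into $k+1$ nonempty blocks, one of which is designated as the root block $R=\{i_1,\dots,i_p\}$; write $B_1,\dots,B_k$ for the other blocks. Then $N_{\{i_1,\dots,i_p\};k,n}$ is the sum of $W(BT)$ over all box trees on this vertex set rooted at $R$, and the content of the statement is that this sum depends on $B_1,\dots,B_k$ only through their number $k$ and through $\sum_{i=1}^n x_i$. Set $a=\sum_{i\in R}x_i$ and $b=\sum_{i\notin R}x_i$, so that $a+b=\sum_{i=1}^n x_i$.

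For $k=1$ there is a single non-root block and a box tree is just a choice of its parent $E\in R$; the box tree with parent $i$ has weight $x_i$, so the total is $a=a(a+b)^{0}$. For the inductive step, assume the formula for all smaller numbers of edges and decompose a box tree according to the number $j$ ($1\le j\le k$) of children of $R$ and the $j$-subset $S\subseteq\{1,\dots,k\}$ indexing these child-blocks. Deleting $R$ and collapsing the $j$ child-blocks into a single new root block $R'=\bigcup_{l\in S}B_l$ yields a box tree $BT'$ on the $n-p$ labels $\bigcup_l B_l$, with $k-j+1$ vertices, and the weight factorizes as
\begin{equation*}
W(BT)=\Bigl(\prod_{i\in R}x_i^{c_i}\Bigr)\,W(BT'),
\end{equation*}
where $c_i$ counts the child-blocks of $R$ with landing label $i$ (so $\sum_{i\in R}c_i=j$): the edges of $BT$ not incident to $R$ are exactly the edges of $BT'$, carrying the same landing labels, all of which lie outside $R$, while the $j$ edges landing in $R$ contribute the first factor. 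This sets up a bijection between box trees rooted at $R$ whose children of $R$ are precisely the blocks $\{B_l\}_{l\in S}$ and the pairs consisting of a choice of landing label in $R$ for each $l\in S$ together with an arbitrary box tree $BT'$ with root $R'$ and non-root blocks $\{B_l\}_{l\notin S}$.

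Summing $\prod_{i\in R}x_i^{c_i}$ over the $p^{j}$ choices of landing labels gives $a^{j}$, and summing $W(BT')$ over all $BT'$ gives, by the induction hypothesis, $\bigl(\sum_{l\in S}\sum_{i\in B_l}x_i\bigr)b^{k-j-1}$ when $j<k$, and $1$ when $j=k$. Hence
\begin{equation*}
N_{\{i_1,\dots,i_p\};k,n}=a^{k}+\sum_{j=1}^{k-1}a^{j}b^{k-j-1}\sum_{\lvert S\rvert=j}\ \sum_{l\in S}\ \sum_{i\in B_l}x_i .
\end{equation*}
In the inner triple sum each $\sum_{i\in B_l}x_i$ occurs once per $j$-subset containing $l$, that is $\binom{k-1}{j-1}$ times, so it equals $\binom{k-1}{j-1}b$; therefore
\begin{equation*}
N_{\{i_1,\dots,i_p\};k,n}=\sum_{j=1}^{k}\binom{k-1}{j-1}a^{j}b^{k-j}=a(a+b)^{k-1}=\bigl(x_{i_1}+\cdots+x_{i_p}\bigr)\Bigl(\sum_{i=1}^n x_i\Bigr)^{k-1},
\end{equation*}
which is the weighted analogue of the identity $\sum_{j}\binom{k-1}{j-1}n_0^{j}(n-n_0)^{k-j}=n_0 n^{k-1}$ used for Proposition~\ref{enum box tree}.

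The routine parts (base case, binomial summation) are immediate; the point that needs care is the weight factorization, namely checking that collapsing the children of the root produces a genuine box tree, that $BT\mapsto(\text{landing labels},BT')$ is a bijection onto the stated set of pairs, and, crucially, that no landing label of a surviving edge lies in $R$, so that the two factors of $W(BT)$ really do separate. This is exactly the place where the specific combinatorics of box trees — edges joining a label to a vertex rather than a label to a label — must be invoked.
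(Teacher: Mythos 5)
Your proof is correct and follows essentially the same route as the paper's: induction on $k$, decomposing by the set of children of the root, collapsing those children into a new root to invoke the induction hypothesis, and finishing with the binomial identity $\sum_{j}\binom{k-1}{j-1}a^{j}b^{k-j}=a(a+b)^{k-1}$. The only differences are cosmetic improvements — you treat the $j=k$ term explicitly (the paper's intermediate sums stop at $j=k-1$, a small index slip) and you spell out why the weight factorizes across the collapse, which the paper leaves implicit.
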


\begin{proof}
We prove this statement by induction on $k$. Let us call $V_1$ the root and $V_2, \ldots, V_k$ the other vertices.

For $k=1$, there is only one vertex attached to a label of the root. Thus, the weight of box trees on two vertices with root $V_1$ is $\sum_{i \in V_1} x_i$.

If this statement holds for all $q<k$, we want the number $N_{\{i_1, \ldots, i_p\};k,n}$ of box trees on $k+1$ vertices. It can be obtained by summing on the number of vertices attached to the root, called its \emph{children}.

If the root has $j$ children $V_{a_1}, \ldots, V_{a_j}$, each of them is attached to a label of the root: this gives a term $\left(x_{i_1}+ \cdots + x_{i_p}\right)^j$. Moreover, cutting the root and linking together the children of the root, we obtain a box tree on $k+1-j$ vertices and with the same labels except the ones of $V_0$ which is deleted, with root having the labels of $V_{a_1} \cup \ldots \cup V_{a_j}$. Using the induction hypothesis, we obtain: 

\begin{equation*}
N_{\{i_1, \ldots, i_p\};k,n} = \sum_{j=1}^{k-1}\left(x_{i_1}+ \cdots + x_{i_p}\right)^j \sum_{0 <i_1 < \cdots < i_j} \left( \sum_{i \in V_{i_1} \cup \ldots \cup V_{i_j}} x_i \right) \left( \sum_{i \notin V_0} x_i \right)^{k-j-1}.
\end{equation*}

In the second sum $ \sum_{0 <i_1 < \cdots < i_j}\left(\sum_{i \in V_{i_1} \cup \ldots \cup V_{i_j}} x_i\right)$, every $x_\alpha \in V_i$ appears $\binom{k-1}{j-1}$ times, for $i \geq 2$. Therefore, we have: 

\begin{align*}
N_{\{i_1, \ldots, i_p\};k,n} &= \sum_{j=1}^{k-1} \binom{k-1}{j-1}\left(\sum_{i \in V_0} x_i\right)^j \left(\sum_{i \notin V_0} x_i\right)^{k-j} \\ & =\left(\sum_{i \in V_0} x_i\right) \left(\sum_{l=0}^k \sum_{i \in V_l} x_i\right)^{k-1}. \qedhere
\end{align*}
\end{proof}

The weight on the set of hypertrees on $n$ vertices is defined as follow.

\begin{defi}
Let $H=\left(V,E\right)$ be a rooted or a hollow hypertree on $n$ vertices, we define the following weight on it: 
\begin{equation*}
W\left(H\right)=\prod_{i \in V} x_i^{p\left(i\right)},
\end{equation*}
where $p\left(i\right)$ is the number of edges whose petiole is $i$.
\end{defi}

These weights are related by the following proposition: 

\begin{thm} \label{decomp root}
Given a species $\mathcal{S}$, every weighted rooted hypertree on $n$ vertices with $k$ edges, whose edges are decorated by $\mathcal{S}$ can be decomposed as a triple $\left(r, \mathbb{S}, BT\right)$ where: 
\begin{itemize}
\item $r$ is the root of the hypertree, 
\item $ \mathbb{S}$ is a set of $k$ sets on $n-1$ vertices with a $\mathcal{S}'$-structure on each of them,
\item and $BT$ is a weighted box tree on $k+1$ vertices with root labelled by $r$ and each of the other vertices labelled by one of the $k$ sets of the previous point.
\end{itemize}
\end{thm}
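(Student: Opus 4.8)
The plan is to reuse, without change, the bijection built in the proof of Proposition~\ref{btrhypertree}, and then to verify that once both sides are equipped with their weights (Definitions~\ref{WBT} and the weight $W(H)=\prod_{i\in V}x_i^{p(i)}$), this bijection becomes weight-preserving. Recall that to a rooted $\mathcal{S}$-edge-decorated hypertree $H$ with root $r$, $k$ edges and $n$ vertices, the bijection of Proposition~\ref{btrhypertree} associates the root $r$, the set $\mathbb{S}$ of the $k$ sets of non-petiole vertices of the edges of $H$ (each carrying its induced $\mathcal{S}'$-structure), and the box tree $BT$ whose set of vertices is $\{r\}$ together with the $k$ elements of $\mathbb{S}$, in which the vertex $v_e$ coming from an edge $e$ of $H$ is joined by an edge of $BT$ to the petiole of $e$. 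So nothing remains to be checked about the underlying set bijection; only the exponents of the formal variables need to be compared.

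First I would record that the label set $L$ of $BT$ coincides with the vertex set $V$ of $H$: by the counting argument already given in the proof of Proposition~\ref{btrhypertree}, every vertex of $H$ occurs exactly once among $\{r\}$ and the sets of $\mathbb{S}$. Hence $W(BT)$ and $W(H)$ are products over the same index set, and it suffices to match their exponents label by label. Then, for a fixed $i\in V$, I would observe that, by the very construction of $BT$, the set $E^{-1}(i)$ of vertices of $BT$ whose parent label is $i$ is canonically identified, via $e\mapsto v_e$, with the set of edges $e$ of $H$ whose petiole is $i$; indeed $e\mapsto v_e$ is a bijection from the edge set of $H$ onto the set of non-root vertices of $BT$, and $v_e$ is attached in $BT$ precisely to the petiole of $e$. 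Therefore $\lvert E^{-1}(i)\rvert = p(i)$ for every $i\in V$, whence $W(BT)=\prod_{i\in L}x_i^{\lvert E^{-1}(i)\rvert}=\prod_{i\in V}x_i^{p(i)}=W(H)$, which is exactly the assertion of the theorem.

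I do not expect a real obstacle. The only bookkeeping that deserves care is the treatment of the edges of $H$ that contain the root: for such an edge the petiole is $r$ itself, so the exponent of $x_r$ on the hypertree side equals the number of edges containing $r$, and this must agree with the number of children of the label $r$ in $BT$ — which it does, since in Proposition~\ref{btrhypertree} the root $\{r\}$ of $BT$ is precisely the vertex to which all those edges are attached. The inverse direction requires no additional argument, because on both sides the weight is expressed through the map $E$ in the same way. Finally, combined with the weighted enumeration of box trees in Proposition~\ref{enumx box tree}, this decomposition will yield the refined (multivariate) generating series, but that refinement is a separate consequence and is not part of the present statement.
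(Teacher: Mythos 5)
Your proposal is correct and follows the same route as the paper: the paper's proof also simply reuses the bijection of Proposition~\ref{btrhypertree} and notes that the weights agree because the petiole data is preserved (the exponent of $x_i$ counts edges with petiole $i$ on one side and children of the label $i$ on the other). Your write-up merely spells out this verification in more detail than the paper does.
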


\begin{proof}
We only have to prove the compatibility with weights of the bijection of the proof of Proposition \ref{btrhypertree}. This compatibility is due to the fact that the set of petioles of both rooted hypertree and box tree related in the theorem is the same.
\end{proof}

\begin{exple}
The weight of the rooted hypertree of Example \ref{exemple rooted}, which corresponds to the weight of the box tree of Example \ref{exple box rooted}, is: $x_3^3 x_6^2 x_9 x_{12}$.
\end{exple}

\begin{thm} \label{decomp hol}
Given a species $\mathcal{S}$, every weighted hollow hypertree on $n$ vertices with $k$ edges, whose edges are decorated by $\mathcal{S}$ can be decomposed as a pair $\left(\mathbb{S}, BT\right)$ where: 
\begin{itemize}
\item $ \mathbb{S}$ is a set of $k$ sets on $n$ vertices with a $\mathcal{S}'$-structure on each of them,
\item and $BT$ is a weighted box tree on $k$ vertices with each vertex labelled by one of the $k$ sets of the previous point.
\end{itemize}
\end{thm}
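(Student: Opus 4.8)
The plan is to reuse verbatim the set-theoretic bijection constructed in the proof of Proposition \ref{bthhypertree} and to check only that it is compatible with the two weights introduced in \S\ref{refinement}, exactly as in the proof of Theorem \ref{decomp root}. So the first step is to recall that bijection: a hollow edge-decorated hypertree $H$ with $k$ edges on $n$ vertices is sent to the set $\mathbb{S}$ of the $k$ edge-decorations --- each edge $e$ contributing the $\mathcal{S}'$-structure carried by its vertices other than the petiole (or the gap, for the hollow edge) --- together with the box tree $BT$ whose vertices are these $k$ sets, whose root is the set coming from the hollow edge, and whose edge attached to a non-root vertex $v$ points to the petiole of the edge of $H$ that produced $v$.

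The key step is then a \emph{petiole/children dictionary}: under this bijection, for every label $i \in \{1,\dots,n\}$, the edges of $H$ whose petiole is $i$ correspond exactly to the vertices of $BT$ whose parent label is $i$, i.e.\ to the children of $i$ in $BT$. Indeed, every non-hollow edge $e$ of $H$ gives a non-root vertex $v_e$ of $BT$, and by construction the parent label $E(v_e)$ is precisely the petiole of $e$; conversely the only edge of $H$ not accounted for this way is the hollow edge, whose petiole is the gap $\#$, which is not among the labels $\{1,\dots,n\}$ and therefore contributes no factor to $W(H)$. Hence $p(i) = \lvert E^{-1}(i)\rvert$ for every $i$, so $\prod_{i} x_i^{p(i)} = \prod_{i} x_i^{\lvert E^{-1}(i)\rvert}$, that is $W(H) = W(BT)$; the decorations $\mathbb{S}$ are untouched. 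This shows the bijection of Proposition \ref{bthhypertree} is weight-preserving, which is the assertion.

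I do not expect a genuine obstacle here; the only point requiring care is the bookkeeping around the gap --- making sure the hollow edge's petiole drops out of the weight and that no label of $\{1,\dots,n\}$ is double-counted. This is already handled by the observation used in the proof of Proposition \ref{bthhypertree}, namely that each vertex of $H$ is the petiole of all edges containing it except the one closest to the hollow edge, so the dictionary above is well defined and the verification goes through cleanly.
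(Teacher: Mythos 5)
Your proposal is correct and follows exactly the paper's route: the paper also proves this by observing that the bijection of Proposition \ref{bthhypertree} is weight-compatible because the petioles of the hypertree coincide with the parent labels of the box tree (as in the proof of Theorem \ref{decomp root}). Your version merely spells out the petiole/children dictionary and the bookkeeping around the gap in more detail than the paper does, which is fine.
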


\begin{proof}
In the same way as we proved the compatibility with weights of the bijection of the proof of Proposition \ref{btrhypertree} in the case of rooted weighted hypertrees, we can prove that the proof of Proposition \ref{bthhypertree} is compatible with weights.
\end{proof}

\begin{exple}
The weight of the rooted hypertree of figure \ref{exemple hollow}, which corresponds to the weight of the box tree of figure \ref{exemple box tree}, is: $x_2 x_3 x_4 x_7^2$.
\end{exple}

Let $\Ssw$, $\Spw$, $\Scw$, $\Saw$ and $\Spaw$ be the weighted generating series of $\hs$, $\hsp$, $\hsc$, $\hsa$ and $\hspa$. Let also be $E_\mathcal{S}\left(k,n\right)$ the number of sets of $k$ sets on $n$ vertices with a $\mathcal{S}'$-structure on each of them.

Using Proposition \ref{enumx box tree}, the previous theorems \ref{decomp hol} and \ref{decomp root} imply: 

\begin{cor} \label{formule Spx}
The generating series of the species of edge-decorated rooted hypertrees and edge-decorated hollow hypertrees have the following expressions: 
\begin{equation}
\Sp\left(x\right)= \frac{x}{t} + \sum_{n \geq 2} \sum_{k=1}^{n-1} \left(x_1 + \cdots+ x_n\right) E_\mathcal{S}\left(k,n-1\right) \bigl( \left(x_1 + \cdots+ x_n\right)t \bigr)^{k-1} \frac{x^n}{n!}
\end{equation}
and
\begin{equation}
\Sc\left(x\right)= \sum_{n \geq 1} \sum_{k=1}^{n} E_\mathcal{S}\left(k,n\right) \bigl(\left(x_1 + \cdots + x_n\right) t \bigr)^{k-1} \frac{x^n}{n!}.
\end{equation}
\end{cor}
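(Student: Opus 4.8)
The plan is to lift the counting arguments behind Theorem~\ref{cor S} to the weighted setting, using the weighted enumeration of box trees from Proposition~\ref{enumx box tree} in place of Proposition~\ref{enum box tree}. The two statements Theorem~\ref{decomp root} and Theorem~\ref{decomp hol} already provide weight-compatible bijections between weighted decorated rooted (resp. hollow) hypertrees and triples $(r,\mathbb{S},BT)$ (resp. pairs $(\mathbb{S},BT)$), so the only remaining task is to read off the weighted generating series from these decompositions.

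First I would treat $\Sp$. Fix $n$ and $k$, and a rooted hypertree on the vertex set $\{1,\ldots,n\}$ with $k$ edges. Choosing the root $r$ and then a set $\mathbb{S}$ of $k$ sets with an $\mathcal{S}'$-structure on each on the remaining $n-1$ vertices contributes, after summing over choices, the factor $E_\mathcal{S}(k,n-1)$, and summing over the $n$ possible roots will, after the weight bookkeeping below, be absorbed into a factor $(x_1+\cdots+x_n)$. Indeed, by Proposition~\ref{enumx box tree} the total $W$-weight of box trees on $k+1$ vertices with root labelled by a fixed set $p_0\subseteq\{1,\ldots,n\}$ and the other $k$ vertices labelled by the sets of $\mathbb{S}$ is $\bigl(\sum_{i\in p_0}x_i\bigr)\bigl(\sum_{i=1}^n x_i\bigr)^{k-1}$; here the root singleton $p_0=\{r\}$ gives $x_r$, and summing over $r\in\{1,\ldots,n\}$ turns the prefactor into $x_1+\cdots+x_n$. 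Remembering that the weight $W_t$ contributes $t^{k-1}$ (one less than the number of edges), the coefficient of $x^n/n!$ is exactly $(x_1+\cdots+x_n)\,E_\mathcal{S}(k,n-1)\,\bigl((x_1+\cdots+x_n)t\bigr)^{k-1}$; summing over $k$ from $1$ to $n-1$ and over $n\geq2$, and adding the single-vertex term $x/t$ as in Theorem~\ref{cor S}, gives the stated formula for $\Sp(x)$.

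For $\Sc$ I would argue similarly from Theorem~\ref{decomp hol}. Fix $n$ and $k$ and a hollow hypertree on $\{1,\ldots,n\}$ with $k$ edges; choosing $\mathbb{S}$ contributes $E_\mathcal{S}(k,n)$. Now the box tree has only $k$ vertices and its root is one of the $k$ sets of $\mathbb{S}$, say with label-set $p_j$; by Proposition~\ref{enumx box tree} the weighted count of such box trees on $k$ vertices with root $p_j$ is $\bigl(\sum_{i\in p_j}x_i\bigr)\bigl(\sum_{i=1}^n x_i\bigr)^{k-2}$. Summing over the choice $j$ of which set is the root, $\sum_j \sum_{i\in p_j}x_i=\sum_{i=1}^n x_i$ (each vertex lies in exactly one set of $\mathbb{S}$), so the total is $\bigl(\sum_{i=1}^n x_i\bigr)^{k-1}$, mirroring the identity $\sum_j n_{i_j}n^{k-2}=n^{k-1}$ used in the unweighted proof. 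Together with the factor $t^{k-1}$ from $W_t$, this yields the coefficient $E_\mathcal{S}(k,n)\bigl((x_1+\cdots+x_n)t\bigr)^{k-1}$ of $x^n/n!$, and summing over $1\leq k\leq n$ and $n\geq 1$ gives the claimed formula for $\Sc(x)$.

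The only genuinely delicate point is the case analysis at the extremes: one must check that the $n=1$ (single vertex, $k=0$ or $k=1$) contributions match the conventions already fixed before Theorem~\ref{cor S}, in particular the convention $E_\mathcal{S}(1,1)=1$ and the isolated term $x/t$ in $\Sp$, and that the weighted box-tree formula of Proposition~\ref{enumx box tree} is being applied with the correct shift between ``$k$ edges'' and ``$k+1$ vertices'' in the rooted case versus ``$k$ vertices'' in the hollow case. Once these bookkeeping issues are pinned down, the corollary follows formally, since the last equations of Theorem~\ref{cor S} (the ones relating $\Spa,\Sa$ to $\Sp,\Sc$) are not part of the present statement. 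I expect the weight-tracking in the root-summation step for $\Sp$ and the root-choice summation for $\Sc$ to be where care is most needed, but neither presents a conceptual obstacle.
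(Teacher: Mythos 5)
Your proposal is correct and follows exactly the route the paper intends: the corollary is stated in the paper with no separate proof beyond the remark that it follows from Proposition \ref{enumx box tree} together with the weight-compatible decompositions of Theorems \ref{decomp root} and \ref{decomp hol}, and your write-up supplies precisely that computation, including the root-summation step $\sum_j\bigl(\sum_{i\in p_j}x_i\bigr)\bigl(\sum_i x_i\bigr)^{k-2}=\bigl(\sum_i x_i\bigr)^{k-1}$ mirroring the unweighted case. The bookkeeping of the $t$-power and the $n=1$ conventions is handled correctly, so nothing is missing.
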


This corollary will be used in the next section where we specialize the results for operads $\lie$ and $\prelie$.

\section{Two cases of edge-decorated hypertrees}

	\subsection{$\widehat{\prelie}$-decorated hypertrees}
	
	In their article \cite{JMcCM}, C. Jensen, J. McCammond and J. Meier introduce a weight on the set of hypertrees. We prove here that this weight is related to a decoration of edges by the linear species $\widehat{\prelie}$, whose differential is $\prelie$, which associates to a finite set $I$ the set of labelled rooted trees with labels in $I$.	
	
		\subsubsection{Application of the enumeration of decorated hypertrees}	
	
	Applying the results of the previous section on counting edge-decorated hypertrees, we obtain the following proposition: 
	
	\begin{prop} \label{sprelie}
	The generating series of the species of rooted hypertrees decorated by the linear species $\widehat{\prelie}$ is given by: 
	\begin{equation*}
S^r_{\widehat{\prelie}}= \frac{x}{t} +\sum_{n \geq 2}	n\left(tn+n-1\right)^{n-2} \frac{x^n}{n!}.
	\end{equation*}
	This equation is the specialization in $x_i=1$ of the one counting rooted weighted hypertrees in \cite[Theorem 3.9]{JMcCM}.
	
	The generating series of the species of hypertrees decorated by $\widehat{\prelie}$ is given by: 
	\begin{equation*}
S_{\widehat{\prelie}}=x+\sum_{n \geq 2}\left(tn+n-1\right)^{n-2} \frac{x^n}{n!}.
	\end{equation*}
	
		The generating series of the species of hollow hypertrees decorated by $\widehat{\prelie}$ is given by: 
	\begin{equation*}
S^h_{\widehat{\prelie}}=\sum_{n \geq 1}\left(tn+n\right)^{n-1} \frac{x^n}{n!}.
	\end{equation*}
	
	\end{prop}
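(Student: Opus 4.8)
The plan is to apply Theorem \ref{cor S} directly to the linear species $\mathcal{S}=\widehat{\prelie}$, so the whole argument reduces to two steps: computing the integers $E_{\widehat{\prelie}}(k,n)$, and then evaluating the resulting sums in closed form. Since the differential of $\widehat{\prelie}$ is $\prelie$ and $\prelie(I)$ is the set of rooted trees on $I$, the number $E_{\widehat{\prelie}}(k,n)$ is exactly the number of forests of $k$ rooted trees on the vertex set $\{1,\dots,n\}$. I would recall, or reprove in one line, that
\begin{equation*}
E_{\widehat{\prelie}}(k,n)=\binom{n}{k}\,k\,n^{\,n-k-1}.
\end{equation*}
This follows from Cayley's formula together with Lagrange inversion: the exponential generating series $T(x)=\sum_{m\geq1}m^{m-1}x^m/m!$ of rooted trees satisfies $T=xe^{T}$, so forests of $k$ rooted trees are counted by $T(x)^k/k!$ and $[x^n]T(x)^k=\tfrac{k}{n}[u^{n-k}]e^{nu}=\tfrac{k}{n}\cdot\tfrac{n^{n-k}}{(n-k)!}$, whence $E_{\widehat{\prelie}}(k,n)=\tfrac{n!}{k!}[x^n]T(x)^k=\binom{n}{k}k\,n^{n-k-1}$. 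The boundary value $E_{\widehat{\prelie}}(n,n)=1$ is consistent with the convention $E_{\widehat{\prelie}}(1,1)=1$ fixed before Theorem \ref{cor S}.

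Next I would substitute this value into the three generating series of Theorem \ref{cor S}. For the hollow series this gives
\begin{equation*}
S^h_{\widehat{\prelie}}(x)=\sum_{n\geq1}\Bigl(\sum_{k=1}^{n}\binom{n}{k}k\,n^{n-k-1}(nt)^{k-1}\Bigr)\frac{x^n}{n!};
\end{equation*}
inside the bracket one has $n^{n-k-1}(nt)^{k-1}=n^{n-2}t^{k-1}$, so the bracket equals $n^{n-2}\sum_{k=1}^{n}\binom{n}{k}k\,t^{k-1}=n^{n-2}\cdot\tfrac{d}{dt}(1+t)^n=n^{n-2}\cdot n(1+t)^{n-1}=(nt+n)^{n-1}$, which is the claimed expression. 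For the rooted and unrooted series the same computation applies after setting $m=n-1$, the only additional ingredient being the identity $\sum_{k=1}^{m}\binom{m}{k}k\,y^{k-1}=m(1+y)^{m-1}$ applied with $y=nt/m$: this choice is exactly what makes the spurious powers of $m$ coming from $E_{\widehat{\prelie}}(k,n-1)=\binom{m}{k}k\,m^{m-k-1}$ cancel, leaving the inner sum equal to $nt\,(n-1+nt)^{n-2}$ in the rooted case (so that, after the global factor $1/t$ in formula \eqref{formule Sp}, the $n$-th term is $n(nt+n-1)^{n-2}x^n/n!$) and equal to $(n-1+nt)^{n-2}$ in the unrooted case. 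Collecting terms and handling the $n=1$ summand separately (a one-vertex hypertree has no edge) yields the three displayed formulas.

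Finally, to justify the remark that $S^r_{\widehat{\prelie}}$ is the $x_i=1$ specialization of \cite[Theorem 3.9]{JMcCM}, I would use Corollary \ref{formule Spx}: setting every $x_i=1$ there replaces $x_1+\dots+x_n$ by $n$ and returns precisely formula \eqref{formule Sp}, so the refined species count of $\widehat{\prelie}$-decorated rooted hypertrees matches the weighted-hypertree count of \cite{JMcCM} term by term once the weight variable $t$ is identified with the one used in \cite{JMcCM}.

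The computation is short, so there is no real obstacle; the only places where care is needed are the bookkeeping of the index shift $n\mapsto n-1$ inside $E_{\widehat{\prelie}}(k,n-1)$, which is present in the rooted and unrooted cases but absent in the hollow one, and the choice of the substitution $y=nt/m$ in the binomial identity, which is what collapses the various powers of $n$ (resp. $n-1$) into the single base $nt+n$ (resp. $nt+n-1$). One should also check the degenerate indices $k=1$ and $k=n$ against the convention $E_{\widehat{\prelie}}(1,1)=1$, so that the leading term of each series (in particular the contribution of the one-vertex hypertree) comes out correctly.
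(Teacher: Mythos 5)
Your proposal is correct and follows essentially the same route as the paper: substitute the forest-counting formula $E_{\prelie}(k,n)=\binom{n}{k}k\,n^{n-1-k}=\binom{n-1}{k-1}n^{n-k}$ into Theorem \ref{cor S} and collapse the inner sum by the binomial theorem (the paper cites this count from the literature and re-indexes $k\mapsto k-1$ before applying the plain binomial theorem, whereas you derive it by Lagrange inversion and use the equivalent derivative identity $\sum_k\binom{m}{k}k\,y^{k-1}=m(1+y)^{m-1}$ --- a purely cosmetic difference). The only point worth noting is the constant term of $S_{\widehat{\prelie}}$: Theorem \ref{cor S} and the relation $\hsp=X\times\hsprime$ both give $x/t$ rather than the $x$ stated in the proposition, an inconsistency already present in the paper, so your remark about treating the $n=1$ summand separately is the right instinct.
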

	
	\begin{proof}
	
There is a classical formula for the number of forest of $k$ trees on $n$ vertices, which can be found in the book of M. Aigner and G. Ziegler \cite{PftB}: 
\begin{equation} \label{eprelie}
E_{\prelie}\left(k,n\right)=\binom{n}{k} k \times n^{n-1-k}= \binom{n-1}{k-1} n^{n-k}.
\end{equation}
	
	Using the expressions of Theorem \ref{cor S}, we obtain: 
\begin{equation*}
S^r_{\widehat{\prelie}}\left(x\right)= \frac{x}{t} + \sum_{n \geq 2} \sum_{k=1}^{n-1} n \binom{n-2}{k-1} \left(n-1\right)^{n-1 -k} \left(nt\right)^{k-1} \frac{x^n}{n!}
\end{equation*}
and
\begin{equation*}
S^h_{\widehat{\prelie}}\left(x\right)= \sum_{n \geq 1} \sum_{k=1}^{n} \binom{n-1}{k-1} n^{n-1-\left(k-1\right)} \left(nt\right)^{k-1} \frac{x^n}{n!}.
\end{equation*}

Re-indexing the sums, it gives: 
\begin{equation*}
S^r_{\widehat{\prelie}}= \frac{x}{t} + \sum_{n \geq 2} \sum_{k=0}^{n-2} n \binom{n-2}{k} \left(n-1\right)^{n-2-k}\left(nt\right)^{k} \frac{x^n}{n!}
\end{equation*}
and
\begin{equation*}
S^h_{\widehat{\prelie}}\left(x\right)= \sum_{n \geq 1} \sum_{k=0}^{n-1} \binom{n-1}{k} n^{n-1-k} \left(nt\right)^{k} \frac{x^n}{n!}.
\end{equation*}
	
Using the binomial theorem, we obtain the expected results for $S^r_{\widehat{\prelie}}$ and $S^h_{\widehat{\prelie}}$. The series $S_{\widehat{\prelie}}$ is then obtained by the use of the first equation of Proposition \ref{décomp}.
	\end{proof}
	
		\subsubsection{Link with 2-coloured rooted trees}		

We now draw the link between trees whose edges can be either blue ($0$) or red ($1$) and edge-decorated hypertrees to compute the generating series of edge-pointed and rooted edge-pointed decorated hypertrees.
	
	\begin{defi}
	A \emph{2-coloured rooted tree} is a rooted tree $\left(V,E\right)$, where $V$ is the set of vertices and $E \subseteq V \times V$ is the set of edges, together with a map $\varphi$ from $E$ to $\{0,1\}$. It is equivalent to the data of a tree $\left(V,E\right)$ and a partition $E_0 \cup E_1$ of $E$, with $E_0 \cap E_1 = \emptyset$.
	\end{defi}

	An example of 2-coloured rooted tree is presented in figure \ref{2-col}. The edges of $E_1$ are dashed whereas the edges of $E_0$ are plain.
	
	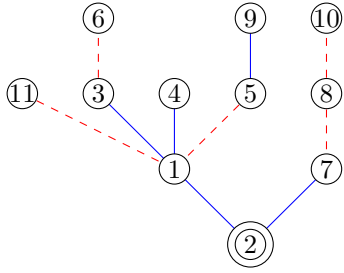
\begin{figure} 
	\centering 
	\begin{tikzpicture}[scale=1]
\draw[blue] (3,0) -- (2,1);
\draw[red, dashed] (0,2) -- (2,1);
\draw[blue] (1,2) -- (2,1);
\draw[red, dashed] (1,2) -- (1,3);
\draw[blue] (2,2) -- (2,1);
\draw[red, dashed] (3,2) -- (2,1);
\draw[blue] (3,2) -- (3,3);
\draw[blue] (3,0) -- (4,1);
\draw[red, dashed] (4,2) -- (4,1);
\draw[red, dashed] (4,2) -- (4,3);
\draw[black, fill=white] (2,1) circle (0.2);
\draw[black, fill=white] (3,0) circle (0.3);
\draw[black, fill=white] (3,0) circle (0.2);
\draw[black, fill=white] (1,2) circle (0.2);
\draw[black, fill=white] (2,2) circle (0.2);
\draw[black, fill=white] (3,2) circle (0.2);
\draw[black, fill=white] (1,3) circle (0.2);
\draw[black, fill=white] (4,1) circle (0.2);
\draw[black, fill=white] (4,2) circle (0.2);
\draw[black, fill=white] (3,3) circle (0.2);
\draw[black, fill=white] (4,3) circle (0.2);
\draw[black, fill=white] (0,2) circle (0.2);
\draw (2,1) node{$1$};
\draw (3,0) node{$2$};
\draw (1,2) node{$3$};
\draw (2,2) node{$4$};
\draw (3,2) node{$5$};
\draw (1,3) node{$6$};
\draw (4,1) node{$7$};
\draw (4,2) node{$8$};
\draw (3,3) node{$9$};
\draw (4,3) node{$10$};
\draw (0,2) node{$11$};
\end{tikzpicture}
\caption{\label{2-col} A 2-coloured rooted tree. }
\end{figure}

	\begin{prop} \label{3.4}
	The species of hollow hypertrees decorated by $\widehat{\prelie}$ is isomorphic to the species of 2-coloured rooted trees.
	\end{prop}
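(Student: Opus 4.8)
The plan is to exhibit an explicit bijection, natural in the finite set $I$, between $\widehat{\prelie}$-edge-decorated hollow hypertrees on $I$ and $2$-coloured rooted trees on $I$, and then to observe that naturality upgrades it to an isomorphism of species. The natural starting point is the box tree decomposition just established. By Proposition \ref{bthhypertree}, a hollow hypertree on $n$ vertices with $k$ edges and edges decorated by $\mathcal{S}=\widehat{\prelie}$ is the same datum as a set $\mathbb{S}$ of $k$ sets partitioning the $n$ vertices, each carrying an $\mathcal{S}'=\prelie$-structure — that is, a rooted tree — together with a box tree $BT$ on these $k$ blocks. Unwinding the definition of a box tree, this amounts to: a partition of $I$ into blocks $B_1,\dots,B_k$; a rooted tree structure on each $B_i$, with root $\rho_i$; a rooted tree structure on the set of blocks; and, for each non-root block $B_j$ with parent block $B_{p(j)}$, a chosen element $\ell_j\in B_{p(j)}$.

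I would then assemble these data into a $2$-coloured rooted tree $T$ on $I$ as follows: keep all edges of the rooted tree carried by each block $B_i$ and colour them blue; for each non-root block $B_j$ add a red edge joining $\rho_j$ to $\ell_j$; and declare $\rho_0$, the root of the root block, to be the root of $T$. A count gives $(n-k)+(k-1)=n-1$ edges, and the graph is connected, so $T$ is a tree; orienting away from $\rho_0$ together with the colour map makes it a $2$-coloured rooted tree. For the inverse, start from a $2$-coloured rooted tree $(T,\rho_0)$ on $I$, take the connected components of the subgraph formed by the blue edges only — these are the blocks — and root each component at the vertex of the component closest to $\rho_0$ in $T$, which is well defined. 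The red edges then each have exactly one endpoint (their child in $T$) equal to a block-root, and recording for each such edge the other endpoint recovers both the rooted tree structure on the set of blocks and the choices $\ell_j$; one checks directly that the two constructions are mutually inverse, hence that the assignment above is a bijection.

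Since every step is phrased purely in terms of the combinatorial data — partitions, rooted trees, and incidences — and makes no use of the names of the elements of $I$, the bijection commutes with relabellings $I\to I'$, so it defines an isomorphism of species.

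The points needing care, rather than a real obstacle, are the bookkeeping around the hollow edge and the root (the petiole of the hollow edge is the gap $\#$, so its non-petiole vertex set consists of all its real vertices, exactly matching the root block of the box tree); the degenerate cases of a one-vertex block (a rooted tree with no edges, contributing no blue edge) and of a hypertree reduced to a single edge or a single vertex; and the verification that passing to connected components of blue edges is precisely inverse to "colour all within-block edges blue", in particular that the root of each blue component is unambiguously its vertex nearest the global root.
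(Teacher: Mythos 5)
Your construction is correct and is essentially the paper's own proof: your blue edges are exactly the edges of the rooted trees given by the $\prelie$-decoration (the paper's $E_0$), each red edge joins the root $\rho_j$ of the tree in an edge to that edge's petiole $\ell_j$ (the paper's $E_1$), and the global root and the inverse via blue connected components coincide with the paper's. Routing through Proposition \ref{bthhypertree} is only a repackaging of the same data that the paper reads directly off the hollow hypertree, so the two arguments produce the identical bijection.
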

	
	\begin{proof}
	
A hollow hypertree with edges decorated by $\widehat{\prelie}$ is a hollow hypertree in which, for all edges $e$, the vertices of $e$ different from the gap or the petiole form a rooted tree.

Let us consider a hollow hypertree $H$ decorated by $\widehat{\prelie}$ on vertex set $V$. We call $E_0$, the set of edges between elements of $V$ in the rooted trees obtained from the decoration by $\widehat{\prelie}$. The graph $\left(V, E_0\right)$ is then a forest of trees obtained by deleting the edges of the hypertree $H$ and forgetting the roots. For any edge $e$ of $H$, we write $r_e$ for the root of the rooted tree in $e$ and $p_e$ for the petiole of $e$. Let $E_1$ be the set of edges between $r_e$ and $p_e$ for all edges $e$ of $H$. By definition of the sets, the intersection of $E_0$ with $E_1$ is empty. Moreover, to every path in $H$ corresponds a path in $\left(V, E_0 \cup E_1\right)$. As $H$ is a hypertree, the graph $\left(V, E_0 \cup E_1\right)$ is a tree $T$. We root that tree in the root $r$ of the tree in the hollow edge of $H$: $T$ is then a 2-coloured tree rooted in $r$. 

Conversely, let $T=\left(V, E_0 \cup E_1\right)$ be a 2-coloured rooted tree. The graph $\left(V, E_0\right)$ is a forest of trees: we can root these trees in their closest vertex from the root. Let us call $T_1, \ldots, T_n$ this forest, where $T_1$ is the tree rooted in the root of $T$. For all $i$ between $2$ and $n$, there is one vertex of $V$ closer from the root of $T$ than the root of $T_i$: we call this vertex $p_i$. Then, we consider the hypergraph whose set of vertices is $V$, with edges containing the vertices of $T_1$ or the vertices of a $T_i$ and $p_i$ for all $i$ between $2$ and $n$. Adding the edges of every $T_i$, we obtain a hypergraph decorated by $\widehat{\prelie}$. Moreover, paths in $T$ and in the hypergraph are the same: the hypergraph is then a hypertree.
	\end{proof}
	
	\begin{exple} The hollow tree with edges decorated by $\widehat{\prelie}$ associated to the 2-coloured rooted tree of figure \ref{2-col} is the hollow tree of figure \ref{htbij}. 
		\end{exple}
		
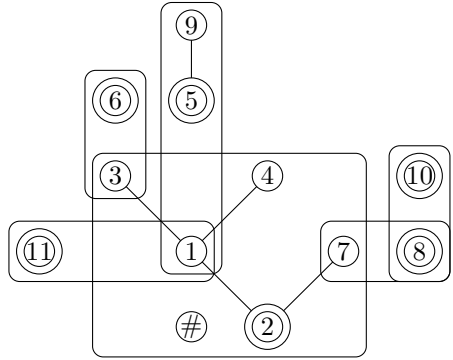
\begin{figure} 
\centering 
\begin{tikzpicture}[scale=1]
\draw[black, rounded corners] (0.7,-0.4) rectangle(4.3,2.3);
\draw[black, rounded corners] (-0.4,0.6) rectangle(2.3,1.4);
\draw[black, rounded corners] (1.6,0.7) rectangle(2.4,4.3);
\draw[black, rounded corners] (0.6,1.7) rectangle(1.4,3.4);
\draw[black, rounded corners] (3.7,0.6) rectangle(5.4,1.4);
\draw[black, rounded corners] (4.6,0.6) rectangle(5.4,2.4);
\draw (3,0) -- (2,1);
\draw (1,2) -- (2,1);
\draw (3,2) -- (2,1);
\draw (2,4) -- (2,3);
\draw (3,0) -- (4,1);
\draw[black, fill=white] (2,1) circle (0.2);
\draw[black, fill=white] (3,0) circle (0.3);
\draw[black, fill=white] (3,0) circle (0.2);
\draw[black, fill=white] (1,2) circle (0.2);
\draw[black, fill=white] (2,4) circle (0.2);
\draw[black, fill=white] (2,0) circle (0.2);
\draw[black, fill=white] (3,2) circle (0.2);
\draw[black, fill=white] (1,3) circle (0.3);
\draw[black, fill=white] (1,3) circle (0.2);
\draw[black, fill=white] (4,1) circle (0.2);
\draw[black, fill=white] (5,1) circle (0.3);
\draw[black, fill=white] (5,1) circle (0.2);
\draw[black, fill=white] (2,3) circle (0.3);
\draw[black, fill=white] (2,3) circle (0.2);
\draw[black, fill=white] (5,2) circle (0.3);
\draw[black, fill=white] (5,2) circle (0.2);
\draw[black, fill=white] (0,1) circle (0.3);
\draw[black, fill=white] (0,1) circle (0.2);
\draw (2,1) node{$1$};
\draw (2,0) node{$\#$};
\draw (3,0) node{$2$};
\draw (1,2) node{$3$};
\draw (2,3) node{$5$};
\draw (3,2) node{$4$};
\draw (1,3) node{$6$};
\draw (4,1) node{$7$};
\draw (5,1) node{$8$};
\draw (2,4) node{$9$};
\draw (5,2) node{$10$};
\draw (0,1) node{$11$};
\end{tikzpicture}
\caption{\label{htbij} A hollow hypertree decorated by $\widehat{\prelie}$.}
\end{figure}

Remark that this proposition gives the expression for $S^h_{\widehat{\prelie}}$ found previously in Proposition \ref{sprelie}.

Let us now link rooted edge-pointed hypertrees decorated by $\widehat{\prelie}$ with 2-coloured trees. We consider that edges of $E_0$ are blue and edges of $E_1$ are red.
\begin{prop} \label{2colpa}
The set of rooted edge-pointed hypertrees decorated by $\widehat{\prelie}$ is in one-to-one correspondence with the set of 2-coloured rooted trees whose root has exactly one blue child (and possibly some red children).
\end{prop}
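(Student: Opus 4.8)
The plan is to adapt, to the rooted edge-pointed setting, the bijection between hollow $\widehat{\prelie}$-decorated hypertrees and 2-coloured rooted trees built in the proof of Proposition~\ref{3.4}, treating the pointed edge and the root in a way that forces the constraint on the root of the tree. For an edge $e$ of a rooted edge-pointed $\widehat{\prelie}$-decorated hypertree, write $p_e$ for the petiole of $e$ and $r_e$ for the root of the rooted tree decorating $e$, so that this rooted tree lives on $V_e^l = e \setminus \{p_e\}$. Starting from such a hypertree $H$ on a set $V$ of $n$ vertices, with root $s$ inside the pointed edge $a$ --- so that $s = p_a$, and in fact $s = p_e$ for every edge $e$ containing it --- I would colour blue all the edges occurring inside the rooted trees decorating the edges of $H$; for every edge $e \neq a$ add a red edge between $p_e$ and $r_e$; and for the pointed edge $a$ add a \emph{blue} edge between $s$ and $r_a$. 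The edge count and connectedness argument from the proof of Proposition~\ref{3.4} then shows the result is a tree on $V$, which I root at $s$. As $s$ lies in no $V_e^l$, its only blue neighbour is $r_a$, while its other neighbours are the vertices $r_b$ attached by a red edge, one for each edge $b \neq a$ through $s$; hence $s$ has exactly one blue child and possibly several red children, giving a 2-coloured rooted tree of the announced type.

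For the inverse, given a 2-coloured rooted tree $T$ on $V$ rooted at $r$ whose root has exactly one blue child $c$, I would delete the red edges, obtaining a forest of blue trees, each rooted at its vertex closest to $r$ in $T$. Writing $B_0$ for the component containing $r$ --- which also contains $c$, with $c$ the unique child of $r$ in $B_0$ --- I form the pointed edge $a = V(B_0)$, with petiole $r$, decorated by $B_0 \setminus \{r\}$ rooted at $c$; and for every other blue component $B$, whose root reaches its parent $\pi$ in $T$ along a red edge, I form the edge $V(B) \cup \{\pi\}$, with petiole $\pi$, decorated by $B$. Declaring $r$ the root produces a rooted edge-pointed $\widehat{\prelie}$-decorated hypertree on $V$, and a routine check shows this is inverse to the forward map and natural in $V$, so that the two species are in fact isomorphic.

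I expect the only delicate point to be the asymmetric handling of the pointed edge: colouring the new edge between $s$ and $r_a$ blue, rather than red as for every other edge, is exactly what makes the root of $T$ acquire one and only one blue child, and one should keep track of this carefully to see that the forward and inverse maps really compose to the identity. The rest --- the edge-count/connectedness argument, and the bookkeeping of petioles --- is routine, and the construction refines the decomposition of relation~\eqref{relpa}, which presents a rooted edge-pointed decorated hypertree as a hollow one (the component of the pointed edge) glued along its gap to a rooted one (the component of the root).
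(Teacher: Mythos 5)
Your proposal is correct and follows essentially the same route as the paper: blue edges from the $\prelie$-decorations plus one blue edge joining the root to the root of the tree in the pointed edge, red edges from petioles to tree-roots for the other edges, and the inverse obtained by deleting red edges and reassembling the blue components. The observation that the root lies in no $V_e^l$ and hence acquires exactly one blue child is precisely the point the paper's proof relies on as well.
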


\begin{proof}
Given a rooted edge-pointed hypertrees $H$ decorated by $\widehat{\prelie}$ with $k$ edges and $n$ vertices, we consider its pointed set of vertices $\left(r,V\right)$. We draw in blue all the edges obtained from the $\prelie$-structure in the edges of $H$: it gives a forest of $k+1$ trees with $n-k-1$ blue edges. We link $r$ with the root of the tree in the pointed edge of $H$ with a blue edge: it gives a forest of $k$ trees with $n-k$ blue edges. Now, for all non-pointed edges $e$, we link the petiole of $e$ with the root of the tree in $e$. We thus obtain a 2-coloured graph $G$ on $n$ vertices with $n-k + k-1$ edges. Let us show that this graph is connected. A path between the root and a vertex $x$ in $H$ is a sequence of paths in trees of the edges of $H$ from the root to a vertex which is the petiole of an edge and paths from the petiole of an edge to the root of the corresponding trees. As these paths also exist in $G$, we obtain a 2-coloured rooted trees whose root has exactly one blue child and possibly some red children.

Conversely, given such a 2-coloured rooted tree $T$, we delete red edges of $T$. We root all connected component in the vertices that was the closest in $T$ from the root of $T$. For every connected component $C_k$, we put $C_k$ and the parent of the root of $C_k$, if it exists, in the edge of a hypergraph $H$. We root the hypergraph in the root of $T$, point the edge containing the blue edge adjacent to the root and delete this blue edge. We thus obtain a rooted edge-pointed hypertrees decorated by $\widehat{\prelie}$. Indeed, the hypergraph is rooted and edge-pointed and every edge contains a rooted tree and a vertex so that the hypergraph can be seen as decorated by $\widehat{\prelie}$. Moreover, every vertex connected by an edge in $T$ are in the same edge in $H$ so $H$ is connected. Finally, a cycle in $H$ would come from a cycle in $T$, which does not exist so $H$ is a hypertree. The weight of the hypertree corresponds to the number of red edges.
\end{proof}

		\subsubsection{Computation of generating series of edge-pointed hypertrees}
		
	We now use the analogy with 2-coloured rooted trees to compute the value of the other generating series: 
		\begin{prop}		
		The generating series of the species of rooted edge-pointed hypertrees decorated by $\widehat{\prelie}$ is given by: 
		\begin{equation*}
S^{re}_{\widehat{\prelie}}=x+\sum_{n \geq 2} n\left(n+tn-1\right)^{n-3}\left(n-1\right)\left(1+2t\right) \frac{x^n}{n!}.
	\end{equation*}

		The generating series of the species of edge-pointed hypertrees decorated by $\widehat{\prelie}$ is given by: 
		\begin{equation*}
S^e_{\widehat{\prelie}}=x+\sum_{n \geq 2}\left(n+tn-1\right)^{n-3}\left(n-1\right)\left(1+tn\right) \frac{x^n}{n!}.
	\end{equation*}
		\end{prop}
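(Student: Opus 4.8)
The plan is to put both series into closed form and extract the coefficients by Lagrange inversion; throughout I abbreviate $S_{\widehat{\prelie}},S^{r}_{\widehat{\prelie}},S^{h}_{\widehat{\prelie}},S^{e}_{\widehat{\prelie}},S^{re}_{\widehat{\prelie}}$ to $S,S^{r},S^{h},S^{e},S^{re}$ and set $c=1+t$. By Proposition~\ref{sprelie}, $S^{h}(x)=\sum_{n\ge1}(cn)^{n-1}\frac{x^{n}}{n!}$, so that $\tau:=c\,S^{h}(x)=\sum_{n\ge1}n^{n-1}\frac{(cx)^{n}}{n!}=T(cx)$, where $T$ is the tree function $T(y)=\sum_{n\ge1}n^{n-1}\frac{y^{n}}{n!}$, characterised by $T(y)=y\,e^{T(y)}$; equivalently $\tau=cx\,e^{\tau}$, i.e.\ $cx=\tau e^{-\tau}$ and $S^{h}=\tau/c$. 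Translating equation~\eqref{relhp} into generating series (with $C_{\comm}(u)=e^{u}-1$) gives $t\,S^{r}=x\,e^{\,tS^{h}}$; this is also the combinatorial content of Proposition~\ref{2colpa}, where $S^{re}$ is read off from suitable $2$-coloured rooted trees.

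I would then compute $S^{re}$. By Theorem~\ref{cor S}, $S^{re}=t\,S^{r}\times S^{h}=x\,e^{\,tS^{h}}\cdot S^{h}$; substituting $S^{h}=\tau/c$, $x=\tau e^{-\tau}/c$ and using $\tfrac{t}{c}-1=-\tfrac{1}{c}$ makes everything collapse to
\begin{equation*}
S^{re}=\frac{\tau^{2}}{c^{2}}\,e^{-\tau/c},\qquad \tau=T(cx).
\end{equation*}
Since $cx=\tau e^{-\tau}$, Lagrange inversion applies: with $H(\tau)=\tau^{2}e^{-\tau/c}$, so that $H'(\tau)=e^{-\tau/c}\,\tau\,(2-\tau/c)$, one obtains
\begin{equation*}
[x^{n}]\,S^{re}=\frac{c^{\,n-2}}{n}\,[\tau^{n-1}]\bigl(H'(\tau)e^{n\tau}\bigr)=\frac{c^{\,n-2}}{n}\left(2\,\frac{(n-\tfrac1c)^{\,n-2}}{(n-2)!}-\frac1c\,\frac{(n-\tfrac1c)^{\,n-3}}{(n-3)!}\right).
\end{equation*}
Pulling $(n-\tfrac1c)^{\,n-3}/(n-2)!$ out of the parenthesis leaves the factor $n\,(2-\tfrac1c)$, and the identities $n-\tfrac1c=\tfrac{n+tn-1}{c}$ and $2-\tfrac1c=\tfrac{1+2t}{c}$ make all powers of $c$ cancel, so that $[x^{n}]S^{re}=\frac{(n+tn-1)^{\,n-3}(1+2t)}{(n-2)!}$; multiplying by $n!$ yields the coefficient $n(n-1)(n+tn-1)^{\,n-3}(1+2t)$ of $x^{n}/n!$, which is the claimed value for $n\ge2$ (at $n=2$ the apparent negative exponent is harmless, being cancelled by the factor $1+2t$).

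For $S^{e}$ I would use the dissymmetry identity $S^{e}=S+S^{re}-S^{r}$ of Theorem~\ref{cor S} together with $S$ and $S^{r}$ from Proposition~\ref{sprelie}: for $n\ge2$ the coefficient of $x^{n}/n!$ equals
\begin{equation*}
(n+tn-1)^{\,n-2}+n(n-1)(1+2t)(n+tn-1)^{\,n-3}-n(n+tn-1)^{\,n-2},
\end{equation*}
and factoring out $(n-1)(n+tn-1)^{\,n-3}$ turns the remaining bracket into $-(n+tn-1)+n(1+2t)=1+tn$, giving $(n-1)(n+tn-1)^{\,n-3}(1+tn)$, as stated.

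All the real content is in the closed form $S^{re}=\tfrac{\tau^{2}}{c^{2}}e^{-\tau/c}$; the one thing that needs care is the Lagrange-inversion bookkeeping, in particular checking that the powers of $c=1+t$ cancel exactly, handling the degenerate case $n=2$, and matching the low-degree terms with the normalisations fixed in Proposition~\ref{sprelie}. I do not expect any conceptual obstacle.
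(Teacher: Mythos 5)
Your argument is correct, but it reaches the formula for $S^{re}_{\widehat{\prelie}}$ by a genuinely different route than the paper. The paper's proof is combinatorial: it invokes the bijection of Proposition \ref{2colpa} between rooted edge-pointed $\widehat{\prelie}$-decorated hypertrees and $2$-coloured rooted trees whose root has exactly one blue child, counts those directly (a factor $n$ for the root, a factor $k\,t^{k-1}$ for a root with $k$ children of which one is blue, and $\binom{n-2}{k-1}\bigl(\left(n-1\right)\left(1+t\right)\bigr)^{n-1-k}$ for the forest of $2$-coloured subtrees), and finishes with the binomial theorem; $S^{e}_{\widehat{\prelie}}$ is then obtained by the dissymmetry principle. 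You bypass the $2$-coloured trees entirely: starting from the functional identities $tS^{r}=x\,e^{tS^{h}}$ (equation \eqref{relhp}) and $S^{re}=tS^{r}\times S^{h}$ (Theorem \ref{cor S}), you recognise $(1+t)S^{h}$ as the tree function evaluated at $(1+t)x$, reduce $S^{re}$ to the closed form $\tau^{2}(1+t)^{-2}e^{-\tau/(1+t)}$ with $\tau=T\bigl((1+t)x\bigr)$, and extract coefficients by Lagrange inversion. I have checked the residue computation, the factorisation giving $n(2-\tfrac{1}{1+t})$, and the cancellation of the powers of $1+t$; they are correct, and your derivation of $S^{e}_{\widehat{\prelie}}$ by dissymmetry coincides with the paper's. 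Your route buys a clean closed form for $S^{re}_{\widehat{\prelie}}$ in terms of the tree function and is mechanical once set up; the paper's route is more elementary (binomial theorem only) and explains the factor $\left(n-1\right)\left(1+2t\right)$ combinatorially. One small caveat, which concerns the statement rather than your proof: the product $tS^{r}\times S^{h}$ begins at $x^{2}$, so the linear term $x$ displayed in the Proposition cannot arise from this identity (and indeed there are no edge-pointed rooted hypertrees on one vertex, by definition); your computation correctly gives a vanishing coefficient of $x$, and the discrepancy is a normalisation slip in the paper's low-degree terms, consistent with the analogous mismatch already present in Proposition \ref{sprelie}, not a gap in your argument.
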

		
		\begin{proof}
		\begin{itemize}
\item We linked rooted edge-pointed hypertrees decorated by $\widehat{\prelie}$ with 2-coloured rooted trees whose root has exactly one blue child (and possibly some red children) in Proposition \ref{2colpa}. We count the 2-coloured rooted trees using the number of red edges as a weight on 2-coloured trees.

Then, there is $n$ ways to choose the root of the 2-coloured rooted tree. If the root have $k$ children, $k-1$ of them are red, there is $k$ ways to choose the blue one and the set of the children forms a forest of $k$ 2-coloured rooted trees on $n-1$ vertices.

Let us count the forests of $k$ 2-coloured rooted trees on $n-1$ vertices. We use Formula \eqref{eprelie}: 
\begin{equation*}
E_{\prelie}\left(k,n-1\right)= \binom{n-2}{k-1} \left(n-1\right)^{n-1-k}.
\end{equation*}
There is $n-1-k$ edges in the forest and each of them is either blue or red.
Hence the number of forests of $k$ 2-coloured rooted trees on $n-1$ vertices is $\binom{n-2}{k-1} \times\left(n-1\right)^{n-1-k}\left(1+t\right)^{n-1-k}$.

Finally, the $n$-th coefficient of the series is: 
\begin{equation*}
\left(S^{re}_{\widehat{\prelie}}\right)_n = n \sum_{k=1}^{n-1} k t^{k-1} \binom{n-2}{k-1} \bigl(\left(n-1\right)\left(1+t\right) \bigr) ^{n-1-k}.
\end{equation*}

A quick computation using re-indexation and the binomial theorem gives the result.
		
\item We obtain the second generating series thanks to the dissymetry principle of Proposition \ref{principe de dissymétrie}.\qedhere
		\end{itemize}
		\end{proof}
	
		\subsubsection{Refinement}

We now use weights defined in Part \ref{refinement} on rooted and hollow hypertrees. With these weights, we will obtain the same results as in \cite{JMcCM}.
		
	\begin{prop}
	The generating series of the weighted species of hollow hypertrees decorated by $\widehat{\prelie}$ is given by: 
	\begin{equation*}
S^h_{\widehat{\prelie}}=x+\sum_{n \geq 2}\bigl(\left(x_1+\cdots + x_n\right)t+n\bigr)^{n-1} \frac{x^n}{n!}.
	\end{equation*}
	
		The generating series of the weighted species of rooted hypertrees decorated by $\widehat{\prelie}$ is given by: 	
		\begin{equation*}
S^r_{\widehat{\prelie}}=x+\sum_{n \geq 2}	\left(x_1+\cdots + x_n\right) \bigl(\left(x_1+\cdots + x_n\right)t+n-1\bigr)^{n-2} \frac{x^n}{n!}.
	\end{equation*}
	\end{prop}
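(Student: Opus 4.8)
The plan is to read off both series directly from Corollary~\ref{formule Spx}, specialized to the linear species $\widehat{\prelie}$. Since the differential of $\widehat{\prelie}$ is $\prelie$, the quantity $E_{\mathcal{S}}(k,n)$ appearing there becomes the number $E_{\prelie}(k,n)$ of forests of $k$ rooted trees on $n$ vertices, for which I would invoke the classical formula~\eqref{eprelie}, namely $E_{\prelie}(k,n)=\binom{n-1}{k-1}n^{n-k}$ (equivalently $E_{\prelie}(k,n-1)=\binom{n-2}{k-1}(n-1)^{n-1-k}$).

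For the hollow series, substituting this into the expression for $\Sc$ in Corollary~\ref{formule Spx} makes the coefficient of $\frac{x^n}{n!}$ equal to $\sum_{k=1}^{n}\binom{n-1}{k-1}n^{n-k}\bigl((x_1+\cdots+x_n)t\bigr)^{k-1}$; after the change of index $j=k-1$ this is a binomial expansion that collapses to $\bigl((x_1+\cdots+x_n)t+n\bigr)^{n-1}$, and separating off the $n=1$ term (which equals $x$) gives the announced formula. For the rooted series the computation is the same in spirit: substituting $E_{\prelie}(k,n-1)$ into the expression for $\Sp$, shifting the summation index, and applying the binomial theorem collapses the inner sum to $\bigl((x_1+\cdots+x_n)t+n-1\bigr)^{n-2}$, leaving the prefactor $(x_1+\cdots+x_n)$ and the low-degree term.

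I do not expect a genuine obstacle here; the only care needed is bookkeeping — after the shift $j=k-1$ one must check that the range of summation matches the exponent ($n-1$ in the hollow case, $n-2$ in the rooted case) so that $\sum_j\binom{m}{j}a^{m-j}b^j=(a+b)^m$ applies without boundary corrections, and the $n=1$ term should be treated separately. As an alternative I could bypass the box-tree count and instead refine the bijection of Proposition~\ref{3.4} with $2$-coloured rooted trees, tracking at each vertex the number of incident edges of which it is the petiole; but routing the argument through Corollary~\ref{formule Spx} is by far the shortest.
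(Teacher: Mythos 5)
Your proposal is correct and is essentially identical to the paper's proof, which simply invokes Corollary~\ref{formule Spx}; you merely make explicit the substitution of $E_{\prelie}(k,n)=\binom{n-1}{k-1}n^{n-k}$ and the index shift plus binomial theorem that collapse the inner sums, exactly as in the unweighted computation of Proposition~\ref{sprelie}.
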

	Hence the weighted rooted hypertrees enumerated in \cite[Theorem 3.9]{JMcCM} are in bijection with the $\widehat{\prelie}$-decorated rooted hypertrees.
	
	\begin{proof}
We use the formulas of Corollary \ref{formule Spx}. 
\end{proof}

We can separate the second $n$ into $y_1, \ldots,y_n$ by introducing the following weight on $\widehat{\prelie}$-decorated hollow hypertrees. First, we need some definitions: 

\begin{defi}
Given a rooted forest and a vertex $v$ of it, a \emph{child} $v'$ of the vertex $v$ is a vertex $v'$ linked to $v$ by an edge and such that $v$ is on the path from $v'$ to the root. The degree of $v$ in the rooted forest is the number of children of $v$ in it.
\end{defi}

Using this definition, we can define the following weight on the set of forests of rooted trees on a set of vertices $V$.

\begin{defi} \label{WF}
Let $F$ be a forest of rooted trees on a set of vertices $V$, we define the following weight on it: 
\begin{equation*}
W\left(F\right)=\prod_{i \in V} y_i^{s\left(i\right)},
\end{equation*}
where $s\left(i\right)$ is the number of children of the vertex $i$.
\end{defi}

The bijection between $\widehat{\prelie}$-decorated hollow hypertrees and box trees and set of decorated sets of Proposition \ref{bthhypertree} and the weights introduced in Definitions \ref{WBT} and \ref{WF} give a weight on the set of $\widehat{\prelie}$-decorated hollow hypertrees: 

\begin{defi}
Let $H$ be a $\widehat{\prelie}$-decorated hollow hypertree on a set of vertices $V$, we define the following weight on it: 
\begin{equation*}
W\left(T\right)=\prod_{i \in V} x_i^{p\left(i\right)} y_i^{s\left(i\right)},
\end{equation*}
where $p\left(i\right)$ is the number of edges of $H$ whose petiole is $i$ and $s\left(i\right)$ is the number of child of $v$ in the tree of the $\widehat{\prelie}$ decoration.
\end{defi}

Using the bijection with box trees and sets of decorated sets, we can compute the generating series of the weighted species of $\widehat{\prelie}$-decorated hollow hypertrees.

\begin{prop}
	The generating series of the weighted species of hollow hypertrees whose edges are decorated by $\widehat{\prelie}$ is given by: 
	\begin{equation*}
S^c_t=x+\sum_{n \geq 2}\bigl(\left(x_1+\cdots + x_n\right)t+y_1 + \cdots y_n\bigr)^{n-1} \frac{x^n}{n!}.
	\end{equation*}
\end{prop}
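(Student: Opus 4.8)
The plan is to feed the weighted box-tree decomposition of Theorem~\ref{decomp hol} into the weighted box-tree count of Proposition~\ref{enumx box tree}, the only genuinely new input being a $y$-refinement of the forest-counting formula~\eqref{eprelie}.

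First I would unwind Theorem~\ref{decomp hol} for $\mathcal{S}=\widehat{\prelie}$. Since the derivative $\prelie$ of $\widehat{\prelie}$ assigns to a finite set the rooted trees on it, a $\widehat{\prelie}$-decorated hollow hypertree $H$ on $\{1,\dots,n\}$ with $k$ edges corresponds to a pair $(\mathbb{S},BT)$, where $\mathbb{S}$ is a forest of $k$ rooted trees on $\{1,\dots,n\}$ and $BT$ is a box tree on $k$ vertices whose vertices are the blocks of $\mathbb{S}$; moreover $W_t(H)=t^{k-1}$. The weight $W(H)=\prod_i x_i^{p(i)}y_i^{s(i)}$ factors accordingly: the weight-compatibility in Theorem~\ref{decomp hol} gives $\prod_i x_i^{p(i)}=W(BT)$, while $\prod_i y_i^{s(i)}$ is the weight of Definition~\ref{WF} on the underlying rooted forest $\mathbb{S}$. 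Hence the coefficient of $\frac{x^n}{n!}$ in $S^c_t$ equals
\[
\sum_{k\ge 1}t^{k-1}\sum_{\mathbb{S}\colon\, k\text{ trees}}\Bigl(\prod_i y_i^{s(i)}\Bigr)\Bigl(\sum_{BT}W(BT)\Bigr),
\]
the inner sum running over box trees on the blocks of $\mathbb{S}$ together with a choice of root block.

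Next I would carry out the box-tree sum: for fixed $\mathbb{S}$ with blocks $B_1,\dots,B_k$, Proposition~\ref{enumx box tree} (applied with $k-1$ edges) contributes $\bigl(\sum_{i\in B_j}x_i\bigr)\bigl(\sum_{i=1}^n x_i\bigr)^{k-2}$ for the root choice $B_j$, so summing over $j$ and using $B_1\sqcup\cdots\sqcup B_k=\{1,\dots,n\}$ gives $\bigl(\sum_{i=1}^n x_i\bigr)^{k-1}$ (with the value $1$ when $k=1$). Thus the coefficient becomes $\sum_{k\ge 1}\bigl((x_1+\cdots+x_n)t\bigr)^{k-1}\sum_{\mathbb{S}\colon k\text{ trees}}\prod_i y_i^{s(i)}$.

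The step I expect to be the main obstacle is the refined forest-counting identity
\[
\sum_{\mathbb{S}\colon\, k\text{ trees}}\prod_{i=1}^n y_i^{s(i)}=\binom{n-1}{k-1}\,(y_1+\cdots+y_n)^{\,n-k},
\]
which reduces to~\eqref{eprelie} at $y_i=1$. I would prove it by attaching a formal super-root $0$ to the $k$ roots of $\mathbb{S}$: this puts such forests in bijection with trees $T$ on $\{0,1,\dots,n\}$ with $\deg_T(0)=k$, for which $s(i)=\deg_T(i)-1$ when $i\ge 1$; the degree-refined Cayley formula $\sum_T\prod_v t_v^{\deg_T(v)-1}=(t_0+t_1+\cdots+t_n)^{n-1}$ (standard, via Pr\"ufer words), with $t_i=y_i$ for $i\ge 1$, then yields the claim upon extracting the coefficient of $t_0^{k-1}$. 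Substituting this back and applying the binomial theorem gives $\bigl((x_1+\cdots+x_n)t+y_1+\cdots+y_n\bigr)^{n-1}$ for $n\ge 2$, while for $n=1$ the single hollow hypertree, of weight $1$, accounts for the term $x$. (One could instead run the computation through the isomorphism of Proposition~\ref{3.4} with $2$-coloured rooted trees, where summing over the colour of each child produces the substitution $z_i=x_it+y_i$ directly and only the one-component case $k=1$ of the identity above is needed.)
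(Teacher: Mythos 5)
Your proof is correct and follows essentially the same route as the paper: the paper combines Corollary \ref{formule Spx} (i.e.\ the weighted box-tree decomposition of Theorem \ref{decomp hol} plus Proposition \ref{enumx box tree}) with the $y$-refined forest count $\binom{n-1}{k-1}(y_1+\cdots+y_n)^{n-k}$, which it simply cites from Stanley's Proposition 5.3.2 rather than re-deriving via the super-root/Pr\"ufer argument. If anything, your version is more careful in tracking how the weight $\prod_i x_i^{p(i)}y_i^{s(i)}$ splits between the box tree and the underlying forest, a point the paper leaves implicit.
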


\begin{proof}
In Stanley's book \cite[proposition 5.3.2]{stan}, the formula for the number of forest of $k$ trees on $n$ vertices is given by: 
\begin{equation}
E_{\prelie}\left(t,k,n\right)=\binom{n-1}{k-1}\left(\sum_{j =1}^n \sum_{i \in V_j} y_i\right)^{n-k}.
\end{equation}
We use Corollary \ref{formule Spx} to obtain the expected results.
\end{proof}

		\subsubsection{Computation of cycle index series of hypertrees decorated by $\widehat{\prelie}$}

The reader may consult Appendix \ref{Annexe ind cycl} for basic definitions on cycle index series. We denote the cycle index series of usual species in the same way as the species itself. We compute the cycle index series of hypertrees decorated by $\widehat{\prelie}$. We do not write the argument of the cycle index series $\left(t, p_1, p_2, \ldots\right)$ in this subsection.
		
		\begin{prop}\label{3.13}
		
The cycle index series of hollow hypertrees decorated by $\widehat{\prelie}$ is given by: 
				\begin{equation}
	Z^h_{\widehat{\prelie}}= \frac { 1}{1+t}\prelie \circ\left(1+t\right) p_1.
\end{equation}
\end{prop}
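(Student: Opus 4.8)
The plan is to route the computation through Proposition \ref{3.4}, which identifies $\widehat{\prelie}$-decorated hollow hypertrees with $2$-coloured rooted trees. First I would check that this identification is weight-preserving: a hollow hypertree $H$ with $k$ edges has weight $t^{k-1}$, and under the bijection of Proposition \ref{3.4} each of the $k-1$ non-hollow edges of $H$ produces exactly one red edge (the one joining the petiole of that edge to the root of the rooted tree decorating it), while the hollow edge produces none; hence the associated $2$-coloured rooted tree carries $k-1$ red edges and its natural weight $t^{\#\{\text{red edges}\}}$ matches $W_t(H)$. Thus $Z^h_{\widehat{\prelie}}$ is the cycle index series of the species $\mathcal{B}$ of $2$-coloured rooted trees weighted by $t$ to the number of red edges.

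Next I would write down a functional equation for $\mathcal{B}$. A $2$-coloured rooted tree is a root together with a (possibly empty) set of children, each child being a $2$-coloured rooted subtree attached to the root by an edge that is blue (weight $1$) or red (weight $t$); this gives an isomorphism of weighted species $\mathcal{B}\simeq X\cdot\bigl(\mathcal{E}\circ((1+t)\mathcal{B})\bigr)$, where $(1+t)\mathcal{B}$ denotes $\mathcal{B}$ equipped with an extra blue/red choice of weight $1$ or $t$ on the connecting edge. (The same equation can alternatively be extracted from Corollary \ref{coreq} with $\mathcal{S}=\widehat{\prelie}$, using $X+X\comm\circ(tY)=X\cdot(\mathcal{E}\circ(tY))$, the identity $\mathcal{E}\circ(A+B)=(\mathcal{E}\circ A)\cdot(\mathcal{E}\circ B)$, and $\prelie\simeq X\cdot(\mathcal{E}\circ\prelie)$.) At the level of cycle index series this equation has a unique solution, computed recursively in the degree in the $p_i$, with linear part $p_1$.

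Then I would verify that $\tfrac{1}{1+t}\,\prelie\circ((1+t)p_1)$ solves the same equation. Setting $\mathcal{V}=\prelie\circ((1+t)X)$ and using $\prelie\simeq X\cdot(\mathcal{E}\circ\prelie)$ together with the fact that plethysm in the inner variable is a ring morphism, one gets
\[
\mathcal{V}=\prelie\circ((1+t)X)=(1+t)X\cdot\bigl(\mathcal{E}\circ(\prelie\circ((1+t)X))\bigr)=(1+t)X\cdot(\mathcal{E}\circ\mathcal{V}),
\]
so $\mathcal{B}':=\tfrac{1}{1+t}\mathcal{V}$ satisfies $\mathcal{B}'=X\cdot(\mathcal{E}\circ((1+t)\mathcal{B}'))$; the division by $1+t$ is legitimate because $\prelie$ has no constant term, so every coefficient of $\mathcal{V}$ vanishes at $t=-1$ and is divisible by $1+t$. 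As $\mathcal{B}$ and $\mathcal{B}'$ satisfy the same fixed-point equation with the same linear part, uniqueness forces $\mathcal{B}=\mathcal{B}'$; passing to cycle index series ($X\mapsto p_1$) gives $Z^h_{\widehat{\prelie}}=Z_{\mathcal{B}}=\tfrac{1}{1+t}\,\prelie\circ((1+t)p_1)$.

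I expect the delicate point to be the bookkeeping of the weight $t$ inside plethystic substitutions: one must check that multiplying by the ``scalar'' $1+t$, which records a blue/red choice, interacts with $\mathcal{E}\circ(-)$ as the weighted-species formalism dictates (a $k$-cycle of identified copies sends $t\mapsto t^{k}$ and $1+t\mapsto 1+t^{k}$), and that the very same rule governs $\prelie\circ((1+t)p_1)$, so that the two fixed-point equations coincide as cycle index series and not merely as exponential generating functions. Once that is pinned down, only the short computation above remains.
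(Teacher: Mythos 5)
Your proof is correct and follows essentially the same route as the paper, whose proof of this proposition is a one-liner invoking Proposition \ref{3.4} and tacitly asserting that the cycle index series of $t$-weighted $2$-coloured rooted trees equals $\frac{1}{1+t}\,\prelie\circ\left(\left(1+t\right)p_1\right)$; you supply the weight check and the fixed-point-equation verification that the paper leaves implicit. One small repair: the divisibility of $\prelie\circ\left(\left(1+t\right)p_1\right)$ by $1+t$ does not follow from $\prelie$ having no constant term (at $t=-1$ only the odd-indexed $p_k$ are sent to $0$, the even ones to $2p_{k}$), but it does follow either from the fact that every monomial of $Z_{\prelie}$ contains a factor $p_1$ (a permutation fixing a rooted tree must fix its root), or directly from your own displayed equation $\mathcal{V}=\left(1+t\right)p_1\cdot\left(\mathcal{E}\circ\mathcal{V}\right)$.
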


\begin{proof} By Proposition \ref{3.4}, the cycle index series of hollow hypertrees decorated by $\widehat{\prelie}$ is given by the cycle index series of 2-coloured rooted trees.
\end{proof}

Let us define the following expressions, for $\lambda$ a partition of an integer $n$, written $\lambda \vdash n$: 
\begin{equation*}
f_k\left(\lambda\right)=\sum_{l | k} l \lambda_l
\end{equation*}
and
\begin{equation*}
P_k\left(\lambda\right)=\bigl((1+t^k)f_k\left(\lambda\right)-1 \bigr) ^{\lambda_k} -k \lambda_k(t^k+1) \times \bigl((1+t^k)f_k\left(\lambda\right)-1 \bigr) ^{\lambda_{k}-1}.
\end{equation*}

We obtain the following expression for the cycle index series of hypertrees decorated by $\widehat{\prelie}$: 

\begin{prop} The cycle index series of rooted hypertrees decorated by $\widehat{\prelie}$ is given by:
		\begin{equation}
		Z^r_{\widehat{\prelie}}= \sum_{n \geq 1} \sum_{\lambda \vdash n, \lambda_1 \neq 0} \lambda_1\left(\lambda_1 t +\lambda_1 -1\right)^{\lambda_1 - 2} \prod_{k \geq 2} P_k\left(\lambda\right) \frac{p_\lambda}{z_\lambda}.
		\end{equation}		

The cycle index series of edge-pointed rooted hypertrees decorated by $\widehat{\prelie}$ is given by:

		\begin{equation}
		Z^{re}_{\widehat{\prelie}}= \sum_{n \geq 1} \sum_{\lambda \vdash n, \lambda_1 \neq 0} \lambda_1\left(\lambda_1 -1\right)\left(2t+1\right)\left(\lambda_1 + \lambda_1 t-1\right)^{\lambda_1 -3}	 \prod_{k \geq 2} P_k\left(\lambda\right) \frac{p_\lambda}{z_\lambda}.
\end{equation}

The cycle index series of edge-pointed hypertrees decorated by $\widehat{\prelie}$ is given by:

		\begin{equation}
		Z^e_{\widehat{\prelie}}= \sum_{n \geq 1} \sum_{\lambda \vdash n, \lambda_1 \neq 0}\left(\lambda_1 -1\right)\left(1+\lambda_1 t\right)\left(\lambda_1 + \lambda_1 t-1\right)^{\lambda_1 -3}	 \prod_{k \geq 2} P_k\left(\lambda\right) \frac{p_\lambda}{z_\lambda}.
\end{equation}

The cycle index series of hypertrees decorated by $\widehat{\prelie}$ is given by:

		\begin{equation}
		Z_{\widehat{\prelie}}= \sum_{n \geq 1} \sum_{\lambda \vdash n, \lambda_1 \neq 0}\left(\lambda_1 t +\lambda_1 -1\right)^{\lambda_1 - 2} \prod_{k \geq 2} P_k\left(\lambda\right) \frac{p_\lambda}{z_\lambda}.
\end{equation}

		\end{prop}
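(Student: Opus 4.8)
The plan is to lift to cycle index series each construction used to obtain the ordinary generating series of this subsection, and to propagate through the species identities already proved, notably the dissymmetry relation \eqref{relpd} and the functional equations \eqref{relhp} and \eqref{relhc}. Observe first the shape of the answer: the four series share the factor $\prod_{k\geq2}P_k(\lambda)\,\frac{p_\lambda}{z_\lambda}$ and differ only in a polynomial prefactor in $\lambda_1$ which, in each case, is exactly the coefficient of $x^n/n!$ in the corresponding ordinary series of Proposition \ref{sprelie} and its companions, after the single substitution $n\mapsto\lambda_1=f_1(\lambda)$. This shape is forced: rooting and pointing act at a distinguished vertex or edge that no power $\sigma^m$ with $m\geq 1$ can displace, so this object lies among the $\lambda_1$ fixed points of $\sigma$, and the rest of the combinatorics will split as a ``fixed-point block'' times one independent block per cycle length $k\geq 2$, with only the fixed-point block feeling whether we rooted, pointed, or both.

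The master computation is that of $Z^r_{\widehat{\prelie}}$, which I would carry out from the box-tree decomposition of Theorem \ref{decomp root}. A $\widehat{\prelie}$-decorated rooted hypertree on a finite set $I$ is a triple $(r,\mathbb{S},BT)$ with $r\in I$, $\mathbb{S}$ a rooted forest on $I\setminus\{r\}$ (since $\widehat{\prelie}'=\prelie$) with $k$ components, and $BT$ a box tree on those $k$ components together with the singleton root $\{r\}$; this is not an isomorphism of species, but it is, for each $I$, a bijection equivariant under relabellings, which suffices. Fix $\sigma$ of cycle type $\lambda\vdash n$. A triple is $\sigma$-fixed only if $\sigma(r)=r$, whence a factor $\lambda_1$ for the choice of $r$ among fixed points. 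Organizing the other labels by the $\sigma$-cycle that contains them, the part of $\mathbb{S}$ and $BT$ carried by the fixed points reassembles into a $\widehat{\prelie}$-decorated rooted hypertree on $\lambda_1$ vertices with trivial action, contributing $\lambda_1(\lambda_1 t+\lambda_1-1)^{\lambda_1-2}$ by Proposition \ref{sprelie}; and for each $k\geq 2$ the part carried by the $k$-cycles contributes a common factor, obtained by specializing the weighted box-tree count of Proposition \ref{enumx box tree} with each label weighted by the length of its $\sigma$-cycle, so that $\sum_i x_i$ becomes the number $f_k(\lambda)$ of points fixed by $\sigma^k$ and the colour variable $t$ becomes $t^k$. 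The resulting Cayley-type enumeration of the box-tree-plus-forest pieces over these cycles, constrained so that the full box tree is a \emph{tree} and not merely a forest, equals $P_k(\lambda)$, the subtracted term $-k\lambda_k(t^k+1)\bigl((1+t^k)f_k(\lambda)-1\bigr)^{\lambda_k-1}$ being precisely the correction discarding the configurations in which the $k$-cyclic block fails to attach to the rest. Dividing by $z_\lambda$ gives the formula for $Z^r_{\widehat{\prelie}}$; the same expression can be recovered by solving the cycle-index translations of \eqref{relhp} and \eqref{relhc} via equivariant Lagrange inversion.

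The remaining three series follow formally, the factor $\prod_{k\geq2}P_k(\lambda)$ staying fixed because it only records the part of the structure over cycles of length $\geq 2$. For $Z_{\widehat{\prelie}}$ I would use $\hsp=X\times\hsprime$, i.e. $Z^r_{\widehat{\prelie}}=p_1\,\partial Z_{\widehat{\prelie}}/\partial p_1$; on the coefficient of $p_\lambda/z_\lambda$ this merely divides the $\lambda_1$-prefactor by $\lambda_1$, giving $(\lambda_1 t+\lambda_1-1)^{\lambda_1-2}$. For $Z^{re}_{\widehat{\prelie}}$ I would use Proposition \ref{2colpa}: the $\sigma$-fixed $2$-coloured rooted trees whose root has exactly one blue child are counted exactly as in the ordinary-series case, the root being a fixed point, one blue subtree distinguished and the remaining subtrees red, which returns the prefactor $\lambda_1(\lambda_1-1)(2t+1)(\lambda_1+\lambda_1 t-1)^{\lambda_1-3}$. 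Finally $Z^e_{\widehat{\prelie}}$ follows from \eqref{relpd}, $Z^e_{\widehat{\prelie}}=Z_{\widehat{\prelie}}+Z^{re}_{\widehat{\prelie}}-Z^r_{\widehat{\prelie}}$; since all three share $\prod_{k\geq2}P_k(\lambda)$ their prefactors simply add, and the one-line factorisation (with $u:=\lambda_1 t+\lambda_1-1$) $u^{\lambda_1-2}(1-\lambda_1)+\lambda_1(\lambda_1-1)(2t+1)u^{\lambda_1-3}=(\lambda_1-1)(1+\lambda_1 t)u^{\lambda_1-3}$ yields the stated prefactor.

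The main obstacle is the equivariant bookkeeping of the second paragraph: one must verify that the $\sigma$-action on the decomposition of Theorem \ref{decomp root} really splits a fixed triple into a fixed-point block, behaving like a smaller decorated rooted hypertree, together with one independent block per cycle length $k\geq 2$, and then enumerate those blocks so as to pin down $P_k(\lambda)$ precisely, in particular its subtracted term. Once $P_k(\lambda)$ is established for $Z^r_{\widehat{\prelie}}$ it transfers verbatim to the other series, so everything else is formal manipulation with the identities already available.
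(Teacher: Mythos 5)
Your overall architecture is partly sound: the formal derivations in your third paragraph are correct, namely $Z^r_{\widehat{\prelie}}=p_1\,\partial Z_{\widehat{\prelie}}/\partial p_1$ forces the prefactor of $Z_{\widehat{\prelie}}$ to be that of $Z^r_{\widehat{\prelie}}$ divided by $\lambda_1$ (modulo the unaddressed possibility of terms with $\lambda_1=0$, which differentiation cannot see), and your factorisation $u^{\lambda_1-2}(1-\lambda_1)+\lambda_1(\lambda_1-1)(2t+1)u^{\lambda_1-3}=(\lambda_1-1)(1+\lambda_1t)u^{\lambda_1-3}$ correctly recovers $Z^e_{\widehat{\prelie}}$ from the dissymmetry identity. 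This is a mild rearrangement of the paper, which instead computes $Z^e_{\widehat{\prelie}}$ directly from \eqref{rela} and recovers $Z_{\widehat{\prelie}}$ last by dissymmetry. The genuine gap is in your second paragraph, and you name it yourself: the entire content of the proposition is the identification of the prefactor $\lambda_1(\lambda_1t+\lambda_1-1)^{\lambda_1-2}$ together with the factor $\prod_{k\geq2}P_k\left(\lambda\right)$, and your proposal asserts rather than proves that the $\sigma$-fixed triples of Theorem \ref{decomp root} split into a ``fixed-point block'' plus one block per cycle length. Two concrete obstructions: a component of the forest $\mathbb{S}$ that is merely stabilised (not fixed pointwise) by $\sigma$ can contain both fixed and non-fixed labels, so the restriction to fixed points is not literally a smaller $\widehat{\prelie}$-decorated rooted hypertree and Proposition \ref{sprelie} cannot be invoked for it; and $P_k\left(\lambda\right)$ depends on $f_k\left(\lambda\right)=\sum_{l\mid k}l\lambda_l$, i.e.\ on all cycle lengths dividing $k$, so the blocks are not independent, and the subtracted term $-k\lambda_k(t^k+1)\bigl((1+t^k)f_k\left(\lambda\right)-1\bigr)^{\lambda_k-1}$ is not accounted for by the one-line gloss about configurations ``failing to attach''. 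The same unproved splitting is what your treatment of $Z^{re}_{\widehat{\prelie}}$ via Proposition \ref{2colpa} relies on.

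The paper closes exactly this gap by a different, analytic route: it writes $Z^r_{\widehat{\prelie}}=\frac{p_1}{t}\times\bigl((1+\comm)\circ\frac{t}{1+t}\,\prelie\circ((1+t)p_1)\bigr)$ using \eqref{relhp} and Proposition \ref{3.13}, extracts the coefficient of $p_\lambda$ as a multiple residue, and inverts the substitution $y_l=p_l\circ\frac{t}{1+t}\,\prelie\circ((1+t)p_1)$ by means of the Koszul duality relation $\bigl(-p_1(1+\comm)\bigr)\circ(-\prelie)=p_1$; the residue of $\exp(az)z^{-n}$ then produces $P_k\left(\lambda\right)$, including its subtracted term, mechanically, and the same substitution handles $Z^{re}_{\widehat{\prelie}}$ starting from \eqref{relpa}. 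Your closing remark that the result ``can be recovered by equivariant Lagrange inversion from \eqref{relhp} and \eqref{relhc}'' is essentially a pointer to this computation, but as written your proposal defers the only hard step of the proof.
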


\begin{proof}
We use Proposition \ref{décomp} and the cycle index series of hollow hypertrees decorated by $\widehat{\prelie}$ to compute the other series.

\begin{itemize}
\item The series $Z^r_{\widehat{\prelie}}$ satisfies:
\begin{equation*}
 Z^r_{\widehat{\prelie}} = \frac{p_1}{t} \times \biggl( \left(1+\comm\right) \circ\left(t \times Z^h_{\widehat{\prelie}}\right) \biggr).
\end{equation*}
Using the result of Proposition \ref{3.13}, it gives: 
\begin{equation*}
 Z^r_{\widehat{\prelie}} = \frac{p_1}{t} \times \biggl( \left(1+\comm\right) \circ\left(\frac{t}{1+t} \prelie \circ\bigl(\left(1+t\right)p_1\bigr)\right) \biggr).
\end{equation*}

We now use methods from the proof of the lemma 4 in the article of V. Dotsenko and A. Khoroshkin \cite{DotsKor} and from the proof of the proposition 7.2 in the article of F. Chapoton \cite{ChHyp}.

Consider $\lambda=\left(\lambda_1+1, \ldots, \lambda_r\right)$. We can suppose that, for $i \geq r$, we have $\lambda_i=0$. The coefficient of $t p_\lambda$ in $Z^r_{\widehat{\prelie}}$ is given by the multiple residue: 

\begin{equation*}
c_\lambda=\int\left(1+\comm\right) \circ \frac{t}{1+t} \prelie \circ\bigl(\left(1+t\right)p_1\bigr) \prod_{i=1}^{r} \frac{dp_i}{p_i^{\lambda_i +1}},
\end{equation*}
with $1+\comm = \prod_{k \geq 1} \exp\left({p_k}/{k}\right)$.

We use the substitution $y_l=p_l \circ t(1+t)^{-1} \prelie \circ\bigl(\left(1+t\right)p_1\bigr)$. By the Koszul duality of operads applied to the operad $\prelie$ in the article of F. Chapoton and M. Livernet \cite{ChLiv}, the cycle index series $\prelie$ satisfies $\bigl( -p_1 \left(1+\comm\right) \bigr) \circ -\prelie = p_1$. Therefore the substitution is given by for all $k \geq 1$ by: 
\begin{equation*}
p_k = \frac{y_k}{t^k} \exp\left(-\sum_{l\geq 1} \frac{1+t^{kl}}{t^{kl}} \frac{y_{kl}}{l}\right).
\end{equation*}
We thus obtain:
\begin{equation*}
c_\lambda=\int \prod_{k=1}^r \exp\left( \frac{y_k}{k} + \lambda_k \sum_{l \geq 1} \frac{1+t^{kl}}{t^{kl}} \frac{y_{kl}}{l}\right) \left(\frac{t^k}{y_k}\right)^{\lambda_k} \left(1-y_k \frac{1+t^k}{t^k}\right) y_k^{-1}\prod_{k=1}^{r} {dy_k}.
\end{equation*}
Then we can rewrite: 
\begin{equation*}
\sum_{k \geq 1 } k \lambda_k \sum_{l \geq 1} \frac{1+t^{kl}}{t^{kl}} \frac{y_{kl}}{kl} = \sum_{k \geq 1} \frac{1+t^{k}}{t^{k}} \left( f_k\left(\lambda\right) -1\right) \frac{y_{k}}{k},
\end{equation*}
with $f_k\left(\lambda\right)= \left( \sum_{l | k, l >1} l \lambda_l \right) + \lambda_1 +1$.
Therefore, $c_\lambda$ is given by the residue: 
\begin{equation*}
c_\lambda= \prod_{k=1}^{r} \int \exp \left(\frac{y_{k} a_k}{k} \right) t^{k \lambda_k}\left( 1 - y_k \frac{1+t^k}{t^k} \right)\frac{dy_k}{y_k^{\lambda_k +1}},
\end{equation*}
with $a_k=1+  \left(1+t^{k}\right) \left(t^k \right)^{-1}\left( f_k\left(\lambda\right) -1\right)$. This gives the expected result as the residue of $\exp\left(az\right)z^{-n}$, for a constant $a$, is given by ${a^{n-1}}/{\left(n-1\right)!}$.

\item The relation \eqref{relpa} gives the following relation for the series $Z^{re}_{\widehat{\prelie}}$: 
\begin{equation*}
Z^{re}_{\widehat{\prelie}}= p_1 \times \left( \bigl( p_1\left(1+ \comm\right)\bigr) \circ \frac{t}{1+t} \prelie \circ\bigl(\left(1+t\right)p_1\bigr) \right).
\end{equation*}

We use the same method as for rooted hypertrees with the same substitution to obtain the result.

\item The relation \eqref{rela} gives the following relation for the series $Z^{e}_{\widehat{\prelie}}$: 
\begin{equation*}
Z^{e}_{\widehat{\prelie}}=\widehat{\prelie} \circ t Z^r_{\widehat{\prelie}}.
\end{equation*}

However the relation (50) in the article of F. Chapoton \cite{ChAOp} gives: 
\begin{equation*}
\widehat{\prelie} = p_1 \left( 1+ \prelie + \frac{1}{\prelie}\right).
\end{equation*}

Hence the series $Z^{e}_{\widehat{\prelie}}$ satisfies: 

\begin{equation*}
Z^{e}_{\widehat{\prelie}}= t Z^r_{\widehat{\prelie}} \left( 1+ \prelie \circ t Z^r_{\widehat{\prelie}} + \frac{1}{\prelie \circ t Z^r_{\widehat{\prelie}}}\right).
\end{equation*}

Moreover, using the expression of $Z^h_{\widehat{\prelie}}$ in terms of $Z^r_{\widehat{\prelie}}$ of relation \eqref{relhc}, and the one of the proposition, we obtain: 
\begin{equation*}
Z^{e}_{\widehat{\prelie}}= p_1 \times \left(\left(1+\comm\right)\left( 1+\frac{p_1}{t} + \frac{t}{p_1}\right)\right) \circ \frac{t}{1+t} \prelie \circ\bigl(\left(1+t\right)p_1\bigr).
\end{equation*}

We use the same method as for rooted hypertrees with the same substitution to obtain the result.

\item The series $Z_{\widehat{\prelie}}$ is obtained by using the dissymetry principle \ref{principe de dissymétrie}. \qedhere
\end{itemize}
\end{proof}
	
	\subsection{$\widehat{\lie}$-decorated hypertrees}
	
	 \label{Lie}

In this part, we study $\widehat{\lie}$-decorated hypertrees, where $\lie$ is the species associated to the following cycle index series: 
\begin{equation*}
\lie= \sum_{k \geq 1} \frac{\mu\left(k\right)}{k} \log\left(1-p_k\right),
\end{equation*}
where $\mu$ is the Möbius function, whose value is $(-1)^p$ on square-free positive integers with $p$ prime factors, and $0$ otherwise.
		\subsubsection{Computation of generating series of hypertrees decorated by $\widehat{\lie}$}
	Applying the results of the second section, we obtain the following proposition: 
	
	\begin{prop}
	The generating series of the species of rooted hypertrees decorated by $\widehat{\lie}$ is given by: 
	\begin{equation*}
S^r_{\widehat{\lie}}= \sum_{n \geq 1}	\frac{1}{t} \prod_{k=0}^{n-2}\left(nt+k\right) \times \frac{x^n}{n!}.
	\end{equation*}
	
	The generating series of the species of hypertrees decorated by $\widehat{\lie}$ is given by: 
	\begin{equation*}
S_{\widehat{\lie}}=\sum_{n \geq 1}	\prod_{k=1}^{n-2}\left(nt+k\right) \times \frac{x^n}{n!}.
	\end{equation*}
	
		The generating series of the species of hollow hypertrees decorated by $\widehat{\lie}$ is given by: 
	\begin{equation*}
S^h_{\widehat{\lie}}=\sum_{n \geq 1} \prod_{k=1}^{n-1}\left(nt+k\right) \frac{x^n}{n!}.
	\end{equation*}

		The generating series of the species of rooted edge-pointed hypertrees decorated by $\widehat{\lie}$ is given by: 
	\begin{equation*}
S^{re}_{\widehat{\lie}}=\sum_{n \geq 2} \sum_{p=1}^{n-1} \binom{n}{p} \prod_{k=0}^{n-2} \prod_{l=1}^{n-p-1}\left(pt+k\right)\bigl( \left(n-p\right)t+l \bigr) \frac{x^n}{n!}.
	\end{equation*}

		The generating series of the species of edge-pointed hypertrees decorated by $\widehat{\lie}$ is given by: 
	\begin{equation*}
S^{e}_{\widehat{\lie}}=\sum_{n \geq 2} \prod_{k=0}^{n-2}\left(nt+k\right) - \sum_{p=0}^{n-1} \binom{n}{p} \prod_{k=0}^{n-2} \prod_{l=1}^{n-p-1}\left(pt+k\right)\bigl(\left(n-p\right)t+l\bigr)\frac{x^n}{n!}.
	\end{equation*}
	
	\end{prop}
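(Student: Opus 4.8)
The plan is to derive all five series directly from Theorem~\ref{cor S}, so the only genuinely new ingredient needed is the value of the enumeration coefficient $E_{\widehat{\lie}}(k,n)$. I would first record that $\widehat{\lie}$ is the linear species with $\widehat{\lie}'=\lie$, that $\dim\lie(B)=(\lvert B\rvert-1)!$ (the dimension of the Lie operad in arity $\lvert B\rvert$, which is also the number of cyclic orders on $B$), and that $\widehat{\lie}(\{1\})=\lie(\emptyset)=0$ and $\widehat{\lie}(\emptyset)=0$, so that Theorem~\ref{cor S} is applicable to $\widehat{\lie}$. Now $E_{\widehat{\lie}}(k,n)$ counts the ways to split an $n$-element set into $k$ blocks each carrying a $\lie$-structure; since equipping every block with a cyclic order is the same datum as specifying a permutation of the $n$ vertices with exactly $k$ cycles, one obtains $E_{\widehat{\lie}}(k,n)=\lvert s(n,k)\rvert$, the unsigned Stirling number of the first kind. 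This is the coincidence with the cycle decoration already observed in Section~2.

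The second ingredient is the classical generating identity $\sum_{k\geq 0}\lvert s(m,k)\rvert\,z^{k}=\prod_{j=0}^{m-1}(z+j)$, valid for all $m\geq 0$ with the empty product understood as $1$ when $m=0$, together with $\lvert s(m,0)\rvert=0$ for $m\geq 1$. Substituting $E_{\widehat{\lie}}(k,n-1)=\lvert s(n-1,k)\rvert$ with $z=nt$ and $m=n-1$ into formula~\eqref{formule Sp} collapses the inner sum to $\prod_{j=0}^{n-2}(nt+j)$; after dividing by $t$ the leading term $x/t$ is exactly the $n=1$ summand, and re-indexing gives the stated closed form for $S^{r}_{\widehat{\lie}}$. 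The same substitution in the series for $\Sc$ and in formula~\eqref{formule S} gives $S^{h}_{\widehat{\lie}}$ and $S_{\widehat{\lie}}$, once one pulls the factor $nt$ out of $\prod_{j=0}^{n-1}(nt+j)$, respectively $\prod_{j=0}^{n-2}(nt+j)$, to absorb the shift from exponent $k$ to $k-1$.

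For the last two series I would not go back to hypertrees but instead use the relations $\Spa(x)=t\,\Sp(x)\times\Sc(x)$ and $\Sa(x)=\Ss(x)+\Spa(x)-\Sp(x)$ from Theorem~\ref{cor S}. From the first I multiply the two power series $S^{r}_{\widehat{\lie}}$ and $S^{h}_{\widehat{\lie}}$ obtained above and read off the coefficient of $x^{n}$, writing $p$ for the number of vertices contributed by the rooted factor and $1/\bigl(p!\,(n-p)!\bigr)=\binom{n}{p}/n!$; the factor $t$ cancels the $1/t$ inside $S^{r}_{\widehat{\lie}}$, producing the displayed double sum for $S^{re}_{\widehat{\lie}}$. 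Then $S^{e}_{\widehat{\lie}}$ follows by substituting the three now-known series into the dissymmetry relation $S^{e}_{\widehat{\lie}}=S_{\widehat{\lie}}+S^{re}_{\widehat{\lie}}-S^{r}_{\widehat{\lie}}$ and simplifying, using repeatedly $\prod_{j=0}^{m-1}(at+j)=at\prod_{j=1}^{m-1}(at+j)$ to put the result in the stated shape.

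I expect the only real (if modest) obstacle to be the low-degree bookkeeping: checking that the re-indexing which turns ``$x/t+\sum_{n\geq 2}$'' into a single sum ``$\sum_{n\geq 1}$'' is justified by the empty-product convention, and that the summation ranges and the vanishing of the boundary products in the $S^{re}_{\widehat{\lie}}$ and $S^{e}_{\widehat{\lie}}$ formulas (for instance the $p=1$ and $p=n-1$ terms) come out precisely as written. There is nothing deep beyond this; the content is entirely the evaluation $E_{\widehat{\lie}}(k,n)=\lvert s(n,k)\rvert$ and the Stirling generating function.
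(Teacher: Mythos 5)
Your proposal is correct and follows essentially the same route as the paper: the first three formulas come from Theorem \ref{cor S} via the identification $E_{\widehat{\lie}}(k,n)=\lvert s(n,k)\rvert$ and the classical Stirling generating function $\sum_k \lvert s(m,k)\rvert z^k=\prod_{j=0}^{m-1}(z+j)$, the fourth from the product relation $\Spa = t\,\Sp\times\Sc$, and the fifth from the dissymmetry principle. Your explicit justification of $E_{\widehat{\lie}}(k,n)=\lvert s(n,k)\rvert$ via $\dim\lie(B)=(\lvert B\rvert-1)!$ is a welcome detail the paper leaves implicit.
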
	
		
		\begin{proof}
		The three first results are obtained by applying the expressions of Formula \ref{formule Sp} and the formula \eqref{formule S} of Theorem \ref{cor S} with $E_{\widehat{\lie}}\left(k,n\right)=\left(-1\right)^{k+n} s\left(k,n\right)$, where $s\left(k,n\right)$ is a Stirling number of the first kind. Indeed, the Stirling numbers of the first kind satisfy the following classical equation: 
		\begin{equation*}
		\sum_{k=1}^{n} s\left(k,n\right) x^k = \prod_{k=0}^{n-1}\left(x-k\right).
		\end{equation*}
		
		The fourth one is obtained using the relation \eqref{relpa} and the last one is obtained by using the dissymetry principle of Proposition \ref{principe de dissymétrie}.
\end{proof}			
	
		\subsubsection{Computation of cycle index series of hypertrees decorated by $\widehat{\lie}$}
		
Let us now study the action of the symmetric group on these hypertrees.		
		
Let us define the following expressions, for $\lambda=(\lambda_1, \lambda_2, \ldots)$ a partition of an integer $n$: 
\begin{equation*}
f_k\left(\lambda\right)=\sum_{l | k} l \lambda_l
\end{equation*}
and
\begin{equation*}
\varphi_i\left(\lambda\right) = \sum_{k | i} \frac{t^k}{i} \mu\left(\frac{i}{k}\right) f_k\left(\lambda\right),
\end{equation*}

where $\mu$ is the Möbius function.

Using these expressions, we have the following relations: 

		\begin{prop}
	The cycle index series of hollow hypertrees decorated by $\widehat{\lie}$ is given by: 

\begin{multline*}
	Z^h_{\widehat{\lie}}= \sum_{n \geq 1} \sum_{\lambda \vdash n } \sum_{p\geq 1} \frac{\mu\left(p\right)}{p} \prod_{i=1, i \neq p}^{r} \left( \binom{\varphi_i\left(\lambda\right) + \lambda_i -1}{\lambda_i} - t^i \binom{\varphi_i\left(\lambda\right) + \lambda_i -1}{\lambda_i -1}\right) \\ \times \sum_{q=1}^{\lambda_p}\frac{1}{q} \left( \binom{\varphi_p\left(\lambda\right) + \lambda_p -q -1}{\lambda_p-q} - t^q \binom{\varphi_p\left(\lambda\right) + \lambda_p -q -1}{\lambda_p -q-1}\right) \frac{p_\lambda}{z_\lambda}.
\end{multline*}

The cycle index series of rooted hypertrees decorated by $\widehat{\lie}$ is given by: 
		\begin{equation*}
		Z^r_{\widehat{\lie}}= \sum_{n \geq 1} \sum_{\lambda \vdash n} \frac{1}{t} \left( \prod_{k=0}^{\lambda_1-2}\left(\lambda_1 t +k\right) \right) \prod_{i \geq 2} \left( \binom{\varphi_i\left(\lambda\right) + \lambda_i -1}{\lambda_i} - t^i \binom{\varphi_i\left(\lambda\right) + \lambda_i -1}{\lambda_i-1} \right) \frac{p_\lambda}{z_\lambda}.
		\end{equation*}		
		
		\end{prop}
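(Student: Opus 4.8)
The plan is to reproduce, almost verbatim, the method used above to compute $Z^r_{\widehat{\prelie}}$: derive an implicit equation for the cycle index series of the hollow $\widehat{\lie}$-decorated hypertrees, linearise the plethystic substitution occurring in it by means of the Koszul duality between $\operatorname{Comm}$ and $\operatorname{Lie}$, and then evaluate the resulting multiple residue as a product of one-variable residues. Concretely, writing $Z^h:=Z^h_{\widehat{\lie}}$ and $Z^r:=Z^r_{\widehat{\lie}}$ and using relations \eqref{relhp} and \eqref{relhc} with $S'=\lie$ (so that $S=\widehat{\lie}$), one has at the level of cycle index series
\[
Z^h=\lie\circ\bigl(t\,Z^r\bigr),\qquad t\,Z^r=p_1+p_1\cdot\bigl(\comm\circ(t\,Z^h)\bigr),
\]
so that, by Corollary \ref{coreq}, $W:=t\,Z^r$ is the unique series with $W=p_1+\cdots$ solving $W=p_1\cdot\bigl((1+\comm)\circ(t\,\lie\circ W)\bigr)$, where $1+\comm$ has cycle index series $\exp\!\bigl(\sum_{k\ge 1}p_k/k\bigr)$. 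The coefficient of $p_\lambda$ in $Z^r$, for $\lambda\vdash n$ with $\lambda_1\neq 0$, is then a multiple residue $\oint(\cdots)\prod_{i\ge 1}dp_i/p_i^{\lambda_i+1}$ of the right-hand side of this equation.

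The second step is the change of variables. One introduces variables $y_k$ by the plethystic substitution $y_k=p_k\circ(t\,\lie\circ W)=p_k\circ(t\,Z^h)$ — this is exactly the analogue of the substitution $y_l=p_l\circ t(1+t)^{-1}\prelie\circ((1+t)p_1)$ of the $\widehat{\prelie}$ computation, with $\prelie$ replaced by $\lie$ — so that $W=p_1\exp\!\bigl(\sum_k y_k/k\bigr)$. Because $\operatorname{Comm}$ and $\operatorname{Lie}$ are Koszul dual, the cycle index series of $\lie$ satisfies $\comm\circ\lie=\assoc^{+}$ with cycle index $p_1/(1-p_1)$, equivalently $\lie=\sum_{k\ge 1}\frac{\mu(k)}{k}\log\!\frac{1}{1-p_k}$; inverting this relation expresses each $p_k$ in closed form in terms of the $y_{kl}$, of the shape $p_k=\dfrac{y_k}{t^{k}}\exp\!\bigl(-\sum_{l\ge 1}\tfrac1l\,c_{kl}(t,y)\bigr)$, together with a Jacobian factor of the form $\prod_k\bigl(1-(\text{affine in }y_k)\bigr)y_k^{-1}$. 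The quantities $f_k(\lambda)=\sum_{l\mid k}l\lambda_l$ and their Möbius transforms $\varphi_i(\lambda)$ arise precisely when one collects, for each $k$, the total multiplicity with which $y_k$ enters the exponential after substitution, exactly as the identity $\sum_k k\lambda_k\sum_l\frac{1+t^{kl}}{t^{kl}}\frac{y_{kl}}{kl}=\sum_k\frac{1+t^k}{t^k}(f_k(\lambda)-1)\frac{y_k}{k}$ is used in the $\widehat{\prelie}$ case, now with the $t$-weights reorganised into the $\varphi_i(\lambda)$.

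In the third step, after the change of variables the multiple residue decouples as $\prod_{i\ge 1}\oint\exp(a_i y_i)\cdot(\text{affine in }y_i)\,dy_i/y_i^{\lambda_i+1}$, each factor being computed by $\operatorname{Res}\exp(az)z^{-n}=a^{n-1}/(n-1)!$; bookkeeping the powers of $t$ turns the $i\ge 2$ factors into $\binom{\varphi_i(\lambda)+\lambda_i-1}{\lambda_i}-t^{i}\binom{\varphi_i(\lambda)+\lambda_i-1}{\lambda_i-1}$ and the distinguished $i=1$ factor into $\frac1t\prod_{k=0}^{\lambda_1-2}(\lambda_1 t+k)$, which is the asserted formula for $Z^r_{\widehat{\lie}}$. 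Finally $Z^h_{\widehat{\lie}}=\lie\circ(t\,Z^r_{\widehat{\lie}})=\sum_{p\ge 1}\frac{\mu(p)}{p}\log\!\frac{1}{1-(t\,Z^r_{\widehat{\lie}})[p_p]}$; expanding $\log\frac{1}{1-z}=\sum_{q\ge 1}z^q/q$ and extracting the coefficient of $p_\lambda$ replaces the $p$-th Euler factor above by $\sum_{q=1}^{\lambda_p}\frac1q\bigl(\binom{\varphi_p(\lambda)+\lambda_p-q-1}{\lambda_p-q}-t^{q}\binom{\varphi_p(\lambda)+\lambda_p-q-1}{\lambda_p-q-1}\bigr)$ and produces the outer sum $\sum_{p\ge 1}\frac{\mu(p)}{p}$, yielding the asserted formula for $Z^h_{\widehat{\lie}}$.

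The genuinely delicate point is the second step: setting up the plethystic substitution $p_k\leftrightarrow y_k$ together with its inverse and the correct Jacobian, and in particular keeping track of the weight $t$ (which is raised to powers $t^{k}$, $t^{kl}$ under the successive plethysms) so that the multiple residue really decouples and the combination of $t$-weights condenses into the functions $\varphi_i(\lambda)$. Once this is in place, the rest of the third step is a routine application of $\operatorname{Res}\exp(az)z^{-n}=a^{n-1}/(n-1)!$, of the binomial theorem, and of the expansion of the logarithm; the low-degree cases ($\lambda_1\le 1$, together with the convention $E_{\widehat{\lie}}(1,1)=1$) must be checked separately so that the empty products $\prod_{k=0}^{\lambda_1-2}$ are read correctly.
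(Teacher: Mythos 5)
Your overall strategy is the paper's: start from the functional equations of Corollary \ref{coreq}, extract the coefficient of $p_\lambda$ as a multiple residue, linearise by a plethystic change of variables, and recover $Z^h_{\widehat{\lie}}$ from $Z^r_{\widehat{\lie}}$ by expanding the logarithm in $\lie$. But the step you yourself flag as the delicate one --- the choice of substitution and the shape of its inverse --- is set up incorrectly, and as written the computation would not go through. You take $y_k=p_k\circ\bigl(t Z^h_{\widehat{\lie}}\bigr)$ by analogy with the $\widehat{\prelie}$ case and claim an inverse of the shape $p_k=\frac{y_k}{t^k}\exp\bigl(-\sum_l \tfrac{1}{l}c_{kl}(t,y)\bigr)$. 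That shape is specific to $\prelie$: it comes from the Koszul relation $\bigl(-p_1(1+\comm)\bigr)\circ(-\prelie)=p_1$, whose left factor carries the prefactor $p_1$. For $\lie$ the dual relation is $(1+\comm)\circ\lie=1/(1-p_1)$, and inverting $Z^h=\lie\circ(tZ^r)$ in your variables gives instead $p_i=\bigl(1-\exp(-\sum_{l}y_{il}/(l t^{il}))\bigr)\exp\bigl(-\sum_k y_{ki}/k\bigr)$. Each factor $1-\exp(-\cdots)$ mixes all the $y_{il}$ at once, so $p_i^{-\lambda_i-1}$ does not split into single-variable pieces and the multiple residue does not decouple as you assert.

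The computation actually uses the substitution $y_i=p_i\circ\bigl(tZ^r_{\widehat{\lie}}\bigr)$ (the rooted series, not the hollow one). The equation $tZ^r=p_1\cdot(1+\comm)\circ(t\lie\circ tZ^r)$ together with $(1+\comm)\circ t\lie=\prod_{k,l}(1-p_{kl})^{-t^k\mu(l)/(kl)}$ then gives the inverse $p_i=y_i\prod_{k,l\ge1}(1-y_{kli})^{t^{ki}\mu(l)/(kl)}$, a product of powers of the single-variable factors $1-y_m$; collecting exponents is exactly what produces $\varphi_i(\lambda)$, and each one-variable residue is evaluated by expanding $(1-y)^{-\varphi}$ as a binomial series, yielding $\binom{\varphi_i(\lambda)+\lambda_i-1}{\lambda_i}-t^i\binom{\varphi_i(\lambda)+\lambda_i-1}{\lambda_i-1}$. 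Your proposed evaluation via $\operatorname{Res}\exp(az)z^{-n}=a^{n-1}/(n-1)!$ is again the $\prelie$ mechanism and would produce powers of affine forms rather than the binomial coefficients that appear in the statement. (Your final step also quietly uses $y_p=p_p\circ(tZ^r)$ when expanding $\log\bigl(1-(tZ^r)[p_p]\bigr)$, which is inconsistent with your earlier definition of the $y$'s.) With the correct substitution, the remainder of your outline --- the decoupling, the distinguished $i=1$ factor, and the $\sum_q\tfrac{1}{q}$ from the logarithm in the hollow case --- does match the intended argument.
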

		
		\begin{proof}
\begin{itemize}
\item The equations of Corollary \ref{coreq} give: 
\begin{equation*}
t Z^r_{\widehat{\lie}} = p_1 +p_1 \times \comm \circ \left( t \times \lie \circ t Z^r_{\widehat{\lie}} \right)
\end{equation*}
and
\begin{equation*}
Z^h_{\widehat{\lie}} = \lie \circ t Z^r_{\widehat{\lie}}.
\end{equation*}

Consider $\lambda=\left(\lambda_1, \ldots, \lambda_r\right)$. We can suppose that, for $i \geq r$, we have $p_i=0$. The coefficient of $p_{\lambda}$ in $Z^r_{\widehat{\lie}}$ is given by the residue $c_\lambda$: 
\begin{equation*}
c_\lambda = \int Z^r_{\widehat{\lie}} \prod_{i=1}^{r} \frac{dp_i}{p_i^{\lambda_i +1}}.
\end{equation*}

We use the substitution $y_i=p_i \circ t Z^r_{\widehat{\lie}}$. Let us compute first $\left(1+ \comm\right) \circ t \lie$.
\begin{center}
\begin{align*}
\left(1+ \comm\right) \circ t \lie & = \exp \left( \sum_{k \geq 1} \frac{t^k}{k} p_k \circ \left( - \sum_{l \geq 1} \frac{\mu\left(l\right)}{l} \log\left(1-p_l\right) \right) \right) \\
& = \exp \left( - \sum_{k \geq 1} \sum_{l \geq 1} \frac{t^k}{kl} \mu\left(l\right) \log\left(1-p_{kl}\right) \right) \\
& =\prod_{k,l \geq 1}\left(1-p_{kl}\right)^{- \frac{t^k \mu\left(l\right)}{kl}}.
\end{align*}
\end{center}

Hence the substitution is given by: 
\begin{equation*}
p_i = y_i \prod_{k,l \geq 1}\left(1-y_{kli}\right)^{ \frac{t^{ki} \mu\left(l\right)}{kl}}.
\end{equation*}

With this substitution, we obtain: 
\begin{align*}
c_\lambda & = \int \frac{y_1}{t}\prod_{i=1}^{r } \prod_{k,l \geq 1}\left(1-y_{kli}\right)^{ - \frac{\lambda_i t^{ki} \mu\left(l\right)}{kl}} \left( 1- \frac{t^i y_i}{1-y_i} \right) \frac{dy_i}{y_i^{\lambda_i +1}}.
\end{align*}

We then separate the terms in each of the $y_j$. It gives: 

\begin{multline*}
 c_\lambda = \int \frac{y_1}{t} \left(1-y_1\right)^{- \lambda_1 t} \left( 1- \frac{t y_1}{1-y_1}\right) \frac{d y_1}{y_1 ^{\lambda_1 +1}} \\ \times \prod_{i=2}^{r } \int\left(1-y_{i}\right)^{ - \varphi_i\left(\lambda\right)} \left( 1- \frac{t^i y_i}{1-y_i}\right) \frac{dy_i}{y_i^{\lambda_i +1}},
\end{multline*}

where we set:
\begin{equation*}
\varphi_i\left(\lambda\right) = \sum_{j|k|i} \frac{t^k}{i} \mu \left( \frac{i }{k}\right) j \lambda_j =\sum_{k | i} \frac{t^k}{i} \mu\left(\frac{i}{k}\right) f_k\left(\lambda\right).
\end{equation*}

We now compute the following integral: 

\begin{align*}
\int y_p^q & \left(1-y_p\right) ^{-\varphi_p\left(\lambda\right) -1}\left(1-y_p\left(1+t^p\right)\right) \frac{dy_p}{y_p^{\lambda_p+1}} \\ & = \sum_{j \geq 0} \binom{\varphi_p\left(\lambda\right) + j}{j} \left( \int y_p^{q+j- \lambda_p -1} dy_p -\left(1+t^p\right) \int y_p^{q+j-\lambda_p} dy_p\right)
\\ & = \binom{\varphi_p\left(\lambda\right) + \lambda_p -q-1}{\lambda_p -q} - t^p \binom{\varphi_p\left(\lambda\right) + \lambda_p -q-1}{\lambda_p -q-1}.
\end{align*}

This computation applied to $c_\lambda$ give the expected result for $Z^r_{\widehat{\lie}}$.

 \item Calling $\lambda=\left(\lambda_1, \ldots, \lambda_r\right)$. We can suppose that, for $i \geq r$, we have $p_i=0$. The coefficient of $ p_{\lambda}$ in $Z^h_{\widehat{\lie}}$ is given by the residue $d_\lambda$: 
\begin{equation*}
d_\lambda=\int \lie \circ t Z^r_{\widehat{\lie}} \prod_{i=1}^{r} \frac{dp_i}{p_i^{\lambda_i +1}}.
\end{equation*}

To compute $d_\lambda$, we use the same substitution $y_i=p_i \circ t Z^r_{\widehat{\lie}}$. It gives, expanding $\log$: 
\begin{align*}
d_\lambda & = -\int \sum_{p \geq 1} \frac{\mu\left(p\right)}{p} \log\left(1-y_p\right) \prod_{i=1}^{r} \prod_{k,l \geq 1}\left(1-y_{kli}\right)^{- \frac{i \lambda_i t^{ki} \mu\left(l\right)}{kli}} \left( 1- \frac{t^i y_i}{1-y_i}\right) \frac{dy_i}{y_i^{\lambda_i +1}} \\
& = \sum_{p \geq 1} \frac{\mu\left(p\right)}{p} \sum_{q \geq 1} \frac{1}{q} \int y_p^q \prod_{i=1}^{r}\left(1-y_i\right)^{- \varphi_i(\lambda)} \left( 1- \frac{t^i y_i}{1-y_i}\right) \frac{dy_i}{y_i^{\lambda_i +1}}.
\end{align*}

This gives the expected result.\qedhere
\end{itemize}
\end{proof}	

		\subsubsection{Link with the operad $\Lambda$}
		
In this part, we link hypertrees decorated by $\widehat{\Sigma \lie}$ and $\widehat{\lie}$ with an operad $\Lambda$, defined in the article of F. Chapoton \cite{OpDiff}. We recommend the reader to consult the reminder on cycle index series in Appendix \ref{Annexe ind cycl} for the definition of the suspension $\Sigma$.

\begin{prop} The hollow hypertrees decorated by $\widehat{\Sigma \lie}$ and $\widehat{\lie}$ are related by the following relation: 
\begin{equation*}
Z^h_{\widehat{\lie}}\left(t, p_1, p_2, \ldots\right) = \Sigma Z^h_{\widehat{\Sigma \lie}}\left(-t, p_1, p_2, \ldots\right).
\end{equation*}
\end{prop}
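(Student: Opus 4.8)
The plan is to show that $\Sigma Z^h_{\widehat{\Sigma \lie}}(-t, p_1, p_2, \ldots)$ solves the same fixed-point equation that determines $Z^h_{\widehat{\lie}}(t, p_1, p_2, \ldots)$ degree by degree. Applying the second equation of Corollary \ref{coreq} to $\mathcal{S} = \widehat{\lie}$, so that $\mathcal{S}' = \lie$, gives
\begin{equation*}
Z^h_{\widehat{\lie}} = \lie \circ \bigl( p_1 + p_1 \times \comm \circ ( t\, Z^h_{\widehat{\lie}}) \bigr),
\end{equation*}
and the same equation for $\mathcal{S} = \widehat{\Sigma \lie}$ reads $Z^h_{\widehat{\Sigma \lie}} = (\Sigma \lie) \circ \bigl( p_1 + p_1 \times \comm \circ ( t\, Z^h_{\widehat{\Sigma \lie}}) \bigr)$. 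The steps I would carry out are: first substitute $t \mapsto -t$ throughout the second equation; then apply the operator $\Sigma$ to both sides; then use the rules recalled in Appendix \ref{Annexe ind cycl} governing the interaction of $\Sigma$ with plethystic substitution, with the product, and with the series $p_1$, $\comm$ and $\lie$ — in particular that $\Sigma$ fixes $p_1$, that it is an involution up to sign on cycle index series so that $\Sigma$ sends $\Sigma \lie$ back to $\lie$, and that the extra sign it produces in front of each plethystic composition — a power of $-1$ depending on homological degree, hence ultimately on the number of edges of the hypertree — is exactly the one supplied by the replacement $t \mapsto -t$, since $t$ records the number of edges minus one and suspension of a cyclic operad shifts the degree of each edge decoration. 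Once this is checked, $\Sigma Z^h_{\widehat{\Sigma \lie}}(-t, \mathbf{p})$ satisfies the displayed equation for $Z^h_{\widehat{\lie}}(t, \mathbf{p})$, and the two coincide by uniqueness of the solution.

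An equivalent but more computational route is to compare closed formulas: the proposition preceding this one expresses $Z^h_{\widehat{\lie}}$ as an explicit sum over partitions $\lambda$ built from the quantities $f_k(\lambda)$ and $\varphi_i(\lambda)$, and the very same derivation, using the cycle index series of $\Sigma \lie$ in place of that of $\lie$, produces the analogous formula for $Z^h_{\widehat{\Sigma \lie}}$. One then checks term by term that performing $t \mapsto -t$ in the latter — so that $t^k \mapsto (-1)^k t^k$ inside each $\varphi_i(\lambda)$ and in every explicit power of $t$ — and twisting each monomial $p_\lambda / z_\lambda$ according to the definition of $\Sigma$ recovers the formula for $Z^h_{\widehat{\lie}}$; this amounts only to reorganising binomial coefficients and powers of $t$.

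The main obstacle is the bookkeeping of signs and of conventions. One must reconcile the three sources of signs that intervene — the $\omega$-type twist $p_k \mapsto (-1)^{k-1} p_k$ built into $\Sigma$, the homological degree shift (a power of $-1$ depending on the numbers of vertices and edges) coming from suspending a cyclic operad, and the power of $t$ counting edges — and confirm that together they yield precisely $Z^h_{\widehat{\lie}}(t, \mathbf{p}) = \Sigma Z^h_{\widehat{\Sigma \lie}}(-t, \mathbf{p})$ with no leftover global sign. The structural fact that makes the argument run, already exploited in the box-tree arguments of Section 2, is that in a rooted or hollow hypertree every vertex is the non-petiole, non-gap element of exactly one edge, so the per-edge sign twists coming from $\widehat{\Sigma \lie}$ assemble into a single sign representation on the whole vertex set, which is exactly the twist that $\Sigma$ encodes at the level of cycle index series.
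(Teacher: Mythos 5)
Your first route is exactly the paper's proof: apply $\Sigma$ to the fixed-point equation $Z^h_{\widehat{\Sigma \lie}} = \Sigma \lie \circ \bigl( p_1 \times (1+\comm) \circ ( t\, Z^h_{\widehat{\Sigma \lie}}) \bigr)$ from Corollary \ref{coreq}, use the rules of Proposition \ref{prop suspension} (in particular $\Sigma(f\circ g)=\Sigma f\circ\Sigma g$ and $-\Sigma f=f\circ(-p_1)$) to absorb the signs into a replacement $t\mapsto -t$, and conclude that the result satisfies the defining equation of $Z^h_{\widehat{\lie}}$ at $-t$, hence equals it by uniqueness. The sign bookkeeping you flag as the main obstacle is handled in the paper by exactly those two identities, so the proposal is correct and essentially identical in approach.
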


\begin{proof}
According to Corollary \ref{coreq}, the series $Z^h_{\widehat{\Sigma \lie}}$ satisfies: 
\begin{equation*}
Z^h_{\widehat{\Sigma \lie}}\left(t, p_1, p_2, \ldots\right) = \Sigma \lie \circ \left(p_1 \times \left(1+ \comm\right) \circ\left(t Z^h_{\widehat{\Sigma \lie}}\left(t, p_1, p_2, \ldots\right) \right) \right).
\end{equation*}

Hence, applying the suspension and using Proposition \ref{prop suspension}, we obtain: 
\begin{align*}
\Sigma Z^h_{\widehat{\Sigma \lie}}\left(t, p_1, p_2, \ldots\right)& = \lie \circ \left(- p_1 \times \Sigma\left(1+ \comm\right) \circ\left(t \Sigma Z^h_{\widehat{\Sigma \lie}}\left(t, p_1, p_2, \ldots\right)\right) \right), \\
& = \lie \circ \left(p_1 \times \left(1+ \comm\right) \circ\left(-t \Sigma Z^h_{\widehat{\Sigma \lie}}\left(t, p_1, p_2, \ldots\right)\right) \right).
\end{align*}

The last equation is the equation defining $Z^h_{\widehat{\lie}}\left(-t, p_1, p_2, \ldots\right)$, so we get the expected result.
\end{proof}

Let us now introduce the operads $\pasc$ and $\Lambda$, which have been defined in \cite{OpDiff}. These two operads are symmetric operads in the category of graded vector spaces defined by some binary generators and quadratic relations \footnote{These presentations can be found in \cite{OpDiff}}. The operad $\pasc$ admits an explicit basis indexed by non-empty subsets, and its underlying species is given by : 
\begin{equation*}
\pasc = (1+\comm) \Sigma_t \comm.
\end{equation*}

The operad $\Lambda$ is defined as the quadratic dual of the operad $\pasc$. In an unpublished paper \cite{Dotp}, V. Dotsenko has proved that these operads are Koszul. This result gives the following proposition.

\begin{prop}
Hollow hypertrees decorated by $\widehat{\Sigma \lie}$ are related to the operad $\Lambda$ by: 
\begin{align*}
Z^h_{\widehat{\Sigma \lie}}\left(t, p_1, p_2, \ldots, p_i, \ldots \right) & = t^{-1} \Lambda\left(t^{-1},t p_1, \ldots, t^i p_i, \ldots\right) \\ & = \Sigma \Sigma_t \Lambda\left(t^{-1}, p_1, \ldots\right).
\end{align*}
\end{prop}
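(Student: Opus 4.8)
The plan is to identify both sides of the first claimed equality as the unique solution of one and the same plethystic functional equation, and then to deduce the second equality by unwinding the definitions of the twists $\Sigma$ and $\Sigma_t$. First I would record the functional equation already isolated in the proof of the previous proposition: applying Corollary \ref{coreq} to the linear species $\mathcal S=\widehat{\Sigma\lie}$ (so that $\mathcal S'=\Sigma\lie$), the series $W:=Z^h_{\widehat{\Sigma\lie}}$ satisfies
\[
W \;=\; \Sigma\lie \circ \bigl(p_1\times(1+\comm)\circ(tW)\bigr).
\]
Since the right-hand side has lowest term $p_1$ and each further coefficient of a monomial $p_\lambda$ is then forced by those of strictly smaller degree, this equation has a unique solution in the variables $p_i$; so it suffices to check that $t^{-1}\Lambda(t^{-1},tp_1,t^2p_2,\dots)$ solves it.

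Next I would bring in the operad $\Lambda$. Since $\Lambda=\pasc^!$ and, by Dotsenko's theorem \cite{Dotp}, both $\pasc$ and $\Lambda$ are Koszul, the Ginzburg--Kapranov relation expresses the (graded, suspension-twisted) cycle index series of $\Lambda$ as the plethystic inverse of that of $\pasc=(1+\comm)\,\Sigma_t\comm$, which is an implicit equation determining $Z_\Lambda$. I would then rewrite it using, in addition, the Koszul duality between $\comm$ and $\lie$ --- that is, $(-\lie)\circ(-\comm)=p_1$, exactly of the type used in the $\widehat\prelie$ computation --- which trades the composition by $1+\comm$ inherited from $\pasc$ for the operation built out of $\lie$. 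After performing the substitution $t\mapsto t^{-1}$, $p_i\mapsto t^i p_i$ and multiplying by $t^{-1}$, the weight twist $\Sigma_t$ carried by $\pasc$ and the sign twist produced by the $\comm$--$\lie$ duality recombine, and one finds that $t^{-1}\Lambda(t^{-1},tp_1,t^2p_2,\dots)$ satisfies precisely the displayed equation for $W$. By the uniqueness above, $Z^h_{\widehat{\Sigma\lie}}(t,p_1,p_2,\dots)=t^{-1}\Lambda(t^{-1},tp_1,t^2p_2,\dots)$.

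Finally, the equality $t^{-1}\Lambda(t^{-1},tp_1,\dots)=\Sigma\Sigma_t\Lambda(t^{-1},p_1,\dots)$ is purely formal: from the definitions of $\Sigma$ and $\Sigma_t$ recalled in Appendix \ref{Annexe ind cycl} one has $(\Sigma\Sigma_t F)(p_1,p_2,\dots)=t^{-1}F(tp_1,t^2p_2,\dots)$ for any cycle index series $F$, and applying this to $F=\Lambda(t^{-1},p_1,p_2,\dots)$ gives the middle expression. The main obstacle is the middle step: one must keep scrupulous track of the three twists in play --- the operadic suspension $\Sigma$, the weight twist $\Sigma_t$ on $\pasc$, and the inversion $t\leftrightarrow t^{-1}$ --- together with the signs coming from Koszul duality, and verify that after the change of variables the functional equation for $\Lambda$ matches the hypertree equation on the nose, with no stray scalar or sign.
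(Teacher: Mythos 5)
Your strategy coincides with the paper's: both arguments rest on the specialization of Corollary \ref{coreq} to $\mathcal{S}=\widehat{\Sigma\lie}$ together with $\comm\circ\Sigma\lie=p_1$, on the Koszulness of $\Lambda$ in the form $\Sigma\pasc\circ\Lambda=p_1$ with $\pasc=(1+\comm)\Sigma_t\comm$, and on the substitution $t\mapsto t^{-1}$, $p_i\mapsto t^ip_i$; the paper merely packages this as computing the plethystic inverse $\bigl(Z^h_{\widehat{\Sigma\lie}}\bigr)^{-1}=\comm/(1+\comm\circ tp_1)$ explicitly and checking that $\Sigma\Sigma_t\Lambda(t^{-1},p_1,\ldots)$ inverts it, rather than as a uniqueness-of-fixed-point argument. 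The sign and weight bookkeeping you flag as the main obstacle is exactly the short exponential-product computation the paper carries out, and your formal identity $\Sigma\Sigma_tF=t^{-1}F(tp_1,t^2p_2,\ldots)$ justifying the second equality is correct.
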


\begin{proof} Corollary \ref{coreq} give the following relation, as $\comm \circ \Sigma \lie = p_1$: 
\begin{align*}
\left( Z^h_{\widehat{\Sigma \lie}} \right)^{-1} & = \frac{\comm}{1+ \comm \circ t p_1}, \\
 & = \left( \prod_{k \geq 1} \exp \left( {p_k}/{k} \right) -1 \right) \times \prod_{k \geq 1} \exp \left(- {t^k p_k}/{k} \right).
\end{align*}
Now, as $\Lambda$ is Koszul, it satisfies $\Sigma \pasc \circ \Lambda = p_1$, which gives the relation: 
\begin{align*}
t p_1 & = \prod_{k \geq 1} \exp \left(- \frac{p_k}{k} \circ \Lambda\left(t,p_1, \ldots, p_i, \ldots\right) \right) \\ & \times \prod_{k \geq 1} \exp \left( \frac{t^k}{k} p_k \circ \Lambda\left(t,p_1, \ldots, p_i, \ldots\right) \right) -1.
\end{align*}
Hence, substituting $t$ by $t^{-1}$ in this expression, as $1= t \times t^{-1}$, we obtain: 
\begin{align*}
\frac{p_1}{t} & = \prod_{k \geq 1} \exp \left(- \frac{t^k}{k} p_k \circ t^{-1}\Lambda\left(t^{-1},p_1, \ldots, \right) \right) \\ & \left(\prod_{k \geq 1} \exp \left( \frac{ p_k}{ k} \circ t^{-1} \Lambda\left(t^{-1},p_1, \ldots, \right) \right) -1\right) \\
& =\left( Z^h_{\widehat{\Sigma \lie}} \right)^{-1} \circ t^{-1} \Lambda\left(t^{-1},p_1, \ldots, p_i, \ldots\right).
\end{align*}

Composing by $t p_1$, we obtain that $\Sigma \Sigma_t \Lambda\left(t^{-1}, p_1, \ldots\right)$ is the inverse of $\left( Z^h_{\widehat{\Sigma \lie}} \right)^{-1}$ for the plethystic substitution.
\end{proof}

\section{Generalizations: Bi-decorated hypertrees}

	\subsection{Definitions of bi-decorated hypertrees and rooted or edge-pointed variants of them}

In this part, we generalize the decoration of edges studied previously to the decoration of edges and neighbourhood of vertices.

\begin{defi}
Given two species or linear species $\mathcal{S}_e$ and $\mathcal{S}_v$, a \emph{bi-decorated (rooted) hypertree} is obtained from a $\mathcal{S}_e$-edge-decorated (rooted) hypertree by choosing for every vertex $v$ of $H$ an element of $\mathcal{S}_v\left(E_v\right)$, where $E_v$ is the set of edges containing $v$.

A \emph{skew-bi-decorated rooted hypertree} is obtained from a $\mathcal{S}_e$-edge-decorated rooted hypertree $H$ by choosing for every vertex $v$ of $H$ an element of $\mathcal{S}'_v\left(E_v\right)$, where $E_v$ is the set of edges containing $v$.
\end{defi}

Let us remark that when $\mathcal{S}_v$ is the species $\comm$, bi-decorated rooted hypertrees and skew-bi-decorated rooted hypertrees are the same type of hypertrees: edge-decorated rooted hypertrees.

The map which associates to a finite set $I$ the set of bi-decorated (resp. bi-decorated rooted) hypertrees on $I$ is a species, called the $\left(\mathcal{S}_e, \mathcal{S}_v\right)$-bi-decorated (resp. bi-decorated rooted) hypertrees species and denoted by $\hsb$ (resp. $\hsbp$). The map which associates to a finite set $I$ the set of skew-bi-decorated rooted hypertrees on $I$ is a species, called the $\left(\mathcal{S}_e, \mathcal{S}_v\right)$-skew-bi-decorated rooted hypertrees species and denoted by $\hsbtp$.

\begin{exple} On the right side of figure \ref{bdex} is represented an example of bi-decorated hypertree with the neighbourhood of vertices decorated by the species of non-empty lists $\assoc$ and edges decorated by the species of non-empty pointed sets $\perm$. The pointed vertex of each edge is represented with a star $*$ next to it. The order of edges around a vertex is given by the numbers around the vertex, next to the edges. For instance, around the vertex $9$, the edge $\{8,9\}$ is the first in the list, $\{10,9\}$ is the second one and $\{11,9\}$ is the third and last one. 

On the left side of figure \ref{bdex}, we have represented an example of bi-decorated rooted hypertree, with edges decorated by the species of cycle and the neighbourhood of vertices decorated by the species of non-empty pointed sets $\perm$. When the vertex is in only one edge, this edge is the pointed edge, otherwise, we put a star $*$ near the pointed edge of each vertex. For instance, in the neighbourhood of the vertex $15$, the pointed edge is the edge $\{12,15\}$, which is also pointed in the neighbourhood of the vertex $12$.
\end{exple}

\begin{figure} 
\centering 
\begin{tikzpicture}[scale=1]
\draw [black, fill=gray!40] (6,2) -- (6,3) -- (7,3) -- (7,2) -- (6,2);
\draw[black, fill=gray!40] (7,0) -- (8,0) -- (7,1) -- (7,0);
\draw[black, fill=gray!40] (7,1) -- (7,2) -- (8,2) -- (7,1);
\draw (6,0) -- (6,1);
\draw (6,2) -- (6,1);
\draw (8,2) -- (9,2);
\draw (8,2) -- (8,1);
\draw (5,0) -- (6,1);
\draw (6,2) -- (6,1);
\draw (6,0) -- (6,1);
\draw[black, fill=white] (5,0) circle (0.2);
\draw[black, fill=white] (6,0) circle (0.2);
\draw[black, fill=white] (6,1) circle (0.2);
\draw[black, fill=white] (6,2) circle (0.2);
\draw[black, fill=white] (6,3) circle (0.2);
\draw[black, fill=white] (7,0) circle (0.2);
\draw[black, fill=white] (7,1) circle (0.2);
\draw[black, fill=white] (7,2) circle (0.2);
\draw[black, fill=white] (7,3) circle (0.2);
\draw[black, fill=white] (8,0) circle (0.2);
\draw[black, fill=white] (8,1) circle (0.2);
\draw[black, fill=white] (8,2) circle (0.2);
\draw[black, fill=white] (9,2) circle (0.2);
\draw (5,0) node{$11$};
\draw (5.1,0.3) node{$_1$};
\draw (6,1) node{$9$};
\draw (5.7,0.9) node{$*$};
\draw (6.1,0.3) node{$_1$};
\draw (5.9,0.3) node{$*$};
\draw (6.1,1.3) node{$*$};
\draw (5.9,0.7) node{$_3$};
\draw (6.1,0.7) node{$_2$};
\draw (5.9,1.3) node{$_1$};
\draw (6,0) node{$10$};
\draw (6,2) node{$8$};
\draw (6.3,2.2) node{$_1$};
\draw (6.1,1.7) node{$_2$};
\draw (6,3) node{$12$};
\draw (6.3,2.8) node{$_1$};
\draw (6.1,2.7) node{$*$};
\draw (7,0) node{$4$};
\draw (7.15,0.3) node{$_1$};
\draw (7.3,0.15) node{$*$};
\draw (7,1) node{$3$};
\draw (7.3,1.45) node{$_2$};
\draw (7.15,1.25) node{$*$};
\draw (7.15,0.6) node{$_1$};
\draw (7,2) node{$1$};
\draw (6.7,2.2) node{$_2$};
\draw (7.3,1.85) node{$_1$};
\draw (7,3) node{$13$};
\draw (6.7,2.8) node{$_1$};
\draw (8,0) node{$5$};
\draw (7.7,0.15) node{$_1$};
\draw (8,1) node{$6$};
\draw (8.1,1.3) node{$*$};
\draw (7.9,1.3) node{$_1$};
\draw (8,2) node{$2$};
\draw (7.7,1.85) node{$_1$};
\draw (8.3,2.1) node{$_2$};
\draw (8.1,1.7) node{$_3$};
\draw (9,2) node{$7$};
\draw (8.7,2.1) node{$*$};
\draw (8.7,1.9) node{$_1$};

\draw[black, fill=gray!40] (2,0) -- (-2,1) -- (-1,1) -- (2,0);
\draw[black, fill=gray!40] (2,0) -- (0,1) -- (1,1) -- (2,1) -- (2,0);
\draw[black, fill=gray!40] (2,0) -- (3,1) -- (4,1) -- (2,0);
\draw[black, fill=gray!40] (1,2) -- (0,3) -- (1,3) -- (2,3) -- (3,3) -- (1,2);
\draw (-1,1) -- (-1,2);
\draw (2,1) -- (1,2);
\draw (2,1) -- (3,2);

\draw[black, fill=white] (2,0) circle (0.3);
\draw[black, fill=white] (2,0) circle (0.2);
\draw[black, fill=white] (-2,1) circle (0.2);
\draw[black, fill=white] (-1,1) circle (0.2);
\draw[black, fill=white] (-1,2) circle (0.2);
\draw[black, fill=white] (0,1) circle (0.2);
\draw[black, fill=white] (1,1) circle (0.2);
\draw[black, fill=white] (3,1) circle (0.2);
\draw[black, fill=white] (2,1) circle (0.2);
\draw[black, fill=white] (4,1) circle (0.2);
\draw[black, fill=white] (1,2) circle (0.2);
\draw[black, fill=white] (3,2) circle (0.2);

\draw[black, fill=white] (0,3) circle (0.2);
\draw[black, fill=white] (1,3) circle (0.2);
\draw[black, fill=white] (2,3) circle (0.2);
\draw[black, fill=white] (3,3) circle (0.2);

\draw (2,0) node{$3$};
\draw (1.75,0.3) node{$*$};
\draw (-2,1) node{$10$};
\draw (-1,1) node{$9$};
\draw (-1.1,1.3) node{$*$};
\draw (0,1) node{$4$};
\draw (3,1) node{$5$};
\draw (1,1) node{$6$};
\draw (2,1) node{$15$};
\draw (1.8,1.3) node{$*$};
\draw (4,1) node{$1$};
\draw (-1,2) node{$11$};
\draw (3,2) node{$8$};
\draw (1,2) node{$12$};
\draw (1.2,1.7) node{$*$};

\draw (0,3) node{$14$};
\draw (1,3) node{$7$};
\draw (2,3) node{$13$};
\draw (3,3) node{$2$};
\end{tikzpicture}
\caption{\label{bdex} Bi-decorated hypertrees.}
\end{figure}
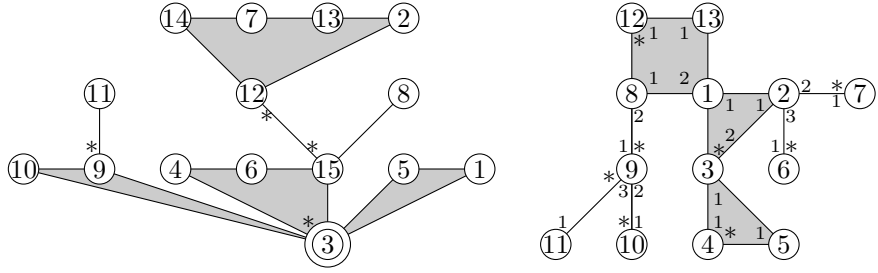

We now give definitions for edge-pointed bi-decorated hypertrees.

\begin{defi}
Given two species $\mathcal{S}_e$ and $\mathcal{S}_v$, a \emph{bi-decorated edge-pointed (rooted) hypertree} (resp. \emph{bi-decorated hollow hypertree}) is obtained from a $\mathcal{S}_e$-edge-decorated edge-pointed (rooted) hypertree (resp. a $\mathcal{S}_e$-edge-decorated hollow hypertree) $H$ by choosing for every vertex $v$ of $H$ an element of $\mathcal{S}_v\left(E_v\right)$, where $E_v$ is the set of edges containing $v$.

We can give an alternative definition. Given a vertex $v$, there is one edge containing $v$ which is the nearest from the pointed or hollow edge of $H$: let us call it the \emph{branch} of $v$. Then, a \emph{bi-decorated edge-pointed (rooted) hypertree} (resp. \emph{bi-decorated hollow hypertree}) is obtained from the hypertree $H$ by choosing for every vertex $v$ of $H$ an element of $\mathcal{S}'_v\left(E^l_v\right)$, where $E^l_v$ is the set of edges containing $v$ different from the branch.
\end{defi}

The map which associates to a finite set $I$ the set of bi-decorated edge-pointed (resp. edge-pointed rooted, resp. hollow) hypertrees on $I$ is a species, called the $\left(\mathcal{S}_e, \mathcal{S}_v\right)$-edge-decorated edge-pointed (resp. edge-pointed rooted, resp. hollow) hypertrees species and denoted by $\hsba$ (resp. $\hsbpa$, resp. $\hsbc$).

\begin{exple} We decorate the edge-pointed rooted hypertree on the left side of the figure \ref{bdrex} by the species of cycles on edges and around vertices. This give the planar hypertree on the right side of the figure \ref{bdrex} where the vertices and the edges are ordered in lists according to their place in the plane. The leftmost edge attached to the root is the pointed edge. For every vertex different from the root, the edge coming from the root to the vertex is the branch of the vertex and the other edges containing the vertex form a list.
\end{exple}

\begin{figure} 
\centering 
\begin{tikzpicture}[scale=1]
\draw[black, fill=gray!40] (0,1) -- (0,0) -- (1,0) -- (0,1);
\draw[black, fill=gray!40] (1,0) -- (1,1) -- (2,1) -- (2,0) -- (1,0);
\draw[black, fill=gray!40] (2,1) -- (2,2) -- (3,2) -- (2,1);
\draw[black, fill=gray!40] (2.1,1.2) -- (2.1,1.9) -- (2.8,1.9) -- (2.1,1.2);
\draw[black, fill=gray!40] (3,1) -- (4,1) -- (4,0) -- (3,1);
\draw (0,2) -- (0,1);
\draw (1,2) -- (2,2);
\draw (2,1) -- (3,1);
\draw (3,0) -- (3,1);
\draw (3,2) -- (4,2);

\draw[black, fill=white] (2,1) circle (0.3);
\draw[black, fill=white] (0,0) circle (0.2);
\draw[black, fill=white] (1,0) circle (0.2);
\draw[black, fill=white] (2,0) circle (0.2);
\draw[black, fill=white] (3,0) circle (0.2);
\draw[black, fill=white] (4,0) circle (0.2);
\draw[black, fill=white] (0,1) circle (0.2);
\draw[black, fill=white] (1,1) circle (0.2);
\draw[black, fill=white] (2,1) circle (0.2);
\draw[black, fill=white] (3,1) circle (0.2);
\draw[black, fill=white] (4,1) circle (0.2);
\draw[black, fill=white] (0,2) circle (0.2);
\draw[black, fill=white] (1,2) circle (0.2);
\draw[black, fill=white] (2,2) circle (0.2);
\draw[black, fill=white] (3,2) circle (0.2);
\draw[black, fill=white] (4,2) circle (0.2);

\draw (0,2) node{$12$};
\draw (0,1) node{$10$};
\draw (0,0) node{$11$};
\draw (1,2) node{$8$};
\draw (1,1) node{$5$};
\draw (1,0) node{$6$};
\draw (2,2) node{$2$};
\draw (2,1) node{$1$};
\draw (2,0) node{$7$};
\draw (3,2) node{$3$};
\draw (3,1) node{$4$};
\draw (3,0) node{$15$};
\draw (4,2) node{$9$};
\draw (4,1) node{$13$};
\draw (4,0) node{$14$};

\draw[black, fill=gray!40] (7,0) -- (5,1) -- (6,1) -- (7,0);
\draw[black, fill=gray!40] (7,0) -- (9,1) -- (11,1) -- (7,0);
\draw[black, fill=gray!40] (7,1) -- (7,2) -- (8,2) -- (7,1);
\draw[black, fill=gray!40] (10,1) -- (10,2) -- (11,2) -- (10,1);
\draw (7,0) -- (7,1);
\draw (5,1) -- (5,2);
\draw (6,1) -- (6,2);
\draw (7,1) -- (9,2);
\draw (10,2) -- (10,3);

\draw[black, fill=white] (7,0) circle (0.3);
\draw[black, fill=white] (7,0) circle (0.2);
\draw[black, fill=white] (5,1) circle (0.2);
\draw[black, fill=white] (6,1) circle (0.2);
\draw[black, fill=white] (7,1) circle (0.2);
\draw[black, fill=white] (9,1) circle (0.2);
\draw[black, fill=white] (10,1) circle (0.2);
\draw[black, fill=white] (11,1) circle (0.2);
\draw[black, fill=white] (5,2) circle (0.2);
\draw[black, fill=white] (6,2) circle (0.2);
\draw[black, fill=white] (7,2) circle (0.2);
\draw[black, fill=white] (8,2) circle (0.2);
\draw[black, fill=white] (9,2) circle (0.2);
\draw[black, fill=white] (10,2) circle (0.2);
\draw[black, fill=white] (11,2) circle (0.2);
\draw[black, fill=white] (10,3) circle (0.2);

\draw (10,3) node{$12$};
\draw (10,2) node{$10$};
\draw (11,2) node{$11$};
\draw (5,2) node{$8$};
\draw (9,1) node{$5$};
\draw (10,1) node{$6$};
\draw (5,1) node{$2$};
\draw (7,0) node{$1$};
\draw (11,1) node{$7$};
\draw (6,1) node{$3$};
\draw (7,1) node{$4$};
\draw (9,2) node{$15$};
\draw (6,2) node{$9$};
\draw (7,2) node{$13$};
\draw (8,2) node{$14$};

\end{tikzpicture}
\caption{\label{bdrex} Bi-decorated rooted hypertrees.}
\end{figure}
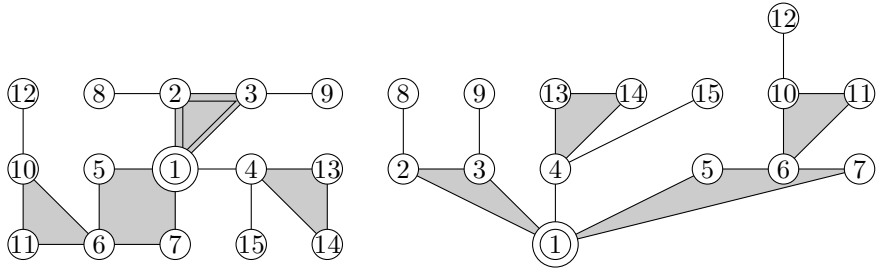

	\subsection{Relations}

		\subsubsection{Dissymetry principle}

We use the dissymetry principle of Proposition \ref{principe de dissymétrie}. As the decoration around vertices is defined similarly for all types of hypertrees, the dissymetry principle is still true for bi-decorated hypertrees.

\begin{prop}[Dissymmetry principle for bi-decorated hypertrees] \label{pdysbi} Given two species $\mathcal{S}_e$ and $\mathcal{S}_v$, the following relation holds: 
\begin{equation}
\hsb+\hsbpa=\hsbp+\hsba.
\end{equation}
\end{prop}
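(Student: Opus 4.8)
The plan is to obtain the identity by exactly the mechanism that produced the edge-decorated dissymmetry principle~\eqref{relpd}: the dissymmetry bijection for plain hypertrees is compatible with \emph{any} decoration of the underlying hypergraph, and in particular with a simultaneous decoration of edges and of neighbourhoods of vertices.

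First I would recall the bijection realising Proposition~\ref{prindi}, i.e.\ $\h+\hpa=\hp+\ha$. It is built, as usual, from a relabelling-equivariant choice of \emph{canonical center} of the hypertree (a distinguished vertex, a distinguished edge, or an incident vertex--edge pair), used to compare the four pointed variants by telescoping. The crucial feature is that this bijection never modifies the underlying hypergraph $(V,E)$: it only relocates the distinguished vertex and/or edge. Consequently, for every edge $e$ the vertex set $V_e$ is unchanged, and for every vertex $v$ the set $E_v$ of edges containing $v$ is unchanged. This is exactly the observation already used, for edges alone, in the remark preceding~\eqref{relpd} when one passes from Proposition~\ref{prindi} to the edge-decorated statement.

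Now I would combine both decorations. A bi-decorated hypertree of one of the four types $\hsb,\hsbp,\hsba,\hsbpa$ is, by definition, a plain hypertree of that type together with a family $(\tau_e)_{e\in E}$ with $\tau_e\in\mathcal{S}_e(V_e)$ and a family $(\sigma_v)_{v\in V}$ with $\sigma_v\in\mathcal{S}_v(E_v)$. Because the dissymmetry bijection leaves every $V_e$ and every $E_v$ fixed, both families transport verbatim under it. This yields a map between bi-decorated structures; it is a bijection because the plain one is, and it commutes with relabellings since $\mathcal{S}_e$ and $\mathcal{S}_v$ are functors, so a bijection of label sets acts compatibly on $\mathcal{S}_e(V_e)$, on $\mathcal{S}_v(E_v)$, and on their images. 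Summing the four terms exactly as in Proposition~\ref{prindi} then gives the isomorphism of species
\[
\hsb+\hsbpa = \hsbp+\hsba ,
\]
which is the asserted identity.

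The only step that needs care is the incidence-preservation claim for the dissymmetry bijection, namely that selecting a center and re-pointing leaves $(V,E)$, hence all the $V_e$ and $E_v$, untouched; but this is precisely the compatibility already invoked to deduce~\eqref{relpd} from Proposition~\ref{prindi}, now applied to neighbourhoods of vertices in addition to edges. Beyond spelling this out, no new combinatorial input is required, so I do not expect any serious obstacle.
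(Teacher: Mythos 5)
Your proof is correct and follows essentially the same route as the paper, which simply observes that the dissymmetry bijection of Proposition \ref{prindi} leaves the underlying hypergraph unchanged, so that both the edge decorations and the vertex-neighbourhood decorations transport along it exactly as in Proposition \ref{principe de dissym�trie}. You merely spell out in more detail the incidence-preservation point that the paper states in one line.
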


		\subsubsection{Functional equations}
		
The previous species are linked by the following proposition:

\begin{prop} \label{décompbi} Let consider the species $\mathcal{S}_e$, such that $\mathcal{S}_e\left(\emptyset\right) = \emptyset$ and $\lvert \mathcal{S}_e\left(\{1\}\right)\rvert = 0$, and $\mathcal{S}_v$, such that $\mathcal{S}_v\left(\emptyset\right) = \emptyset$ and $\lvert \mathcal{S}_v\left(\{1\}\right) \rvert = 1$. The species $\hsb$, $\hsbp$, $\hsbtp$, $\hsba$, $\hsbpa$ and $\hsbc$ satisfy: 
\begin{align}
t \hsbp & = X + X \times \left( \mathcal{S}_v \circ t \hsbc \right), \\
t \hsbtp & = X + X \times \left(\left(\mathcal{S}'_v -1\right) \circ t \hsbc \right) \label{eq sp},\\
\hsbc & =\mathcal{S}_e' \circ t \hsbtp \label{eq c},\\
\hsba &=\mathcal{S}_e  \circ t \hsbtp, \\
\hsbpa & = \hsbc \times t \hsbtp.
\end{align}
\end{prop}

\begin{proof}
\begin{itemize}
\item The case of only one vertex has already been established in Proposition \ref{décomp}. Otherwise, we separate the label of the root: it gives $X$. It remains a hypertree with a gap contained in different edges, at least one. By definition, those edges has an $\mathcal{S}_v$-structure. Forgetting this structure,we obtain a non-empty set of hollow hypertree with edges decorated by $\mathcal{S}_e$ and vertices decorated by $\mathcal{S}_v$.

\item This case only differs from the preceding case by the structure around the root, which is a $\mathcal{S}'_v$-structure and not a $\mathcal{S}_v$-structure.

\item The third relation is obtained by pointing the vertices in the hollow edge and breaking it: we obtain a non-empty forest of rooted edge-decorated hypertrees. The set of roots is a $\mathcal{S}'_e$-structure and induces this structure on the set of trees: we obtain a $\mathcal{S}'_e$-structure in which all elements are rooted edge-decorated hypertrees. As this operation is reversible and does not depend on the labels of the hollow hypertree, this is an isomorphism of species.

\item The fourth relation is obtained by pointing the vertices in the pointed edge and breaking it: we obtain a non-empty forest of rooted edge-decorated hypertrees. The set of roots is a $\mathcal{S}_e$-structure and induces this structure on the set of trees: we obtain a $\mathcal{S}_e$-structure in which all elements are rooted edge-decorated hypertrees. As this operation is reversible and does not depend on the labels of the hollow hypertree, this is an isomorphism of species.

\item The last equation is obtained by decorating around the vertices of the two parts of the last equality of Proposition \ref{décomp}.\qedhere
\end{itemize}
\end{proof}

\begin{cor} 
Using the equations \eqref{eq sp} and \eqref{eq c} of Proposition \ref{décompbi}, we obtain: 
\begin{align}
\hsbc & =\mathcal{S}_e' \circ \left( X + X \times \left(\mathcal{S}'_v -1\right) \circ t \hsbc \right), \\
t \hsbtp & = X + X \times \left(\mathcal{S}'_v -1\right) \circ t \mathcal{S}_e'  \circ t \hsbtp.
\end{align}
\end{cor}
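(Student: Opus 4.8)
The plan is to derive both identities by straightforward mutual substitution of the two functional equations \eqref{eq sp} and \eqref{eq c} of the previous proposition, in exactly the same manner as Corollary~\ref{coreq} is obtained in the edge-decorated setting; the weight $t$, which records the number of edges, simply passes through the plethystic substitutions involved and needs no separate bookkeeping.

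First I would establish the expression for $\hsbc$. Equation \eqref{eq c} reads $\hsbc = \mathcal{S}_e' \circ t\,\hsbtp$, so it suffices to replace $t\,\hsbtp$ by the right-hand side of \eqref{eq sp}, namely $X + X \times \bigl((\mathcal{S}'_v - 1) \circ t\,\hsbc\bigr)$. This yields $\hsbc = \mathcal{S}_e' \circ \bigl(X + X \times (\mathcal{S}'_v - 1) \circ t\,\hsbc\bigr)$, which is the first claimed relation. Symmetrically, for the fixed-point equation satisfied by $t\,\hsbtp$, I would start from \eqref{eq sp} and substitute $\hsbc$ using \eqref{eq c}: since $t\,\hsbc = t\,(\mathcal{S}_e' \circ t\,\hsbtp)$, equation \eqref{eq sp} becomes $t\,\hsbtp = X + X \times (\mathcal{S}'_v - 1) \circ t\,\mathcal{S}_e' \circ t\,\hsbtp$, as stated.

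The only point worth spelling out is that these remain isomorphisms of (weighted) species, not merely equalities of generating or cycle index series: both \eqref{eq sp} and \eqref{eq c} were proved in the previous proposition to be species isomorphisms, and composing species isomorphisms through plethystic substitution again gives a species isomorphism, so no new combinatorial construction is required. There is thus no real obstacle here; the corollary is a purely formal consequence of the two functional equations, and I would present it in essentially one line per identity.
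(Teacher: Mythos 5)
Your proposal is correct and is exactly the paper's (implicit) argument: the corollary is obtained by mutually substituting equations \eqref{eq sp} and \eqref{eq c} into one another, precisely as Corollary \ref{coreq} is obtained from Proposition \ref{d�comp} in the edge-decorated case. The paper does not even write out a proof, treating the substitution as immediate, so nothing is missing from your version.
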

	
	\subsection{Friendly cases of Bi-decorated hypertrees and link with the hypertree poset}

In the article \cite{mar1}, we link the character of the action of the symmetric group on Whitney homology of the hypertree poset with the symmetric function $\operatorname{HAL}$ defined by F. Chapoton in his article \cite{ChHyp}. We now give a combinatorial interpretation of $\operatorname{HAL}$ in terms of bi-decorated hypertrees.

In this section, in order to keep the notations of \cite{ChHyp}, we will use the exponents a,p and pa in the same way as the letters e,r and re, to denote respectively edge-pointed, rooted and rooted edge-pointed.

As this series has been inspired by the one for \emph{cyclic hypertrees}, we first study the link between bi-decorated hypertrees and this series.

			\subsubsection{Cyclic hypertrees}

In the article \cite{ChHyp}, \emph{cyclic hypertrees} are defined as hypertrees with, for every vertex $v$, a cyclic order on the edges containing $v$. The associated species is denoted by $\operatorname{HAC}$. This definition corresponds to the one of bi-decorated hypertrees obtained by taking the species $\comm-X$ for $\mathcal{S}_e$ and the cycle species $\operatorname{Cycle}$ for $\mathcal{S}_v$.
		
		As for usual hypertrees, we consider rooted cyclic hypertrees, edge-pointed cyclic hypertrees and edge-pointed rooted cyclic hypertrees and respectively write $\operatorname{HAC}^p$, $\operatorname{HAC}^a$ and $\operatorname{HAC}^{pa}$ for the associated species. Considering the definitions, $\operatorname{HAC}^a$ and $\hsya$, and also $\operatorname{HAC}^{pa}$ and $\hsypa$ are isomorphic because in these definitions, pointing and decoration commute. Let us compare the relations to find once more a link between the different types of hypertrees.

On the one hand, the species defined by F. Chapoton satisfy the following relations: 
\begin{equation*}
\operatorname{HAC}^{pa} = t^{-1}{X} \times \left( \assoc \circ t \operatorname{YC} \right),
\end{equation*}
with $\operatorname{YC}$ defined by: 
\begin{align*}
\operatorname{YC} & = \comm \circ\left(X + t \operatorname{HAC}^{pa}\right), \\
\operatorname{HAC}^a & =\left(\comm - X\right) \circ\left(X + t \operatorname{HAC}^{pa}\right), \\
\operatorname{HAC}^p & = t^{-1} X \cycle \circ t \operatorname{YC}, \\
\operatorname{HAC} & = \operatorname{HAC}^a + \operatorname{HAC}^p -\operatorname{HAC}^{pa}.
\end{align*}

Whereas, on the other hand, the bi-decorated hypertrees satisfy: 
\begin{align*}
t \hsyp & = X + X \times \left( \cycle \circ t \hsyc \right), \\
t \hsytp & = X + X \times \left( \assoc \circ t \hsyc \right), \\
\hsyc & = \comm \circ t \hsytp, \\
\hsya &=\left(\comm - X\right) \circ t \hsytp, \\
\hsypa & = \hsyc \times t \hsytp, \\
\hsy & = \hsyp + \hsya - \hsypa.
\end{align*}

Comparing these equations, we obtain the following relations: 
\begin{align*}
\operatorname{HAC}^{a} & =\hsya, \\
X + t \operatorname{HAC}^{pa} & = t \hsytp, \\
\operatorname{YC} & = \hsyc, \\
\operatorname{HAC}^{pa} & = \hsypa, \\
\operatorname{HAC}^p & = t^{-1}{X} + \hsyp.
\end{align*}

Let us now study the case of the $\operatorname{HAL}$ series.

				\subsubsection{The hypertree poset}
				
		The series $\operatorname{HAL}$, $\operatorname{HAL}^p$, $\operatorname{HAL}^a$ and $\operatorname{HAL}^{pa}$ defined in \cite{ChHyp} satisfy the following relations: 
		
\begin{align*}
t \operatorname{HAL}^{pa} & =  p_1 \times \Sigma \assoc \circ t \comm \circ\left(p_1 +\left(-t\right) \operatorname{HAL}^{pa}\right), \\
t \operatorname{HAL}^p & =  p_1 \times \Sigma \lie \circ t \comm \circ\left(p_1 +\left(-t\right) \operatorname{HAL}^{pa}\right), \\
\operatorname{HAL}^a & =\left(\comm - p_1\right) \circ\left(p_1 + (-t) \operatorname{HAL}^{pa}\right), \\
\operatorname{HAL} & = \operatorname{HAL}^a + \operatorname{HAL}^p -\operatorname{HAL}^{pa}.
\end{align*}

We link these series with the cycle index series of bi-decorated hypertrees obtained by taking the species $\comm - X$ for $\mathcal{S}_e$ and the species $\Sigma \lie $ for $\mathcal{S}_v$.

For this choice of decorations, the cycle index series of bi-decorated hypertrees satisfy: 
\begin{align*}
t \hszp & = X + X \times \Sigma \lie \circ t \comm \circ t \hsztp, \\
t \hsztp & = X + X \times \left( -\Sigma \assoc \right) \circ t \comm \circ t \hsztp, \\
\hsza &=\left(\comm - X\right) \circ t \hsztp, \\
\hszpa & = \left( \comm \circ t \hsztp \right) \times t \hsztp, \\
\hsz & = \hszp + \hsza - \hszpa.
\end{align*}

Comparing these equations, we obtain the following relations: 
\begin{align*}
p_1 +\left(-t\right) \operatorname{HAL}^{pa} & = t \hsztp, \\
\operatorname{HAL}^a & = \hsza, \\
\operatorname{HAL}^{pa} & = \hszpa, \\
\operatorname{HAL}^p & = t^{-1} p_1 + \hszp, \\
\operatorname{HAL} & = t^{-1} p_1 + \hsz.
\end{align*}

Therefore, the character of the action of the symmetric group on the Whitney homology of the hypertree poset is the same as the character of the action of the symmetric group on the set of hypertrees whose vertices have their neighbourhood decorated by $\Sigma \lie$.

\section{Reminder on cycle index series}	

\label{Annexe ind cycl}
	
	Let ${F}$ be a species. We can associate a formal power series to it: its cycle index series. The reader can consult the book \cite{BLL} for a reference on this subject. This formal power series is a symmetric function defined as follows:
	
	\begin{defi}
	The \textbf{cycle index series} of a species $F$ is the formal power series in an infinite number of variables $\left(p_1, p_2, p_3, \ldots \right)$ defined by: 
	\begin{equation*}
	\textbf{Z}_F\left(p_1, p_2, p_3, \ldots \right)= \sum_{n \geq 0} \frac{1}{n!} \left( \sum_{\sigma \in \mathfrak{S}_n} \# F^\sigma  \prod_{i \geq 1}p_i^{\sigma_i} \right),
	\end{equation*}
	where $F^\sigma$ stands for the set of $F$-structures fixed under the action of $\sigma$ and where $\sigma_i$ is the number of cycles of length $i$ in the decomposition of $\sigma$ into disjoint cycles.
		\end{defi}

For instance, the cycle index series of the species $X$ of singletons is $p_1$.
		
		We can define the following operations on cycle index series.
		
		\begin{defi}
		The operations $+$ and $\times$ on cycle index series are the same as on formal series.

		For $f=f\left(p_1, p_2, \ldots \right)$ and $g=g\left(p_1, p_2, \ldots \right)$, plethystic substitution $f \circ g$ is defined by: 
		\begin{equation*}
		f \circ g\left(p_1, p_2, \ldots \right) = f\left(g\left(p_1, p_2, p_3, \ldots\right), g\left(p_2, p_4, p_6, \ldots\right), \ldots, g\left(p_{k}, p_{2k}, p_{3k}, \ldots\right), \ldots\right).
		\end{equation*}
		It is left-linear.
		\end{defi}

		These operations satisfy: 

		\begin{prop}
		Let $F$ and $G$ be two species. Their cycle index series satisfy: 
		\begin{equation*}
		\begin{array}{rlrl}
		\textbf{Z}_{F+G}&=\textbf{Z}_F+\textbf{Z}_G, & \textbf{Z}_{F \times G}&=\textbf{Z}_F \times \textbf{Z}_G, \\
		\textbf{Z}_{F\circ G}&=\textbf{Z}_F \circ \textbf{Z}_G, & \textbf{Z}_{F'}&= \frac{\partial \textbf{Z}_{F} }{\partial p_1}.
		\end{array}
		\end{equation*}
		\end{prop}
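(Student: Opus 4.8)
The plan is to read all four identities off the definition, grouping permutations by cycle type so that $\textbf{Z}_F = \sum_{\lambda} \operatorname{fix}_F(\lambda)\, p_\lambda/z_\lambda$, where $\operatorname{fix}_F(\lambda)$ is the number of $F$-structures on an $n$-element set fixed by a permutation of cycle type $\lambda \vdash n$, and $z_\lambda = \prod_i i^{m_i} m_i!$ with $m_i$ the number of parts of $\lambda$ equal to $i$. These are the classical formulas of \cite[Chapter~4]{BLL}; what follows only sketches the mechanism, and the shortest correct write-up is simply to cite that reference.

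Additivity is immediate, since $(F+G)^\sigma = F^\sigma \sqcup G^\sigma$ forces $\#(F+G)^\sigma = \#F^\sigma + \#G^\sigma$ and the defining sum splits. For the product (read as the Cauchy product $(F\times G)(I) = \bigsqcup_{I = J \sqcup K} F(J)\times G(K)$, which is the one actually used throughout \S1) I would observe that such a structure on $\{1,\dots,n\}$ is fixed by $\sigma$ exactly when $\sigma$ stabilises both $J$ and $K$ and the two components are fixed by $\sigma|_J$ and $\sigma|_K$ respectively. Summing over $\sigma$ and over all admissible splittings, and using that a permutation of cycle type $\lambda$ restricts to a $\sigma$-stable subset in $\prod_i\binom{m_i(\lambda)}{m_i(\mu)} = z_\lambda/(z_\mu z_\nu)$ ways for each decomposition $\lambda = \mu\cup\nu$, one gets $\operatorname{fix}_{F\times G}(\lambda) = \sum_{\mu\cup\nu=\lambda}\frac{z_\lambda}{z_\mu z_\nu}\operatorname{fix}_F(\mu)\operatorname{fix}_G(\nu)$, which is exactly the coefficient form of $\textbf{Z}_{F\times G} = \textbf{Z}_F\cdot\textbf{Z}_G$.

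The plethysm identity $\textbf{Z}_{F\circ G} = \textbf{Z}_F\circ\textbf{Z}_G$ is where the real work lies, and I expect it to be the main obstacle. Starting from $(F\circ G)(I) = \bigsqcup_{\pi\in\mathcal{P}(I)} F(\pi)\times\prod_{B\in\pi} G(B)$, fix $\sigma$ together with a fixed $(F\circ G)$-structure: then $\sigma$ permutes the blocks of $\pi$, the induced permutation $\bar\sigma$ on the set of blocks fixes the $F$-part, and on each $\bar\sigma$-orbit $\{B,\sigma B,\dots,\sigma^{\ell-1}B\}$ the $G$-structures on the blocks are transported by $\sigma$ from the single $G$-structure on $B$, which must be fixed by $\sigma^{\ell}|_B$. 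The crux is that an orbit of $\ell$ equal-sized blocks on which $\sigma^{\ell}$ has cycle type $\mu$ forces the cycles of $\sigma$ inside that orbit to have lengths $\ell$ times the parts of $\mu$; weighting each such orbit by $\operatorname{fix}_G(\mu)$ and summing shows that every $\ell$-cycle of $\bar\sigma$ contributes $\textbf{Z}_G(p_\ell,p_{2\ell},p_{3\ell},\dots)$, that is, exactly the substitution rule defining $\textbf{Z}_F\circ\textbf{Z}_G$. Making this bookkeeping airtight — reconciling the $1/n!$ prefactors, the $z_\lambda$ denominators and the orbit counts — is the delicate point; here too one may invoke \cite[\S4.3]{BLL}.

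Finally, for the derivative, $F'(I) = F(I\sqcup\{\bullet\})$ gives $\operatorname{fix}_{F'}(\nu) = \operatorname{fix}_F(\mu)$, where $\mu$ is obtained from $\nu$ by adjoining one extra part of size $1$ (accounting for the fixed point $\bullet$). Comparing the coefficient of a monomial $p_1^{\,j}\prod_{i\geq 2} p_i^{m_i}$ on the two sides: on the $\textbf{Z}_{F'}$ side it equals $\operatorname{fix}_F(\mu)/z_\nu$ with $\nu=(1^{j},2^{m_2},\dots)$ and $\mu=(1^{j+1},2^{m_2},\dots)$, while on the $\partial\textbf{Z}_F/\partial p_1$ side it equals $(j+1)\operatorname{fix}_F(\mu)/z_\mu$; since $z_\mu = (j+1)\,z_\nu$ the two agree, which yields $\textbf{Z}_{F'} = \partial\textbf{Z}_F/\partial p_1$ and completes all four cases.
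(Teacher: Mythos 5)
Your sketch is correct and is exactly the standard argument: the paper itself states this proposition without proof in its appendix, deferring to \cite{BLL}, so your write-up (orbit-by-orbit analysis for plethysm, cycle-type bookkeeping with $z_\lambda$ for the product, the extra fixed point $\bullet$ for the derivative) is precisely the proof the cited reference gives. Your decision to read $\times$ as the Cauchy product rather than the Hadamard product literally written in the paper's definition of operations on species is the right call --- it is the only reading under which $\textbf{Z}_{F\times G}=\textbf{Z}_F\times\textbf{Z}_G$ holds with ordinary multiplication of power series, and it is the product actually used throughout the paper's functional equations.
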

				
		Moreover, we define the following operation: 
		
		\begin{defi} The \emph{suspension} $\Sigma_t$ of a cycle index series $f\left(p_1, p_2, p_3, \ldots\right)$ is defined by: 
		\begin{equation*}
		\Sigma_t f = - \frac{1}{t} f\left(-tp_1, -t^2 p_2, -t^3 p_3, \ldots\right). 
		\end{equation*}
		By convention, we will write $\Sigma$ for the suspension in $t=1$.
		\end{defi}

The suspension satisfies: 
\begin{prop} \label{prop suspension} Let $f$ and $g$ be two cycle index series. They satisfy: 
\begin{itemize}
\item $\Sigma \left( f \circ g \right)=\Sigma f \circ \Sigma g$,
\item $\Sigma \left( f \times g \right)= - \Sigma f \times \Sigma g$,
\item $-\Sigma f = f \circ\left(-p_1\right)$.
\end{itemize}
\end{prop}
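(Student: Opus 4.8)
The plan is to prove all three identities directly from the definition, specialized at $t=1$, namely $\Sigma f = -f(-p_1,-p_2,-p_3,\ldots)$, together with the defining formula for plethystic substitution, working in order of increasing difficulty. The only mechanism involved is careful bookkeeping of how the substitution $p_j \mapsto -p_j$ interacts with the three operations; no species-theoretic input is needed, since everything takes place at the level of cycle index series as formal power series in the $p_k$.

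First I would dispose of the third identity $-\Sigma f = f \circ (-p_1)$, which is essentially a reformulation of the definition. Taking $g = -p_1$ in the plethysm formula, the $k$-th argument of $f$ is $g(p_k, p_{2k}, p_{3k}, \ldots) = -p_k$, so $f \circ (-p_1) = f(-p_1, -p_2, -p_3, \ldots) = -\Sigma f$. For the product identity I would use that $p_j \mapsto -p_j$ is a ring homomorphism, hence multiplicative. Writing $\tilde f = f(-p_1,-p_2,\ldots)$, we have $\Sigma f = -\tilde f$ and $\widetilde{fg} = \tilde f\, \tilde g$, whence $\Sigma(f \times g) = -\widetilde{fg} = -\tilde f\, \tilde g = -(-\Sigma f)(-\Sigma g) = -\Sigma f \times \Sigma g$, giving the sign predicted by the statement.

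The substantive step is the composition identity $\Sigma(f \circ g) = \Sigma f \circ \Sigma g$, which I would prove by computing both sides explicitly and matching them. On the left, applying $p_j \mapsto -p_j$ to the $k$-th plethystic slot $g(p_k, p_{2k}, \ldots)$ and then negating gives $\Sigma(f \circ g) = -f(v_1, v_2, \ldots)$ with $v_k = g(-p_k, -p_{2k}, -p_{3k}, \ldots)$. On the right, writing $F = \Sigma f$ and $G = \Sigma g$, the $k$-th argument of $F$ in $F \circ G$ is $G(p_k, p_{2k}, \ldots) = (\Sigma g)(p_k, p_{2k}, \ldots) = -g(-p_k, -p_{2k}, \ldots) = -v_k$; feeding these into $F = \Sigma f$, which replaces each argument $w$ by $-w$ and negates the result, yields $\Sigma f \circ \Sigma g = -f(v_1, v_2, \ldots)$, matching the left-hand computation.

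The crux, and the only place a pitfall can arise, is verifying that the two sign changes carried by $\Sigma$ on the outer function $f$ and on the inner function $g$ cancel against each other in exactly the right way, while respecting the index shift $p_\ell \mapsto p_{k\ell}$ in the $k$-th slot of the plethysm. The key observation making this transparent is that $-v_k = (\Sigma g)(p_k, p_{2k}, \ldots)$, so the inner negation built into $\Sigma g$ is precisely undone by the outer negation $w \mapsto -w$ built into $\Sigma f$, leaving a single overall sign that reproduces $\Sigma(f \circ g)$. I expect the index-shift compatibility to be the main obstacle only in the sense of requiring care, not depth: it holds because $-p_{k\ell}$ is the image of $p_\ell$ under $p_j \mapsto -p_j$ composed with the shift, so substituting signs before plethysm agrees with substituting into each slot separately.
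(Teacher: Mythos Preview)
Your argument is correct: all three identities follow by direct computation from the definition $\Sigma f = -f(-p_1,-p_2,\ldots)$ and the plethysm formula, and your bookkeeping of the signs in the composition case is accurate. The paper itself does not supply a proof of this proposition --- it appears in the appendix as a recalled fact without argument --- so there is no alternative approach to compare against; your direct verification is exactly what is required.
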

\bibliographystyle{alpha}
\bibliography{bibli}

\end{document}